\newtheorem{theorem}{Theorem}[section]
\newtheorem{definition}[theorem]{Definition}
\newtheorem{lemma}[theorem]{Lemma}
\newtheorem{remark}[theorem]{Remark}
\numberwithin{equation}{section}
\def\NN{{\mathbb{N}}}
\keywords{Fractional semi-linear wave equations,  polynomial growth condition, weak and strong solutions, existence and uniqueness of local and global solutions}
\subjclass[2010]{26A33, 35R11, 35G31, 34A12, 74G20, 74G25}
\begin{document}

\title[Semi-linear fractional in time wave equations]{Well-posedness results for a class of semi-linear super-diffusive equations}

\author{Edgardo Alvarez}
\email{ealvareze@uninorte.edu.co}
\address{E. Alvarez, Departamento de Matem\'aticas y Estadistica Universidad
del Norte, Barranquilla (COLOMBIA)}

\author{Ciprian G. Gal}
\email{cgal@fiu.edu}
\address{C. G. Gal, Department of Mathematics, Florida International
University, Miami, FL 33199 (USA)}

\author{Valentin Keyantuo}
\email{valentin.keyantuo1@upr.edu}
\address{V. Keyantuo, University of Puerto Rico, Rio Piedras Campus,
Department of Mathematics, Faculty of Natural Sciences, 17 University AVE.
STE 1701 San Juan PR 00925-2537 (USA)}

\author{Mahamadi Warma}
\email{mahamadi.warma1@upr.edu, mjwarma@gmail.com}
\address{M. Warma, University of Puerto Rico, Rio Piedras Campus, Department
of Mathematics, Faculty of Natural Sciences, 17 University AVE. STE 1701 San
Juan PR 00925-2537 (USA)}

\thanks{The work of V. Keyantuo and M. Warma is partially supported by the Air
Force Office of Scientific Research under the Award No: FA9550-18-1-0242.}

\thanks{The work E. Alvarez is partially supported by Colciencias under the Award No: 1215-569-33876}

\begin{abstract}
In this paper we investigate the following fractional order in time Cauchy
problem
\begin{equation*}
\begin{cases}
\mathbb{D}_{t}^{\alpha }u(t)+Au(t)=f(u(t)), & 1<\alpha <2, \\
u(0)=u_{0},\,\,\,u^{\prime }(0)=u_{1}. &
\end{cases}%
\end{equation*}%
The fractional in time derivative is taken in the classical Caputo sense. In
the scientific literature such equations are sometimes dubbed as
fractional-in time wave equations or super-diffusive equations. We obtain
results on existence and regularity of local and global weak solutions
assuming that $A$ is a nonnegative self-adjoint operator with compact
resolvent in a Hilbert space and with a nonlinearity $f\in C^{1}({\mathbb{R}}%
)$ that satisfies suitable growth conditions. Further theorems on the
existence of strong solutions are also given in this general context.
\end{abstract}

\maketitle
\tableofcontents

\section{Introduction}

Of concern in the present paper is the existence of local and global
solutions for a class of semi-linear \emph{super-diffusive fractional (wave)
equations}. More precisely, our aim is to investigate the following
initial-value problem
\begin{equation}
\begin{cases}
\mathbb{D}_{t}^{\alpha }u\left( x,t\right) +Au\left( x,t\right) =f(u\left(
x,t\right) ), & \mbox{ in }\;X\times \left( 0,T\right) , \\
u(\cdot ,0)=u_{0},\;\;\partial _{t}u(\cdot ,0)=u_{1} & \text{ in }X.%
\end{cases}
\label{EQ-NL0}
\end{equation}%
In \eqref{EQ-NL0}, $X$ is a (relatively) compact Hausdorff space, $T>0$ and $%
1<\alpha <2$ are real numbers and $\mathbb{D}_{t}^{\alpha }u$ denotes the
Caputo fractional derivative with respect to $t$, which is defined by
\begin{equation}
\mathbb{D}_{t}^{\alpha }u(x,t):=\frac{1}{\Gamma (2-\alpha )}%
\int_{0}^{t}(t-s)^{1-\alpha }\partial _{s}^{2}u(x,s)ds,\ (x,t)\in X\times\left(
0,T\right).  \label{fra}
\end{equation}
When the function involved in \eqref{fra} is sufficiently smooth, then %
\eqref{fra} is equivalent to the following weaker form\footnote{%
Strictly speaking, the notion of Riemann-Liouville derivative requires a
"lesser" degree of smoothness of the function involved.} (see \cite{Po99};
cf. also \cite{Ba01,KW,KLW2}):
\begin{equation}
\mathbb{D}_{t}^{\alpha }u(x,t)=\frac{1}{\Gamma (2-\alpha )}\partial
_{t}^{2}\int_{0}^{t}(t-s)^{1-\alpha }\Big(u(x,s)-u(x,0)-s\partial _{s}u(x,0)%
\Big)ds.  \label{fra-RL}
\end{equation}
Notice that when $\alpha\to 2^-$, we have that $\mathbb D_t^\alpha\to \partial_t^2$, that is, the classical second order derivative. This can be easily seen from the formulas \eqref{fra} or \eqref{fra-RL} with the interpretation that $\lim_{\beta\to 0^+}g_\beta(t)=\lim_{t\to 0^+}\frac{t^{\beta-1}}{\Gamma(\beta)}=\delta_0$, the Dirac measure concentrated at the point $0$.
Finally, the nonlinearity $f\in C^{1}({\mathbb{R}})$ satisfies some suitable growth
conditions as $\left\vert u\right\vert \rightarrow \infty $.

The theory of fractional differential equations has found applications in
many areas of science and technology and several monographs are devoted to
their study. We mention \cite{Mi-Ro,Po99,SKM} and the recently published
book \cite{GKMR2014}. The interest in these equations lies in the fact that
some nonlocal aspects of phenomena or systems that cannot be captured by the
classical theory of partial differential equations fit well into the new
models. Examples of this are phenomena with memory effects, anomalous
diffusion, problems in rheology, material science and many other areas. The
papers \cite{Ei-Ko04,Go-Ma97,Go-Lu-Ma99,Ma97,Go-Ma00,Ni86} covers many of
these applications. One important feature when dealing with evolution
equations that are fractional in time is that several models of fractional
derivatives are available. Historically, the most important ones are the
Riemann-Liouville and Caputo fractional derivatives. We note that spatial
models involving fractional derivatives are also being actively studied, due
to their suitability for modeling concrete systems on the one hand, and due
to the richness of the mathematical structure involved on the other hand.
Here, stochastic models have received much attention as well. The above
mentioned papers cover some of the important aspects. Both time and space
fractional derivatives have a long history as can be seen in the above
references.

As in the classical area of partial differential equations, linear and
nonlinear models have been studied. The linear models are sometimes good
approximations of the real problems under consideration but as it is well
known, their analysis also provides the mathematical tools needed to study
nonlinear phenomena, especially for semi-linear and quasi-linear equations.

A theory of mild and classical solutions for the classical semi-linear wave
equation ($\alpha =2$) with a Lipschitz nonlinearity is developed in \cite%
{Gor} and \cite{Tr-Webb78} by essentially adapting the techniques of Henry
\cite{H} exploited for semi-linear parabolic equations. In \cite{St}, the
author surveys existence and regularity results for semi-linear wave
equations with a polynomial like nonlinearity for the Laplacian in $\mathbb{R%
}^{d}$. The general existence and uniqueness theory for linear
nonhomogeneous equations associated with (\ref{EQ-NL0}) have been studied in
\cite{KLW2} and \cite{Ki-Ya2}. In \cite{Ki-Ya}, a theory of integral
solutions has been developed for the semi-linear problem (\ref{EQ-NL0}) when $%
A$ is related to the classical Dirichlet Laplacian. In \cite{AEP}, the
authors consider the operator $A=-\Delta _{x},$ $x\in \mathbb{R}^{d},$ in (%
\ref{EQ-NL0}) and find a "critical" exponent in order to deduce the global
existence of integral solutions for small data in low space dimension. The
limiting case $\alpha =2$ corresponding to the abstract wave equation (see,
for example, \cite{Gor,Tr-Webb78}) is used as an inspiration for the
fractional case. However, the range of applicability of the results is much
wider. In fact, well posedness for the linear wave equation fails in $L^{p}(%
\mathbb{R}^{d})$ if $d\geq 2$ and $p\neq 2.$ The classical theory of
strongly continuous semigroups does not apply (see, e.g., \cite[Chapter 8]%
{ABHN01}). There is a substitute theory, namely that of integrated cosine
functions (and for equations of the form \eqref{EQ-NL0}, operator families
suitable for such extensions have been introduced and treated in \cite{KLW2}%
) but we do not consider this approach in the present paper.

We note that a complete study of locally and/or globally defined weak and
strong solutions and their fine regularities in the case $0<\alpha \leq 1$
has been already performed in \cite{GW-F}. Here, the authors have
established some precise and optimal conditions on the nonlinearity in order
to have existence and the precise regularity of local and global weak and
strong solutions. In particular, these results show that case $\alpha =1$
can be recovered in a natural way. We refer the interested reader for an
extensive comparison of our work in \cite{GW-F} with other investigations
for the problem when $0<\alpha \leq 1$, which lies outside the scope of the
present paper. Outside the works of \cite{Ki-Ya} and \cite{AEP}, we remark
that not much seems to be known about semilinear fractional waves in
general. Here we are interested in existence, uniqueness and regularity
results for the semi-linear equation \eqref{EQ-NL0} when $1<\alpha <2$,
under appropriate conditions on the data. In contrast to the works of \cite%
{Ki-Ya}, \cite{AEP}, which only consider the Laplacian for the operator and
a general notion of integral solutions, the main novelties of the present
paper are as follows:

\begin{itemize}
\item Our assumption on the operator $A$ is quite general. Indeed, we let $A$
be a self-adjoint operator in $L^{2}(X)$, that is associated with a bilinear
symmetric and closed form $\mathcal{E}_{A}$, whose domain $D(\mathcal{E}%
_{A}) $ is compactly embedded into $L^{2}(X)$. We further assume that $(%
\mathcal{E}_{A},D(\mathcal{E}_{A}))$ is a Dirichlet space in the broad sense
of \cite{Fuk}. We refer to Section \ref{enr-sec} for further details. Our
framework contains in particular the realization in $L^{2}(\Omega )$ (where $%
\Omega \subset \mathbb{R}^{d}$ is an open set) of any second order elliptic
operator in divergence form with Dirichlet, Neumann, Robin or Wentzell
boundary conditions. It also contains any fractional powers of these
operators and also the realization in $L^{2}(\Omega )$ of the fractional
Laplace operator $(-\Delta )^{s}$ ($0<s<1$) with the zero Dirichlet exterior
condition $u=0$ in $\mathbb{R}^{d}\backslash \Omega ,$ and many other
operators that are not explicitly stated in the paper (see Section \ref{ex}%
). We refer to \cite{SV2,War,War-N,War-In} and their references for more
information on the operator $(-\Delta )^{s}$.

\item We employ a notion of \emph{energy} (weak) solution that is much
stronger than the notion of integral solution devised by \cite{Ki-Ya, AEP}.
Besides, in some cases we prove the existence of strong energy solutions
that have the additional property that they also satisfy the fractional wave
equation pointwise (see also Section \ref{ex-41}, for further comments).
\end{itemize}

Our first main result (Theorem \ref{theo-loc}) states that if $u_{0}\in D(A^{%
\frac{1}{\alpha }})$ (the domain of the fractional $\frac{1}{\alpha }$%
-power), $u_{1}\in L^{2}(X)$ and $f$ satisfies suitable growth assumptions
(see Section \ref{main}), then our system has a unique weak solution on $%
(0,T^{\star })$ for some $T^{\star }>0$. In most cases, these assumptions
allow not only for nonlinearities of polynomial growth (at infinity) but
essentially also for functions without a growth restriction. A critical
value of $\alpha $ will play here an essential role for finding classes of
(unique) weak solutions that satisfy a certain energy identity and that
exhibit finer properties of their solutions. The second main result (Theorem %
\ref{extension}) shows that locally defined weak solutions can be always
extended to a larger interval. Finally, Theorem \ref{theo-glob-sol} gives
some results related to the existence of global weak solutions. In some
cases, the existence of strong solutions (satisfying the equation pointwise
on $X\times \left( 0,T\right) $)\ can be also deduced in a certain range for
$\alpha $ (see Theorem \ref{cor-str}).

The rest of the paper is structured as follows. In Section \ref{sec-main},
firstly we introduce some notations and our general assumptions and
secondly, we give the definition of the Mittag-Leffler functions and their
properties that will be used throughout the paper. In Section \ref{lin-pro}
we study the linear problem where we have obtained some new regularity
results of weak and strong solutions. In Section \ref{main} we investigate the
semi-linear system \eqref{EQ-NL0}. We first introduce our notion of weak
solutions of the considered system in noncritical and critical cases. We
next give our general assumption on the nonlinearity. We conclude the
section by stating the main results of the article regarding the semi-linear
system. In Section \ref{sec-proof-mr} we give the proofs of our main results
in the noncritical case. The proof of the critical case is contained in
Section \ref{sec-proof-mr2}. Some examples of self-adjoint operators that
fit our framework are given in Section \ref{ex}.

\section{Functional framework}

\label{sec-main}

We first introduce some background. Let $Y,Z$ be two Banach spaces endowed
with norms $\left\Vert \cdot \right\Vert _{Y}$ and $\left\Vert \cdot
\right\Vert _{Z}$, respectively. We denote by $Y\hookrightarrow Z$ if $%
Y\subseteq Z$ and there exists a constant $C>0$ such that $\left\Vert
u\right\Vert _{Z}\leq C\left\Vert u\right\Vert _{Y},$ for $u\in Y\subseteq
Z. $ This means that the injection of $Y$ into $Z$ is continuous. In
addition, if the injection is also compact we shall denote it by $Y\overset{c%
}{\hookrightarrow }Z$. By the dual $Y^{\ast }$\ of $Y$, we think of $Y^{\ast
}$ as the set of all (continuous) linear functionals on $Y$. When equipped
with the operator norm $\left\Vert \cdot \right\Vert _{Y^{\ast }}$, $Y^{\ast
}$ is also a Banach space.

\subsection{Energy forms and Markovian semigroups}

\label{enr-sec}

We introduce the notion of Dirichlet form on an $L^{2}$-type space (see \cite%
[Chapter 1]{Fuk}). To this end, let $X$ be a (relatively) compact metric
space and $m$ a Radon measure on $X$ such that supp$\left( m\right) =X$. Let
$L^2(X)=L^{2}(X,m)$ be the real Hilbert space with inner product $\left(
\cdot ,\cdot \right) $ and let $\mathcal{E}_{A}$ with domain $D(\mathcal{E}%
_{A})=:V_{1/2}$ be a bilinear form on $L^{2}\left( X\right)$. We consider $%
L^p(X)=L^{p}(X,m)$ to be the corresponding Banach space for $1\le p\le\infty$%
, with norm $\left\Vert \cdot \right\Vert _{L^{p}\left( X\right) }.$ We
notice that our assumption implies that $m(X)<\infty$.

We recall the following notion of energy forms, cf. \cite[Chapter 1]{Fuk}.

\begin{definition}
\label{Diri-form}The form $\mathcal{E}_{A}$ is said to be a Dirichlet form
if the following conditions hold:

\begin{enumerate}
\item $\mathcal{E}_{A}:V_{1/2}\times V_{1/2}\rightarrow \mathbb{R}$, where
the domain $D\left( \mathcal{E}_{A}\right) =V_{1/2}$ of the form is a dense
linear subspace of $L^{2}(X).$

\item $\mathcal{E}_{A}\left( \cdot,\cdot\right)$ is a symmetric,
non-negative and bilinear form. %$ =\mathcal{E}_{A}\left(
%v,u\right) $, $\mathcal{E}_{A}\left( u+v,w\right) =\mathcal{E}_{A}\left(
%u,w\right) +\mathcal{E}_{A}\left( v,w\right) $, $\mathcal{E}_{A}\left( u,v+w\right) =\mathcal{E}_{A}\left(
%u,v\right) +\mathcal{E}_{A}\left( u,w\right) $, $\mathcal{E}_{A}\left(
%\lambda u,v\right) =\lambda \mathcal{E}_{A}\left( u,v\right) $ and $\mathcal{E}_{A}\left(
%u,u\right) \geq 0$, for all $u,v,w\in V_{1/2}$ and $\lambda\in \mathbb{R}$.

\item Let $\lambda >0$ and define $\mathcal{E}_{A,\lambda }\left( u,v\right)
=\mathcal{E}_{A}\left( u,v\right) +\mathcal{\lambda }\left( u,v\right) ,$
for $u,v\in D\left( \mathcal{E}_{A,\lambda }\right) =V_{1/2}$. The form $%
\mathcal{E}_{A}$ is said to be closed, if $u_{n}\in V_{1/2}$ with
\begin{equation*}
\mathcal{E}_{A,\lambda }\left( u_{n}-u_{m},u_{n}-u_{m}\right) \rightarrow 0%
\text{ as }n,m\rightarrow \infty ,
\end{equation*}%
then there exists $u\in V_{1/2}$ such that
\begin{equation*}
\mathcal{E}_{A,\lambda }\left( u_{n}-u,u_{n}-u\right) \rightarrow 0\text{ as
}n\rightarrow \infty .
\end{equation*}

\item For each $\epsilon >0$ there exists a function $\phi _{\epsilon }:%
\mathbb{R}\rightarrow \mathbb{R}$, such that $\phi _{\epsilon }\in C^{\infty
}({\mathbb{R}})$, $\phi _{\epsilon }\left( t\right) =t,$ for $t\in \left[ 0,1%
\right] ,$ $-\epsilon \leq \phi _{\epsilon }\left( t\right) \leq 1+\epsilon $%
, for all $t\in \mathbb{R}$, $0\leq \phi _{\epsilon }\left( t\right) -\phi
_{\epsilon }\left( \tau \right) \leq t-\tau $, whenever $\tau <t$, such that
$u\in V_{1/2}$ implies $\phi _{\epsilon }\left( u\right) \in V_{1/2}$ and $%
\mathcal{E}_{A}\left( \phi _{\epsilon }\left( u\right) ,\phi _{\epsilon
}\left( u\right) \right) \leq \mathcal{E}_{A}\left( u,u\right) .$
\end{enumerate}
\end{definition}

\begin{remark}
\label{rem1} \emph{We make the following important remarks. }

\begin{itemize}
\item \emph{Clearly, $D\left( \mathcal{E}_{A}\right) =V_{1/2}$ is a real
Hilbert space with inner product $\mathcal{E}_{A,\lambda }\left(
\cdot,\cdot\right) $ for each $\lambda >0$. We recall that a form $\mathcal{E%
}_{A}$ which satisfies (a)-(c) is closed and symmetric. If $\mathcal{E}_{A}$
also satisfies (d), then it is said to be a Markovian form. }

\item \emph{When $\mathcal{E}_{A}$ is closed, (d) is equivalent to the
following more simple condition: }
\end{itemize}

\begin{enumerate}
\item[\emph{(d)$^{^{\prime }}$}] \emph{$u\in V_{1/2}$, $v$ is a normal
contraction of $u$ implies $v\in V_{1/2}$ and $\mathcal{E}_{A}\left(
v,v\right) \leq \mathcal{E}_{A}\left( u,u\right) .$ We call $v\in
L^{2}\left( X\right) $ a normal contraction of $u\in L^{2}\left( X\right) $
if some Borel version of $v$ is a normal contraction of some Borel version
of $u\in L^{2}\left( X\right) $, that is, $\left\vert v\left( x\right)
\right\vert \leq \left\vert u\left( x\right) \right\vert ,$ for all $x\in X$%
, and
\begin{equation*}
\left\vert v\left( x\right) -v\left( y\right) \right\vert \leq \left\vert
u\left( x\right) -u\left( y\right) \right\vert ,\text{ for all }x,y\in X.
\end{equation*}
}
\end{enumerate}
\end{remark}

It is well-known that there is a one-to-one correspondence between the
family of closed symmetric forms $\mathcal{E}_{A}$ on $L^{2}\left( X\right) $
and the family of \emph{non-negative} (definite) \emph{self-adjoint}
operators $A$ on $L^{2}\left( X\right) $ in the following sense:%
\begin{equation}
\left\{
\begin{array}{l}
V_{1/2}=D\left( A^{1/2}\right) ,\text{ }D\left( A\right) \hookrightarrow
V_{1/2},\text{ and} \\
\mathcal{E}_{A}\left( u,v\right) =\left( Au,v\right) ,\text{ }u\in D\left(
A\right) ,\text{ }v\in V_{1/2}.%
\end{array}%
\right.  \label{one-to-one}
\end{equation}%
From now on, we shall refer to $\left( \mathcal{E}_{A},V_{1/2}\right) $ as a
\emph{Dirichlet space} whenever $\mathcal{E}_{A}$ is a Dirichlet form on $%
L^2(X)$ with $D\left( \mathcal{E}_{A}\right) =V_{1/2}$ in the sense of
Definition \ref{Diri-form}.

Finally, any self-adjoint operator $A,$ that is in one-to-one correspondence
with the Dirichlet form $\mathcal{E}_{A}$ (see (\ref{one-to-one})), turns
out to possess a number of good properties provided a certain Sobolev
embedding theorem holds for $V_{1/2}$ (see, for instance, \cite[Theorem 2.9]%
{G-DCDS}). Similar results in abstract form can be also found in the
monographs \cite{Dav,Fuk}.

\begin{theorem}
\label{thm-main}Let $A$ be the operator associated with the Dirichlet space $%
\left( \mathcal{E}_{A},V_{1/2}\right) $. Assume $V_{1/2}\overset{c}{%
\hookrightarrow }L^{2}\left( X\right) $ and%
\begin{equation}
V_{1/2}\hookrightarrow L^{2q_{A}}\left( X\right) ,\text{ for some }q_{A}>1.
\label{Sobolev}
\end{equation}%
Then the following assertions hold.

\begin{enumerate}
\item The operator $-A$ generates a submarkovian semigroup $(e^{-tA})_{t\geq
0}$ on $L^{2}\left( X\right) $. The semigroup can be extended to a
contraction semigroup on $L^{p}\left( X\right) $ for every $p\in \lbrack
1,\infty ]$, and each semigroup is strongly continuous if $p\in \lbrack
1,\infty )$ and bounded analytic if $p\in (1,\infty )$. Each such semigroup
on $L^{p}\left( X\right) $ is compact for every $p\in \lbrack 1,\infty ].$

\item The operator $A$ has a compact resolvent, and hence has a discrete
spectrum. The spectrum of $A$ is an increasing sequence of real numbers $%
0\leq \lambda _{1}\leq \lambda _{2}\leq \cdots \leq \lambda _{n}\leq \dots ,$
that converges to $+\infty$.

\item If $\varphi _{n}$ is an eigenfunction associated with $\lambda _{n}$,
then $\varphi _{n}\in D(A)\cap L^{\infty }\left( X\right) $.

\item For $\theta \in (0,1]$, the embedding $D\left( A^{\theta }\right)
\hookrightarrow L^{\infty }\left( X\right) $ holds provided that%
\begin{equation*}
\theta >\frac{q_{A}}{2\left( q_{A}-1\right) }=:\theta _{A}.
\end{equation*}

\item Moreover, when $\theta =\theta _{A}$ we have that $D\left( A^{\theta
}\right)\hookrightarrow L^{2r_{\ast }}\left( X\right) ,$ for any $r_{\ast
}\in \left( 1,\infty \right)$. If $\theta <\theta _{A}$ then%
\begin{equation*}
D\left( A^{\theta }\right)\hookrightarrow L^{2r_{\ast }}\left( X\right) ,%
\text{ for }r_{\ast }=\frac{\theta _{A}}{\theta _{A}-\theta }.
\end{equation*}
\end{enumerate}
\end{theorem}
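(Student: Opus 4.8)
The plan is to obtain every assertion from the theory of Dirichlet forms in \cite{Fuk} combined with the ultracontractivity that the Sobolev embedding \eqref{Sobolev} forces on the semigroup $(e^{-tA})_{t\ge 0}$.

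\textbf{Generation, extension and compactness (assertion (1)).} By the one-to-one correspondence \eqref{one-to-one}, $A$ is the nonnegative self-adjoint operator associated with the closed symmetric form $\mathcal{E}_A$, so $-A$ generates a self-adjoint (hence bounded analytic on $L^2(X)$) $C_0$-semigroup; property (d) of Definition \ref{Diri-form} (the second Beurling--Deny criterion) makes the semigroup submarkovian, i.e.\ each $e^{-tA}$ is a positive contraction on $L^2(X)$ that is also $L^\infty$-contractive. Riesz--Thorin interpolation and duality then extend $(e^{-tA})_{t\ge0}$ to a contraction semigroup on every $L^p(X)$, $1\le p\le\infty$, strongly continuous for $p\in[1,\infty)$; analyticity on $L^p(X)$, $1<p<\infty$, follows from self-adjointness on $L^2(X)$ together with submarkovianity by Stein interpolation (see also \cite{Dav}). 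For compactness, the Sobolev hypothesis yields the ultracontractive bound of assertion (4) below, $e^{-tA}\colon L^1(X)\to L^\infty(X)$ bounded for each $t>0$; writing $e^{-tA}=e^{-(t/3)A}e^{-(t/3)A}e^{-(t/3)A}$ and using $L^\infty(X)\hookrightarrow L^2(X)$ (valid since $m(X)<\infty$) one factors $e^{-tA}$ acting on $L^p(X)$ through $e^{-(t/3)A}\colon L^2(X)\to L^2(X)$, which is compact because $D(A)\hookrightarrow V_{1/2}\overset{c}{\hookrightarrow}L^2(X)$.

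\textbf{Spectrum and eigenfunctions (assertions (2)--(3)).} Since $(\lambda+A)^{-1}$ maps $L^2(X)$ boundedly into $D(A)\hookrightarrow V_{1/2}\overset{c}{\hookrightarrow}L^2(X)$, the resolvent of $A$ is compact; the spectral theorem then gives a discrete spectrum of finite-multiplicity eigenvalues tending to $+\infty$, and $(Au,u)=\mathcal{E}_A(u,u)\ge 0$ forces $\lambda_n\ge0$. If $A\varphi_n=\lambda_n\varphi_n$ then $\varphi_n\in D(A)$ and $\varphi_n=e^{t\lambda_n}e^{-tA}\varphi_n$ for every $t>0$; since $e^{-tA}$ maps $L^2(X)$ into $L^\infty(X)$ (again by the ultracontractive estimate below), $\varphi_n\in L^\infty(X)$.

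\textbf{Sobolev embeddings (assertions (4)--(5)).} This is the analytic heart. Interpolating $V_{1/2}\hookrightarrow L^{2q_A}(X)$ against $V_{1/2}\hookrightarrow L^2(X)$ gives a Nash-type inequality, whence the standard Varopoulos--Carlen--Kusuoka--Stroock argument (cf.\ \cite[Theorem 2.9]{G-DCDS}, \cite{Dav}) produces, for $0<t\le1$, the on-diagonal bound $\|e^{-tA}\|_{L^1(X)\to L^\infty(X)}\le C t^{-\mu}$ with $\mu=2\theta_A=q_A/(q_A-1)$, hence by duality and Riesz--Thorin $\|e^{-tA}\|_{L^2(X)\to L^q(X)}\le C_q\, t^{-\frac{\mu}{2}(1-2/q)}$. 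Recalling that $D(A^\theta)=D((I+A)^\theta)$ with equivalent norms and using
\[
(I+A)^{-\theta}=\frac{1}{\Gamma(\theta)}\int_0^\infty t^{\theta-1}e^{-t}e^{-tA}\,dt,
\]
one bounds $\|(I+A)^{-\theta}f\|_{L^q(X)}$ by $\|f\|_{L^2(X)}$ times an integral whose only possible divergence is at $t=0$, where the integrand is $\asymp t^{\theta-1-\frac{\mu}{2}(1-2/q)}$; this is integrable precisely when $\theta>\frac{\mu}{2}(1-2/q)$. Taking $q=\infty$ yields $D(A^\theta)\hookrightarrow L^\infty(X)$ for $\theta>\theta_A$, which is assertion (4), and $\theta=\theta_A$ is admissible for every finite $q=2r_\ast$, giving the first part of assertion (5). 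For the sharp value $r_\ast=\theta_A/(\theta_A-\theta)$ when $\theta<\theta_A$ the exponent balance is borderline divergent for the semigroup integral, so there I would instead use complex interpolation: for $\theta\le\tfrac12$, $D(A^\theta)=[L^2(X),D(A^{1/2})]_{2\theta}\hookrightarrow[L^2(X),L^{2q_A}(X)]_{2\theta}=L^{2r_\ast}(X)$ with $1/r_\ast=1-\theta/\theta_A$, using $D(A^{1/2})=V_{1/2}$; and for $\tfrac12<\theta<\theta_A$ one reaches the same conclusion by reinterpolating between two of the already-established convergent cases $\theta_1<\theta<\theta_2$ (equivalently, bootstrapping the gain of integrability through the fractional powers of $I+A$).

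\textbf{Main obstacle.} The bookkeeping for (1)--(3) is routine once ultracontractivity is available; the genuine difficulty is in (4)--(5), namely obtaining the \emph{sharp} exponents in the embeddings at and below the critical value $\theta_A$, where the crude subordination estimate just fails and one must pass through complex interpolation (or a more delicate Varopoulos-type limiting argument).
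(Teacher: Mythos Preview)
The paper does not give a proof of this theorem at all: it is stated as a known result with the reference ``see, for instance, \cite[Theorem 2.9]{G-DCDS}. Similar results in abstract form can be also found in the monographs \cite{Dav,Fuk}.'' So your sketch is not competing against an argument in the paper but rather supplying one where the paper simply cites the literature. In that sense your write-up is already more than what the paper provides, and your outline for (1)--(4) and for the first half of (5) (the case $\theta=\theta_A$ and the case $\theta\le\tfrac12$ via complex interpolation $D(A^\theta)=[L^2,D(A^{1/2})]_{2\theta}\hookrightarrow[L^2,L^{2q_A}]_{2\theta}$) is correct and gives the sharp exponents.

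There is, however, a genuine soft spot in your treatment of (5) when $\tfrac12<\theta<\theta_A$. Neither of the two fixes you propose actually reaches the stated endpoint $r_\ast=\theta_A/(\theta_A-\theta)$. Interpolating between two ``already-established'' cases $\theta_1<\theta<\theta_2$ (say $\theta_1=\tfrac12$ with $D(A^{1/2})\hookrightarrow L^{2q_A}$ and $\theta_2>\theta_A$ with $D(A^{\theta_2})\hookrightarrow L^\infty$) yields $D(A^\theta)\hookrightarrow L^p$ with an exponent $p$ that matches $2r_\ast$ only in the limit $\theta_2\to\theta_A^+$; for any admissible $\theta_2$ you land strictly below the claimed $r_\ast$. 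The same defect occurs if you interpolate with $D(A)\hookrightarrow L^\infty$ (when $\theta_A<1$): the resulting exponent $1/(1-\theta)$ is again smaller than $\theta_A/(\theta_A-\theta)$. Your crude subordination estimate, as you yourself note, also just misses. What is actually needed for the endpoint is the abstract Hardy--Littlewood--Sobolev theorem for Riesz potentials of ultracontractive semigroups (Varopoulos; see \cite{Dav} or Varopoulos--Saloff-Coste--Coulhon): from $\|e^{-tA}\|_{L^1\to L^\infty}\le Ct^{-n/2}$ with $n=4\theta_A$ one obtains $(I+A)^{-\theta}\colon L^p\to L^q$ bounded for $1<p<q<\infty$, $1/q=1/p-2\theta/n$, via a weak-type bound plus Marcinkiewicz interpolation. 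With $p=2$ this gives exactly $q=2\theta_A/(\theta_A-\theta)$. You gesture at this (``a more delicate Varopoulos-type limiting argument''), and that is indeed the correct route; the point is simply that the reinterpolation/bootstrap alternatives you list do not close the gap on their own.
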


In what follows, without loss of generality we can assume that $\lambda
_{1}>0;$ otherwise, one can replace the operator $A$ by $A+\varepsilon I$, $%
\varepsilon>0$, to satisfy this assumption. In that case all the eigenvalues
are of finite multiplicity.

For all $s\geq 0$, the operator $A^{s}$ also possesses the following
representation:%
\begin{equation}  \label{frac-pow}
A^{s}h=\sum_{n=1}^\infty\left(h,\varphi _{n}\right)\lambda _{n}^{s}\varphi
_{n},\quad h\in D(A^{s})=\left\{ h\in L^{2}(X):\ \sum_{n=1}^\infty\left\vert
\left( h,\varphi _{n}\right) \right\vert ^{2}\lambda _{n}^{2s}<\infty
\right\} .
\end{equation}%
Consider on $D(A^{s})$ the norm (recall that $\lambda_1>0$)
\begin{equation*}
\Vert h\Vert _{D(A^{s})}=\left( \sum_{n=1}^\infty\left\vert \left( h,\varphi
_{n}\right) \right\vert ^{2}\lambda _{n}^{2s}\right) ^{\frac{1}{2}},\quad
h\in D(A^{s}).
\end{equation*}%
By duality, we can also set $D(A^{-s})=(D(A^{s}))^{\ast }$ by identifying $%
(L^{2}(X))^{\ast }=L^{2}\left( X\right) ,$ and using the so called Gelfand
triple (see e.g. \cite{ATW}). Then $D(A^{-s})$ is a Hilbert space with the
norm
\begin{equation*}
\|h\|_{D(A^{-s})}=\left( \sum_{n=1}^\infty\left\vert \left\langle h,\varphi
_{n}\right\rangle \right\vert ^{2}\lambda _{n}^{-2s}\right) ^{\frac{1}{2}},
\end{equation*}
where $\langle\cdot,\cdot\rangle$ denotes the duality bracket between $%
D(A^{-s})$ and $D(A)$. Since $D(A^{1/2})=V_{1/2}$, we identify $V_{-1/2}$
with $D(A^{-1/2})$.

Throughout the remainder of the paper, without any mention we shall assume
that $A$ satisfies the above assumptions. In addition, given any Banach
space $Y$ and its dual $Y^\star$, we shall denote by $\left\langle \cdot
,\cdot \right\rangle_{Y^\star,Y} $ their duality bracket.

\subsection{Properties of Mittag-Leffler functions}

The Mittag-Leffler function with two parameters is defined as follows:
\begin{equation*}
E_{\alpha ,\beta }(z):=\sum_{n=0}^{\infty }\frac{z^{n}}{\Gamma (\alpha
n+\beta )},\;\;\alpha >0,\;\beta \in {\mathbb{C}},\quad z\in {\mathbb{C}},
\end{equation*}
where $\Gamma$ is the usual Gamma function. It is well-known that $E_{\alpha
,\beta }(z)$ is an entire function. The following estimate of the
Mittag-Leffler function will be useful. Let $0<\alpha <2$, $\beta \in {%
\mathbb{R}}$ and $\mu $ be such that $\frac{\alpha \pi }{2}<\mu <\min \{\pi
,\alpha \pi \}$. Then there is a constant $C=C(\alpha ,\beta ,\mu )>0$ such
that
\begin{equation}
|E_{\alpha ,\beta }(z)|\leq \frac{C}{1+|z|},\;\;\;\mu \leq |\mbox{arg}%
(z)|\leq \pi .  \label{Est-MLF}
\end{equation}%
In the literature, the notation $E_{\alpha }=E_{\alpha ,1}$ is frequently
used. %The Laplace transform of the Mittag-Leffler function is given by:
%\begin{equation}
%\int_{0}^{\infty }e^{-\lambda t}t^{\alpha k+\beta -1}E_{\alpha ,\beta
%}^{(k)}(\pm \omega t^{\alpha })dt=\frac{k!\lambda ^{\alpha -\beta }}{%
%(\lambda ^{\alpha }\mp \omega )^{k+1}},\quad \mbox{Re}(\lambda )>|\omega
%|^{1/\alpha }.  \label{lap-ml}
%\end{equation}%
%Here, $k\in \mathbb{N}\cup \{0\}$ and $\omega \in \mathbb{R}.$
It is well-know that, for $0<\alpha <2$:
\begin{equation}
\mathbb{D}_{t}^{\alpha }E_{\alpha ,1}(zt^{\alpha })=zE_{\alpha
,1}(zt^{\alpha }),\quad t>0,z\in {\mathbb{C}},  \label{Der-ML}
\end{equation}%
namely, for every $z\in {\mathbb{C}}$, the function $u(t):=E_{\alpha
,1}(zt^{\alpha })$ is a solution of the scalar valued ordinary differential
equation
\begin{equation*}
\mathbb{D}_{t}^{\alpha }u(t)=zu(t),\;\;t>0,\;0<\alpha <2.
\end{equation*}%
Moreover, we have that for $\alpha >0$, $\lambda >0$, $t>0$ and $m\in {%
\mathbb{N}}$,
\begin{align}
&\frac{d^{m}}{dt^{m}}\left[ E_{\alpha ,1}(-\lambda t^{\alpha })\right]
=-\lambda t^{\alpha -m}E_{\alpha ,\alpha -m+1}(-\lambda t^{\alpha }),\label{Est-MLF2}\\
&\frac{d}{dt}\left[ tE_{\alpha ,2}(-\lambda t^{\alpha })\right] =E_{\alpha
,1}(-\lambda t^{\alpha }),  \label{2}\\
&\frac{d}{dt}\left[ t^{\alpha -1}E_{\alpha ,\alpha }(-\lambda t^{\alpha })%
\right] =t^{\alpha -2}E_{\alpha ,\alpha -1}(-\lambda t^{\alpha }).  \label{3}
\end{align}
The proof of \eqref{Est-MLF} is contained in \cite[Theorem 1.6]{Po99}. For %
\eqref{Der-ML} we refer to \cite[Section 1.3]{Ba01} and the
proofs of \eqref{Est-MLF2}-\eqref{3} are contained in \cite[Section 1.2.3,
Formula (1.83)]{Po99}. For more details on the Mittag-Leffler functions we
refer the reader to \cite{Agr, Ba01,Go-Ma97,Ma97,Go-Ma00,Mi-Ro,Po99} and the
references therein.

In what follows, we will also exploit the following estimates. They follow
easily from \eqref{Est-MLF} and some straightforward computations\footnote{%
See \cite[Lemma 3.3]{KW}.}.

\begin{lemma}
\label{lem-INE} Let $1<\alpha<2$ and $\alpha^{\prime }>0$. Then the
following assertions hold.

\begin{enumerate}
\item Let $0\leq \beta \le 1$, $0<\gamma <\alpha $ and $\lambda >0$. Then
there is a constant $C>0$ such that for every $t>0$,
\begin{equation}
|\lambda ^{\beta }t^{\gamma }E_{\alpha ,\alpha ^{\prime }}(-\lambda
t^{\alpha })|\leq Ct^{\gamma -\alpha \beta }.  \label{IN-L1}
\end{equation}

\item Let $0\leq \gamma \leq 1$ and $\lambda >0$. Then there is a constant $%
C>0$ such that for every $t>0$,
\begin{equation}
|\lambda ^{1-\gamma }t^{\alpha -2}E_{\alpha ,\alpha ^{\prime }}(-\lambda
t^{\alpha })|\leq Ct^{\alpha \gamma -2}.  \label{IN-L2}
\end{equation}
\end{enumerate}
\end{lemma}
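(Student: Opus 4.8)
The plan is to derive both estimates directly from the fundamental Mittag-Leffler bound \eqref{Est-MLF}. The point is that for $\lambda>0$ and $t>0$ the argument $z=-\lambda t^\alpha$ is a negative real number, so $|\arg(z)|=\pi$, and since $1<\alpha<2$ we may certainly choose $\mu$ with $\frac{\alpha\pi}{2}<\mu<\min\{\pi,\alpha\pi\}=\pi$; hence \eqref{Est-MLF} applies to give $|E_{\alpha,\alpha'}(-\lambda t^\alpha)|\le C/(1+\lambda t^\alpha)$ with $C=C(\alpha,\alpha',\mu)$. Everything then reduces to an elementary interpolation of $1/(1+\lambda t^\alpha)$ against powers of $\lambda t^\alpha$.

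For part (1), I would start from
\[
|\lambda^\beta t^\gamma E_{\alpha,\alpha'}(-\lambda t^\alpha)|
\le C\,\frac{\lambda^\beta t^\gamma}{1+\lambda t^\alpha}
= C\,t^{\gamma-\alpha\beta}\,\frac{(\lambda t^\alpha)^\beta}{1+\lambda t^\alpha}.
\]
Since $0\le\beta\le1$, the function $r\mapsto r^\beta/(1+r)$ is bounded on $[0,\infty)$ (it tends to $0$ as $r\to0^+$ when $\beta>0$, equals $1/(1+r)$ when $\beta=0$, and decays like $r^{\beta-1}\to0$ as $r\to\infty$; being continuous it attains a finite maximum $M_\beta$). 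Absorbing $M_\beta$ into the constant gives \eqref{IN-L1}. Strictly speaking the hypotheses $0<\gamma<\alpha$ are not even needed for this particular inequality, but they are harmless; I would simply remark that they are inherited from the intended range of application.

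For part (2), the same idea works after factoring out $t^{\alpha-2}$ and an extra power of $\lambda$: with $0\le\gamma\le1$,
\[
|\lambda^{1-\gamma} t^{\alpha-2} E_{\alpha,\alpha'}(-\lambda t^\alpha)|
\le C\,\frac{\lambda^{1-\gamma} t^{\alpha-2}}{1+\lambda t^\alpha}
= C\,t^{\alpha\gamma-2}\,\frac{(\lambda t^\alpha)^{1-\gamma}}{1+\lambda t^\alpha},
\]
and since $0\le 1-\gamma\le1$ the last fraction is again bounded by a constant depending only on $\gamma$, which yields \eqref{IN-L2}. No step here is a real obstacle; the only thing to be careful about is the verification that \eqref{Est-MLF} is applicable, i.e.\ that the range $1<\alpha<2$ permits a legal choice of $\mu$ and that the negative-real argument lies in the stated sector $\mu\le|\arg z|\le\pi$ — which it does, being exactly on the boundary ray $\arg z=\pi$. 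I would state this once at the outset and then the two estimates follow by the identical boundedness argument for $r^\theta/(1+r)$ on $[0,\infty)$ with $\theta=\beta$ and $\theta=1-\gamma$ respectively.
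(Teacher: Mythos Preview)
Your proposal is correct and follows exactly the approach indicated in the paper, which states that the estimates ``follow easily from \eqref{Est-MLF} and some straightforward computations'' (with a reference to \cite[Lemma 3.3]{KW}). Your factorization $\lambda^\beta t^\gamma/(1+\lambda t^\alpha)=t^{\gamma-\alpha\beta}(\lambda t^\alpha)^\beta/(1+\lambda t^\alpha)$ together with the boundedness of $r\mapsto r^\theta/(1+r)$ for $\theta\in[0,1]$ is precisely the straightforward computation intended.
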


\section{The linear problem}

\label{lin-pro}

Recall that $X$ is a relatively compact metric space, $m$ is a Radon measure
on $X$ and $A$ is the self-adjoint operator in $L^2(X)$ associated with a
Dirichlet space $(\mathcal{E}_A,V_{1/2})$ in the sense of \eqref{one-to-one}%
. Throughout the remainder of the article, without any mention, by a.e. on $%
X $, we shall mean $m$ a.e. on $X$. Let $1<\alpha <2$ and consider the
following fractional in time wave equation
\begin{equation}
\begin{cases}
\mathbb{D}_{t}^{\alpha }u\left( x,t\right) +Au\left( x,t\right) =f\left(
x,t\right) \;\; & \mbox{ in }\;X\times (0,T), \\
u(\cdot ,0)=u_{0},\;\;\partial _{t}u(\cdot ,0)=u_{1} & \mbox{ in }\;X,%
\end{cases}
\label{EQ-LI}
\end{equation}%
where $u_{0},u_{1}$ and $f$ are given functions. Our notion of weak
solutions to the system \eqref{EQ-LI} is as follows.

\begin{definition}
\label{def-weak} Set $\gamma :=1/\alpha \in \left( 1/2,1\right) $ and $%
V_{\gamma }:=D\left( A^{\gamma }\right) $. A function $u$ is said to be a
weak solution of \eqref{EQ-LI} on $(0,T)$, for some $T>0$, if the following
assertions hold.

\begin{itemize}
\item Regularity:%
\begin{equation}
\begin{cases}
u\in C([0,T];V_{\gamma })\cap C^{1}([0,T];L^{2}(X)), \\
\mathbb{D}_{t}^{\alpha }u\in C([0,T];V_{-\gamma }).%
\end{cases}
\label{reg-lin}
\end{equation}

\item Initial conditions:
\begin{equation}  \label{Ini-0}
u(\cdot,0)=u_0,\;\;\;\; \partial_tu(\cdot,0)=u_1\;\mbox{ a.e. in }\;
X,
\end{equation}
and
\begin{equation}
\lim_{t\rightarrow 0^{+}}\left\Vert u(\cdot ,t)-u_{0}\right\Vert _{V_{\sigma
}}=0,\lim_{t\rightarrow 0^{+}}\left\Vert \partial _{t}u(\cdot
,t)-u_{1}\right\Vert _{V_{-\beta }}=0,  \label{ini}
\end{equation}%
for some
\begin{align*}
\;\frac{1}{\alpha }=\gamma >\sigma \geqslant 0 \;\mbox{ and }\;1-\frac{1}{%
\alpha }\geqslant \beta >0.
\end{align*}

\item Variational identity: for every $\varphi \in V_{\gamma
}\hookrightarrow V_{1/2} $ and a.e. $t\in (0,T)$, we have
\begin{equation}
\langle \mathbb{D}_{t}^{\alpha }u(\cdot ,t),\varphi \rangle _{V_{-\gamma
},V_{\gamma }}+\mathcal{E}_{A}(u(\cdot ,t),\varphi )=\left( f\left( \cdot
,t\right) ,\varphi \right) .  \label{Var-I}
\end{equation}
\end{itemize}
\end{definition}

We first prove well-posedness for the linear problem.

\begin{theorem}
\label{theo-weak} Let $u_{0}\in V_{\gamma }$, $u_{1}\in L^{2}(X)$ and
\begin{equation*}
f\in C\left( [0,T];V_{-\gamma }\right)\cap L^{q}((0,T);L^{2}(X)),\text{ for }%
\frac{1}{p}+\frac{1}{q}=1\text{, and }1\leq p<\frac{1}{2-\alpha }.
\end{equation*}%
Let the assumptions of Theorem \ref{thm-main} hold. Then the system %
\eqref{EQ-LI} has a unique weak solution $u$ given by%
\begin{align}
u(\cdot ,t)=& \sum_{n=1}^{\infty }(u_{0},\varphi _{n})E_{\alpha ,1}(-\lambda
_{n}t^{\alpha })\varphi _{n}+\sum_{n=1}^{\infty }(u_{1},\varphi
_{n})tE_{\alpha ,2}(-\lambda _{n}t^{\alpha })\varphi _{n}  \label{sol-spec}
\\
& +\sum_{n=1}^{\infty }\left( \int_{0}^{t}(f(\cdot ,\tau ),\varphi
_{n})(t-\tau )^{\alpha -1}E_{\alpha ,\alpha }(-\lambda _{n}(t-\tau )^{\alpha
})\;d\tau \right) \varphi _{n}.  \notag
\end{align}%
Moreover, there is a constant $C>0$ such that for all $t\in \lbrack 0,T]$,%
\begin{equation}
\Vert u(\cdot ,t)\Vert _{V_{\gamma }}+\Vert \partial _{t}u(\cdot ,t)\Vert
_{L^{2}(X)}\leq C\left( \Vert u_{0}\Vert _{V_{\gamma }}+\Vert u_{1}\Vert
_{L^{2}(X)}+t^{\frac{1}{p}+\alpha-2}\Vert f\Vert
_{L^{q}((0,T);L^{2}(X))}\right) ,  \label{EST-1}
\end{equation}%
and
\begin{equation}
\Vert \mathbb{D}_{t}^{\alpha }u(\cdot ,t)\Vert _{V_{-\gamma }}\leq C\left(
\Vert u_{0}\Vert _{V_{\gamma }}+t^{2-\alpha }\Vert u_{1}\Vert
_{L^{2}(X)}+t^{\frac 1p}\Vert f\Vert _{L^{q}((0,T);L^{2}(X))}+\left\Vert
f\right\Vert _{C\left( [0,T];V_{-\gamma }\right) }\right) .  \label{EST-1-3}
\end{equation}
\end{theorem}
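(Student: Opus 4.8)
The plan is to solve \eqref{EQ-LI} by spectral decomposition against the eigenbasis $\{\varphi_n\}$ of $A$, reduce it to a family of scalar fractional Cauchy problems, reassemble the solution as the series \eqref{sol-spec}, and control every piece by the Mittag-Leffler estimates of Lemma \ref{lem-INE}. First I would project the equation onto each $\varphi_n$: setting $u_n(t):=(u(\cdot,t),\varphi_n)$ and $f_n(t):=(f(\cdot,t),\varphi_n)$, the variational identity \eqref{Var-I} together with $\mathcal{E}_A(\varphi_n,\varphi_n)=\lambda_n$ forces the scalar problem
\[
\mathbb{D}_t^\alpha u_n(t)+\lambda_n u_n(t)=f_n(t),\qquad u_n(0)=(u_0,\varphi_n),\quad u_n'(0)=(u_1,\varphi_n).
\]
By the standard scalar theory, using \eqref{Der-ML} and the identities \eqref{2}, \eqref{3} (together with the fact that $s\mapsto s^{\alpha-1}E_{\alpha,\alpha}(-\lambda_n s^\alpha)$ vanishes at $s=0$ since $\alpha>1$), this problem has the unique solution $u_n(t)=(u_0,\varphi_n)E_{\alpha,1}(-\lambda_n t^\alpha)+(u_1,\varphi_n)tE_{\alpha,2}(-\lambda_n t^\alpha)+\int_0^t f_n(\tau)(t-\tau)^{\alpha-1}E_{\alpha,\alpha}(-\lambda_n(t-\tau)^\alpha)\,d\tau$, which is exactly the $n$-th coefficient of \eqref{sol-spec}.

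Next I would show that $u(\cdot,t):=\sum_n u_n(t)\varphi_n$ has the regularity \eqref{reg-lin} and satisfies \eqref{EST-1}. For the $V_\gamma$-bound I would estimate $\sum_n\lambda_n^{2\gamma}|u_n(t)|^2$ term by term: the $u_0$-contribution is $\le C\|u_0\|_{V_\gamma}^2$ since $|E_{\alpha,1}(-\lambda_n t^\alpha)|\le C$ uniformly by \eqref{Est-MLF}; the $u_1$-contribution is $\le C\|u_1\|_{L^2(X)}^2$ upon applying \eqref{IN-L1} with $\beta=\gamma$, exponent $1$, second index $2$, which gives $\lambda_n^{\gamma}t\,|E_{\alpha,2}(-\lambda_n t^\alpha)|\le C$; and the convolution contribution is handled by Minkowski's integral inequality after using \eqref{IN-L1} with $\beta=\gamma$, exponent $\alpha-1$, second index $\alpha$, which yields $\lambda_n^\gamma(t-\tau)^{\alpha-1}|E_{\alpha,\alpha}(-\lambda_n(t-\tau)^\alpha)|\le C(t-\tau)^{\alpha-2}$, followed by H\"older in time; here the hypothesis $p<\tfrac1{2-\alpha}$ is precisely what makes $\int_0^t(t-\tau)^{(\alpha-2)p}\,d\tau$ finite and produces the weight $t^{1/p+\alpha-2}$. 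The bound for $\partial_t u$ follows by differentiating each term via \eqref{Est-MLF2} (so $\tfrac{d}{dt}E_{\alpha,1}(-\lambda_n t^\alpha)=-\lambda_n t^{\alpha-1}E_{\alpha,\alpha}(-\lambda_n t^\alpha)$), \eqref{2}, and \eqref{3} (the convolution has vanishing boundary term), and re-running the same Mittag-Leffler estimates, now with $\beta=1-\gamma$.

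For $\mathbb{D}_t^\alpha u$ the point is that each building block is a solution of the scalar equation, so $\mathbb{D}_t^\alpha$ applied to the $E_{\alpha,1}$- and $tE_{\alpha,2}$-blocks simply reproduces $-\lambda_n$ times themselves, while on the convolution block it gives $-\lambda_n w_n(t)+f_n(t)$; estimating $\sum_n\lambda_n^{-2\gamma}$ of these quantities (again via \eqref{IN-L1}, this time tracking the time weights $t^{2-\alpha}$ and $t^{1/p}$) and using that $A$ maps $V_\gamma$ boundedly into $V_{\gamma-1}\hookrightarrow V_{-\gamma}$ (which holds because $\gamma\ge\tfrac12$, i.e.\ $2-4\gamma\le 0$, so $\lambda_n^{2-4\gamma}$ is bounded), together with $f\in C([0,T];V_{-\gamma})$, yields \eqref{EST-1-3}. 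Continuity in $t$, as opposed to mere boundedness, follows from the same estimates applied to tails of the series combined with dominated convergence.

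Finally I would verify the initial and variational conditions and prove uniqueness. Evaluating \eqref{sol-spec} at $t=0$ gives $u(\cdot,0)=u_0$ and, using $\alpha-1>0$, $\partial_t u(\cdot,0)=u_1$; \eqref{ini} then follows from \eqref{reg-lin}. The identity \eqref{Var-I} is obtained by pairing the series with $\varphi\in V_\gamma$, interchanging the (now justified) sum with $\mathbb{D}_t^\alpha$ and $\mathcal{E}_A$, and invoking the scalar equation for each $u_n$. For uniqueness, given two weak solutions, their difference $w$ solves the homogeneous problem with zero Cauchy data, so each coefficient $w_n(t)=(w(\cdot,t),\varphi_n)$ is a $C^1$ function with $w_n(0)=w_n'(0)=0$ satisfying $\mathbb{D}_t^\alpha w_n+\lambda_n w_n=0$ (this identification uses the Riemann--Liouville form \eqref{fra-RL}, which coincides with the Caputo derivative since the Cauchy data vanish), whence $w_n\equiv 0$ by scalar uniqueness and $w\equiv 0$. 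I expect the main obstacle to be the rigorous justification of the term-by-term action of the nonlocal operator $\mathbb{D}_t^\alpha$ on the series and the attendant interchange of limits, and, relatedly, pinning down exactly the time weights in \eqref{EST-1} and \eqref{EST-1-3}; everything else reduces to the scalar estimates of Lemma \ref{lem-INE} together with Minkowski's and H\"older's inequalities.
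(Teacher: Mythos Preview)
Your proposal is correct and follows essentially the same approach as the paper: spectral decomposition against the eigenbasis, solution of the scalar fractional Cauchy problems via Mittag-Leffler functions, termwise estimates from Lemma~\ref{lem-INE} combined with Minkowski and H\"older (the condition $p<\tfrac{1}{2-\alpha}$ making $(t-\tau)^{\alpha-2}$ locally $L^p$), and uniqueness via the homogeneous scalar ODE. The only organizational difference is that the paper starts from the formula \eqref{sol-spec} and verifies the required regularity step by step, whereas you first derive the scalar equations and then reassemble; in particular, for \eqref{ini} the paper gives a direct dominated-convergence argument while you (correctly) observe that it is immediate from the established $C([0,T];V_\gamma)\cap C^1([0,T];L^2(X))$ regularity.
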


\begin{proof}
We shall now use the notation
\begin{equation*}
(u_{0},\varphi _{n})=u_{0,n},\;\;(u_{1},\varphi _{n})=u_{1,n}\;\mbox{ and }%
\;(f(\cdot ,t),\varphi _{n})=f_{n}(t).
\end{equation*}%
We prove the result in several steps.

\textbf{Step 1}. We show that $u\in C([0,T];V_{\gamma })$. Let $t\in \lbrack
0,T]$ and set
\begin{equation*}
S_{1}(t)u_{0}:=\sum_{n=1}^{\infty }u_{0,n}E_{\alpha ,1}(-\lambda
_{n}t^{\alpha })\varphi _{n},\;\;\;\;S_{2}(t)u_{1}:=\sum_{n=1}^{%
\infty }u_{1,n}tE_{\alpha ,2}(-\lambda _{n}t^{\alpha })\varphi _{n},
\end{equation*}%
and
\begin{equation*}
S_{3}(t)f:=\sum_{n=1}^{\infty }\left( \int_{0}^{t}f_{n}(\tau )(t-\tau
)^{\alpha -1}E_{\alpha ,\alpha }(-\lambda _{n}(t-\tau )^{\alpha })\;d\tau
\right) \varphi _{n}
\end{equation*}%
so that
\begin{equation*}
u(t)=S_{1}(t)u_{0}+S_{2}(t)u_{1}+S_{3}(t)f.
\end{equation*}%
Using \eqref{Est-MLF} we get that there is a constant $C>0$ such that for
every $t\in \lbrack 0,T]$,
\begin{equation}
\Vert S_{1}(t)u_{0}\Vert _{V_{\gamma }}^{2}\leq 4\sum_{n=1}^{\infty
}|u_{0,n}\lambda _{n}^{\gamma }E_{\alpha ,1}(-\lambda _{n}t^{\alpha
})|^{2}\leq C\Vert u_{0}\Vert _{V_{\gamma }}^{2}.  \label{S1}
\end{equation}%
Using \eqref{IN-L1} we obtain that there is a constant $C>0$ such that for
every $t\in \lbrack 0,T]$ (recall that $\alpha \gamma =1$),
\begin{equation}
\Vert S_{2}(t)u_{1}\Vert _{V_{\gamma }}^{2}\leq 4\sum_{n=1}^{\infty
}|u_{1,n}\lambda _{n}^{\gamma }tE_{\alpha ,2}(-\lambda _{n}t^{\alpha
})|^{2}\leq Ct^{2(1-\alpha \gamma )}\Vert u_{1}\Vert _{L^{2}(X)}^{2}=C\Vert
u_{1}\Vert _{L^{2}(X)}^{2}.  \label{S2}
\end{equation}%
Using \eqref{IN-L1} again and the H\"older inequality, we get that there is
a constant $C>0$ such that for every $t\in \lbrack 0,T]$,
\begin{align}
\Vert S_{3}(t)f\Vert _{V_{\gamma }}\leq & 2\int_{0}^{t}\left(
\sum_{n=1}^{\infty }\left\vert f_{n}(\tau )\lambda _{n}^{\gamma }(t-\tau
)^{\alpha -1}E_{\alpha ,\alpha }(-\lambda _{n}(t-\tau )^{\alpha
})\right\vert ^{2}\right) ^{\frac 12}\;d\tau  \label{S3} \\
\leq & C\int_{0}^{t}(t-\tau )^{\alpha -1-\alpha \gamma }\left(
\sum_{n=1}^{\infty }|f_{n}(\tau )|^{2}\right) ^{\frac 12}\;d\tau  \notag \\
\leq & C\int_{0}^{t}(t-\tau )^{\alpha -2}\Vert f(\cdot,\tau )\Vert
_{L^{2}(X)}\;d\tau  \notag \\
\leq & Ct^{\frac{1}{p}+\alpha-2}\Vert f\Vert _{L^{q}((0,T);L^{2}(X))}.
\notag
\end{align}%
By the assumption on $p$, $1+p\left( \alpha -2\right) >0$. Since the series
in \eqref{sol-spec} converges in $V_{\gamma }$ uniformly for every $t\in
\lbrack 0,T]$, we have shown that $u\in C([0,T];V_{\gamma })$. It also
follows from the estimates \eqref{S1}, \eqref{S2} and \eqref{S3} that there
is a constant $C>0$ such that for every $t\in \lbrack 0,T]$,
\begin{equation}
\Vert u(\cdot ,t)\Vert _{V_{\gamma }}\leq C_{1}\left( \Vert u_{0}\Vert
_{V_{\gamma }}+\Vert u_{1}\Vert _{L^{2}(X)}+t^{\frac{1}{p}+\alpha-2 }\Vert
f\Vert _{L^{q}((0,T);L^{2}(X))}\right) .  \label{B-1}
\end{equation}

\textbf{Step 2}. Next, we show that $u\in C^{1}([0,T];L^{2}(X))$. We notice
that a simple calculation gives that for a.e. $t\in (0,T)$,
\begin{align}
\partial _{t}u(\cdot ,t)=& \sum_{n=1}^{\infty }u_{0,n}\lambda _{n}t^{\alpha
-1}E_{\alpha ,\alpha }(-\lambda _{n}t^{\alpha })\varphi
_{n}+\sum_{n=1}^{\infty }u_{1,n}E_{\alpha ,1}(-\lambda _{n}t^{\alpha
})\varphi _{n}  \label{D1-1} \\
& +\sum_{n=1}^{\infty }\int_{0}^{t}f_{n}(\tau )(t-\tau )^{\alpha
-2}E_{\alpha ,\alpha -1}(-\lambda _{n}(t-\tau )^{\alpha })\;d\tau \varphi
_{n}  \notag \\
=:& S_{1}^{\prime }(t)u_{0}+S_{2}^{\prime }(t)u_{1}+S_{3}^{\prime }(t)f.
\notag
\end{align}%
Proceeding as in Step 1, on account of (\ref{IN-L2}), we get the following
estimates:
\begin{equation*}
\Vert S_{1}^{\prime }(t)u_{0}\Vert _{L^{2}(X)}\leq C\Vert u_{0}\Vert
_{V_{\gamma }}\;\mbox{ and }\;\Vert S_{2}^{\prime }(t)u_{1}\Vert
_{L^{2}(X)}\leq C\Vert u_{1}\Vert _{L^{2}(X)},\;\forall\;t\in [0,T],
\end{equation*}%
and
\begin{equation}
\Vert S_{3}^{\prime }(t)f\Vert _{L^{2}(X)}\leq Ct^{\frac{1}{p}+\alpha-2
}\Vert f\Vert _{L^{q}((0,T);L^{2}(X))},\;\;\forall\;t\in [0,T].  \label{B1-1}
\end{equation}%
It thus follows from these estimates that there is a constant $C>0$ such
that for every $t\in \lbrack 0,T]$,
\begin{equation}
\Vert \partial _{t}u(\cdot ,t)\Vert _{L^{2}(X)}\leq C_{1}\left( \Vert
u_{0}\Vert _{V_{\gamma }}+\Vert u_{1}\Vert _{L^{2}(X)}+t^{\frac{1}{p}%
+\alpha-2 }\Vert f\Vert _{L^{q}((0,T);L^{2}(X))}\right) .  \label{B-2}
\end{equation}%
Since the series \eqref{D1-1} converges in $L^{2}(X)$ uniformly for every
$t\in \lbrack 0,T]$, it follows that $u\in C^{1}([0,T];L^{2}(X))$. In
addition \eqref{EST-1} follows from \eqref{B-1} and \eqref{B-2}.

\textbf{Step 3}. Next, we prove that $\mathbb{D}_{t}^{\alpha }u\in
C([0,T];V_{-\gamma })$. It follows from \eqref{sol-spec} that (using also some formulas of fractional derivatives of the Mittag-Leffler functions)
\begin{align}
\mathbb{D}_{t}^{\alpha }u(\cdot ,t)=& -\sum_{n=1}^{\infty }u_{0,n}\lambda
_{n}E_{\alpha ,1}(-\lambda _{n}t^{\alpha })\varphi _{n}-\sum_{n=1}^{\infty
}u_{1,n}\lambda _{n}tE_{\alpha ,2}(-\lambda _{n}t^{\alpha })\varphi _{n}
\label{dt-al} \\
& -\sum_{n=1}^{\infty }\left( \int_{0}^{t}f_{n}(\tau )\lambda _{n}(t-\tau
)^{\alpha -1}E_{\alpha ,\alpha }(-\lambda _{n}(t-\tau )^{\alpha })\;d\tau
\right) \varphi _{n}+f(\cdot ,t)  \notag \\
& =-Au(\cdot ,t)+f(\cdot ,t).  \notag
\end{align}%
Using \eqref{Est-MLF} and \eqref{IN-L1}, we get the following estimates
(recall that $\lambda _{1}>0$ and $1<2\gamma =\frac{2}{\alpha }<2$):
\begin{align}
\left\Vert \sum_{n=1}^{\infty }u_{0,n}\lambda _{n}E_{\alpha ,1}(-\lambda
_{n}t^{\alpha })\varphi _{n}\right\Vert _{V_{-\gamma }} =\left\Vert
\sum_{n=1}^{\infty }u_{0,n}\lambda _{n}^{1-\gamma }E_{\alpha ,1}(-\lambda
_{n}t^{\alpha })\varphi _{n}\right\Vert _{L^{2}(X)}  \label{D1} 
 \leq C\lambda _{1}^{1-2\gamma }\Vert u_{0}\Vert _{V_{\gamma }},  
\end{align}%
and
\begin{align}
\left\Vert \sum_{n=1}^{\infty }u_{1,n}\lambda _{n}tE_{\alpha ,2}(-\lambda
_{n}t^{\alpha })\varphi _{n}\right\Vert _{V_{-\gamma }} =\left\Vert
\sum_{n=1}^{\infty }u_{1,n}\lambda _{n}^{1-\gamma }tE_{\alpha ,2}(-\lambda
_{n}t^{\alpha })\varphi _{n}\right\Vert _{L^{2}(X)}  \label{D2} 
 \leq Ct^{2-\alpha }\Vert u_{1}\Vert _{L^{2}(X)}.  
\end{align}%
Similarly on account of (\ref{IN-L1}), we can deduce that
\begin{equation}
\left\Vert \sum_{n=1}^{\infty }\left( \int_{0}^{t}f_{n}(\tau )\lambda
_{n}(t-\tau )^{\alpha -1}E_{\alpha ,\alpha }(-\lambda _{n}(t-\tau )^{\alpha
})\;d\tau \right) \varphi _{n}\right\Vert _{V_{-\gamma }}\leq Ct^{\frac
1p}\Vert f\Vert _{L^{q}((0,T);L^{2}(X))}.  \label{D3}
\end{equation}%
Since the series \eqref{dt-al} converges in $V_{-\gamma }$ uniformly in $%
[0,T]$, we can conclude that $\mathbb{D}_{t}^{\alpha }u\in
C([0,T];V_{-\gamma })$ since $f\in C\left( [0,T];V_{-\gamma }\right) $. The
estimate \eqref{EST-1-3} then follows from \eqref{D1}, \eqref{D2}, \eqref{D3}
on account of (\ref{dt-al}).

\textbf{Step 4}. Since $\mathbb{D}_{t}^{\alpha }u(\cdot,t)\in V_{-\gamma },$
$Au(\cdot,t)\in V_{-1/2}\subset V_{-\gamma }$ and $f(\cdot,t)\in L^{2}(X)$
for a.e. $t\in (0,T)$, then taking the duality product in (\ref{dt-al}) we
get the variational identity \eqref{Var-I}.

\textbf{Step 5}. Using \eqref{sol-spec} and \eqref{D1-1}, we get that
\begin{equation*}
u(\cdot ,0)=\sum_{n=1}^{\infty }u_{0,n}\varphi _{n}=u_{0}\;\mbox{ and }%
\;\partial _{t}u(\cdot ,0)=\sum_{n=1}^{\infty }u_{1,n}\varphi _{n}=u_{1},
\end{equation*}
and we have shown \eqref{Ini-0}. Now we check if \eqref{ini} is also
satisfied. By \eqref{sol-spec}, for $t\ll 1$, we have%
\begin{equation}
u\left( \cdot ,t\right) -u_{0}=\sum_{n=1}^{\infty }u_{0,n}\Big( E_{\alpha
,1}(-\lambda _{n}t^{\alpha })-1\Big) \varphi _{n}+S_{2}\left( t\right)
u_{1}+S_{3}\left( t\right) f.  \label{diff-u0}
\end{equation}%
The estimates \eqref{S2} and \eqref{S3} yield
\begin{equation}
\left\Vert S_{3}\left( t\right) f\right\Vert _{V_{\sigma }}\leq \left\Vert
S_{3}\left( t\right) f\right\Vert _{V_{\gamma }}\leq Ct^{\frac{1}{p}%
+\alpha-2 }\Vert f\Vert _{L^{q}((0,T);L^{2}(X))}\rightarrow 0,
\label{diff-s3}
\end{equation}
as $t\to 0^+$ since $V_{\gamma }\hookrightarrow V_{\sigma }$ for $0\leq
\sigma <\gamma =1/\alpha $, and%
\begin{equation*}
\left\Vert S_{2}\left( t\right) u_{1}\right\Vert _{V_{\sigma }}\leq 2\left(
\sum_{n=1}^{\infty }|u_{1,n}\lambda _{n}^{\sigma }tE_{\alpha ,2}(-\lambda
_{n}t^{\alpha })|^{2}\right) ^{\frac 12}\leq Ct^{1-\alpha \sigma }\Vert
u_{1}\Vert _{L^{2}(X)}\rightarrow 0,
\end{equation*}%
as $t\rightarrow 0^{+}$. Notice that 
\begin{align*}
\left\Vert \sum_{n=1}^{\infty }u_{0,n}\Big( E_{\alpha ,1}(-\lambda
_{n}t^{\alpha })-1\Big) \varphi _{n}\right\Vert _{V_{\sigma }} =\left(
\sum_{n=1}^{\infty }\left\vert u_{0,n}\right\vert ^{2}\lambda_n^{2\sigma}
\Big( E_{\alpha ,1}(-\lambda _{n}t^{\alpha })-1\Big) ^{2}\right) ^{\frac 12},
%&\leq \left\Vert u_{0}\right\Vert _{V_{\gamma }}\sup_{n\geqslant 1}%\lambda_n^{2\sigma-2\gamma}\Big\vert %
%E_{\alpha ,1}\left( -\lambda _{n}t^{\alpha }\right) -1\Big\vert \rightarrow
%0,
\end{align*}
and $\lim_{t\to 0^+}\Big( E_{\alpha ,1}(-\lambda _{n}t^{\alpha })-1\Big)=0$ for all $n\in\NN$ and using \eqref{Est-MLF} we have that
\begin{align*}
\sum_{n=1}^{\infty }\left\vert u_{0,n}\right\vert ^{2}\lambda_n^{2\sigma}
\Big( E_{\alpha ,1}(-\lambda _{n}t^{\alpha })-1\Big) ^{2}\le (C+1)^2 \sum_{n=1}^{\infty }\left\vert u_{0,n}\right\vert ^{2}\lambda_n^{2\sigma}=(C+1)^2\|u_0\|_{V_\sigma}^2<\infty,
\end{align*}
for all $0\le t\le T$. It follows from the Lebesgue Dominated Convergence Theorem that
\begin{align*}
\lim_{t\to 0^+}\left\Vert \sum_{n=1}^{\infty }u_{0,n}\Big( E_{\alpha ,1}(-\lambda
_{n}t^{\alpha })-1\Big) \varphi _{n}\right\Vert _{V_{\sigma }}=0.
\end{align*}
Thus, the first statement of (\ref{ini}) is
verified. On account of (\ref{D1-1}), we have%
\begin{equation}
\partial _{t}u\left( \cdot ,t\right) -u_{1}=S_{1}^{\prime
}(t)u_{0}+\sum_{n=1}^{\infty }u_{1,n}\Big( E_{\alpha ,1}(-\lambda
_{n}t^{\alpha })-1\Big) \varphi _{n}+S_{3}^{\prime }(t)f  \label{diff-u1}
\end{equation}%
and once again, by (\ref{B1-1}), it is easy to see that%
\begin{equation}
\left\Vert S_{3}^{\prime }(t)f\right\Vert _{L^{2}\left( X\right)
}\rightarrow 0\text{ as }t\rightarrow 0^{+}.  \label{diff-u1-1}
\end{equation}%
A simple calculation for $1-\frac{1}{\alpha }\geqslant \beta >0$ also yields
from (\ref{IN-L1}), that%
\begin{align}
\left\Vert S_{1}^{\prime }(t)u_{0}\right\Vert _{V_{-\beta }}& =\left(
\sum_{n=1}^{\infty }\left\vert u_{0,n}\right\vert ^{2}\lambda _{n}^{2\gamma
}\lambda _{n}^{2\left( 1-\beta \right) -2\gamma }\left\vert t^{\alpha
-1}E_{\alpha ,\alpha }(-\lambda _{n}t^{\alpha })\right\vert ^{2}\right)
^{\frac 12}  \label{diff-u1-2} \\
& \leq Ct^{\alpha \beta }\left\Vert u_{0}\right\Vert _{V_{\gamma }}  \notag
\\
& \rightarrow 0\;\mbox{ as }\; t\to 0^+. \notag
\end{align}%

Recall that
\begin{align*}
\left \Vert \sum_{n=1}^\infty u_{1,n}(E_{\alpha,1}(-\lambda_n t^\alpha)-1)\varphi_n\right\Vert_{L^2(X)}^2= \sum_{n=1}^\infty \Big|u_{1,n}(E_{\alpha,1}(-\lambda_n t^\alpha)-1)\Big|^2
\end{align*}
and $\lim_{t\to 0^+}\Big( E_{\alpha ,1}(-\lambda _{n}t^{\alpha })-1\Big)=0$ for all $n\in\NN$.
It follows from \eqref{Est-MLF} that
\begin{align*}
 \sum_{n=1}^\infty \Big|u_{1,n}(E_{\alpha,1}(-\lambda_n t^\alpha)-1)\Big|^2\le (C+1)^2\|u_1\|_{L^2(X)}^2,
\end{align*}
for all $0\le t\le T$. Using the Lebesgue Dominated Convergence Theorem again we get that
\begin{equation}
\left\Vert \sum_{n=1}^{\infty }u_{1,n}\Big( E_{\alpha ,1}(-\lambda
_{n}t^{\alpha })-1\Big) \varphi _{n}\right\Vert _{L^{2}\left( X\right) }
\rightarrow 0\;\;\mbox{ as }\;t\to 0^+.  \label{diff-u1-3}
\end{equation}%
Collecting the preceding estimates yields the second statement of \eqref{ini}.

\textbf{Step 6}. Finally we show uniqueness. Let $u,v$ be any two weak
solutions of \eqref{EQ-LI} with the same initial data $u_{0},u_{1}$ and
source $f=f\left( x,t\right) $ and then set $w=u-v$. Then $w$ is a weak
solution of the system%
\begin{equation}
\begin{cases}
\mathbb{D}_{t}^{\alpha }w=-Aw,\;\; & \mbox{ in }\;X\times (0,T), \\
w(\cdot ,0)=0,\;\partial _{t}w(\cdot ,0)=0\; & \mbox{ in }\;X.%
\end{cases}
\label{diff}
\end{equation}%
This system has only a unique solution that is the null solution $w=0$ (see
e.g., \cite{Ba01,KW}). Hence $u=v$. We include a brief argument for the sake
of completeness. In the class of weak solutions in the sense of Definition %
\ref{def-weak} for (\ref{diff}), we may choose the test function $\varphi
=\varphi _{n}\in D\left( A\right) \hookrightarrow V_{\gamma }$ in the
variational identity. Setting $w_{n}\left( t\right) =\langle w(\cdot
,t),\varphi _{n}\rangle _{V_{-\gamma },V_{\gamma }}$ it then follows from (%
\ref{Var-I}) that%
\begin{equation}
\mathbb{D}_{t}^{\alpha }w_{n}\left( t\right) =-\lambda _{n}w_{n}\left(
t\right) \text{, for a.e. }t\in \left( 0,T\right) .  \label{ODE}
\end{equation}%
Since $w\in C\left( \left[ 0,T\right] ;V_{\gamma }\right) $ by the assumed
regularity of the weak solutions $u,v$ (see \eqref{reg-lin}), there actually
holds $w_{n}\left( t\right) =\left( w(\cdot ,t),\varphi _{n}\right) $ as
well as $\lim_{t\rightarrow 0^{+}}\left\Vert w(\cdot ,t)\right\Vert
_{L^{2}\left( X\right) }=0$ owing to the first of (\ref{ini}) and the fact
that $w(\cdot ,0)=0.$ Therefore $w_{n}\left( 0\right) =0$ and the uniqueness
result (see e.g., \cite{Po99}) for the ODE problem (\ref{ODE}) implies that
$w_{n}\left( t\right) =\left( w(\cdot ,t),\varphi _{n}\right) =0$, for all $%
n\geqslant 1$. Since the set $\left\{ \varphi _{n}\right\}_{n\in{\mathbb{N}}%
} $ is also an orthonormal basis in $L^{2}\left( X\right) ,$ we finally
obtain that $w(\cdot ,t)=0$ as claimed. We have shown that the function $u$
given by \eqref{sol-spec} is the unique weak solution of \eqref{EQ-LI}. The
proof of the theorem is finished.\emph{\ }
\end{proof}

We conclude this section with a fundamental result on the existence of
strong solutions.

\begin{definition}
\label{def-strong}We say that a function $u$ is a strong solution of %
\eqref{EQ-LI} on $(0,T)$, for some $T>0$, if it is a weak solution in the
sense of Definition \ref{def-weak} and it further has the regularity
properties%
\begin{equation}
\mathbb{D}_{t}^{\alpha }u\in L^{1}((0,T);L^{2}\left( X\right) )\,\mbox{ and }%
\;u\in L^{1}((0,T);D\left( A\right) ).  \label{reg-strong}
\end{equation}%
In this case the first equation of \eqref{EQ-LI} is satisfied pointwise a.e.
in $X\times \left( 0,T\right) .$
\end{definition}

%\begin{remark}
%\label{rem-strong}
%{\em Set $h_{\alpha }\left( t\right) =\frac{t^{1-\alpha }}{%
%\Gamma (2-\alpha )},$ for $t>0,$ and $h_{\alpha }\left( t\right) =0$ for $%
%t\leq 0$. Notice that $h_{\alpha }\in L^{1}\left( 0,T\right) $, for any $T>0$%
%. We can write $\mathbb{D}_{t}^{\alpha }u=h_{\alpha }\ast \partial _{s}^{2}u$%
%, where $\ast $ denotes the Laplace convolution%
%\begin{equation*}
%\left( g_{1}\ast g_{2}\right) \left( t\right) =\int_{0}^{t}g_{1}\left(
%t-s\right) g_{2}\left( s\right) ds.
%\end{equation*}%
%Application of the Young convolution theorem then yields%
%\begin{equation*}
%\left\Vert \mathbb{D}_{t}^{\alpha }u\right\Vert _{L^{1}\left(
%(0,T);L^{2}\left( X\right) \right) }\leq CT^{2-\alpha }\left\Vert \partial
%_{s}^{2}u\right\Vert _{L^{1}\left( (0,T);L^{2}\left( X\right) \right) },
%\end{equation*}%
%for some $C>0$ depending only on $\alpha \in \left( 1,2\right) .$ Therefore,
%the first property of (\ref{reg-strong}) follows immediately from the second
%property of \eqref{reg-strong}.}
%\end{remark}

We have the following existence result of strong solutions.

\begin{theorem}
\label{theo-strong} Let $\gamma =\frac{1}{\alpha }$ and
\begin{equation*}
f\in W^{1,q}((0,T);L^{2}(X)),\text{ for }\frac{1}{p}+\frac{1}{q}=1\text{,
and }1\leq p<\frac{1}{2-\alpha }.
\end{equation*}%
Let the assumptions of Theorem \ref{thm-main} hold. Then the following
assertions hold.

\begin{enumerate}
\item If $u_{0}\in V_{\gamma }$ and $u_{1}\in L^{2}(X)$, then the system %
\eqref{EQ-LI} has a unique strong solution in the sense of Definition \ref%
{def-strong} that is given exactly by \eqref{sol-spec}. In particular there
is a constant $C>0$ such that for every $t\in (0,T)$,
\begin{align}
& \left\Vert \mathbb{D}_{t}^{\alpha }u(\cdot ,t)\right\Vert _{L^{2}\left(
X\right) }+\left\Vert Au(\cdot ,t)\right\Vert _{L^{2}\left( X\right) }
\label{est-mathbbd} \\
\leq & Ct^{1-\alpha }\left( \Vert u_{0}\Vert _{V_{\gamma }}+\Vert u_{1}\Vert
_{L^{2}(X)}\right)  \notag \\
& +C\left( \Vert f(\cdot ,t)\Vert _{L^{2}(X)}+\Vert f(\cdot ,0)\Vert
_{L^{2}(X)}+Ct^{\frac{1}{p}}\Vert \partial _{t}f\Vert
_{L^{q}((0,T);L^{2}(X))}\right) .  \notag
\end{align}

\item If $u_{0}\in V_{\sigma }$ for some $\sigma >\frac{1}{\alpha }$ and $%
u_{1}\in V_{\beta }$ for some $\beta >\frac{\alpha -1}{\alpha },$ then the
strong solution of \eqref{EQ-LI} also satisfies $u\in
W^{2,1}((0,T);L^{2}(X)) $ and there is a constant $C>0$ such that%
\begin{align}
\left\Vert \partial _{t}^{2}u\right\Vert _{L^{1}((0,T);L^{2}\left( X\right)
)}\leq & C\left( T^{\alpha \sigma -1}\Vert u_{0}\Vert _{V_{\sigma
}}+T^{1-\alpha (1-\beta )}\Vert u_{1}\Vert _{V_{\beta }}\right.
\label{est-strong} \\
& \qquad \left. +T^{\frac{1}{p}+\alpha -1}\Vert \partial _{t}f\Vert
_{L^{q}((0,T);L^{2}(X))}+T^{\alpha -1}\Vert f(\cdot ,0)\Vert
_{L^{2}(X)}\right) .  \notag
\end{align}%
In addition, the initial conditions are also satisfied in the following
(strong) sense:
\begin{equation}
\lim_{t\rightarrow 0^{+}}\left\Vert u(\cdot ,t)-u_{0}\right\Vert _{V_{\gamma
}}=0,\lim_{t\rightarrow 0^{+}}\left\Vert \partial _{t}u(\cdot
,t)-u_{1}\right\Vert _{L^{2}\left( X\right) }=0.  \label{ini-strong}
\end{equation}
\end{enumerate}
\end{theorem}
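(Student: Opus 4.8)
\textbf{Proof plan for Theorem \ref{theo-strong}.}

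The plan is to exploit the explicit series representation \eqref{sol-spec}, whose validity as the unique weak solution is already guaranteed by Theorem \ref{theo-weak}, and to upgrade the regularity estimates of Steps 1--3 of that proof to the stronger $L^2(X)$ and $D(A)$ norms. For part (1), I would first integrate by parts in the convolution term $S_3(t)f$ to trade the singular kernel $(t-\tau)^{\alpha-2}$ appearing in $\mathbb{D}_t^\alpha S_3 f$ for the derivative $\partial_\tau f$; concretely, write $\int_0^t f_n(\tau)\lambda_n(t-\tau)^{\alpha-1}E_{\alpha,\alpha}(-\lambda_n(t-\tau)^\alpha)\,d\tau$, use \eqref{3} to recognize the $t$-derivative of $(t-\tau)^{\alpha-1}E_{\alpha,\alpha}$, and integrate by parts so that the boundary term produces $f_n(0)$ times a Mittag-Leffler factor and the remaining integral carries $\partial_\tau f_n(\tau)$ against the less singular kernel $(t-\tau)^{\alpha-1}E_{\alpha,\alpha-1}$ type term. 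Combined with \eqref{Est-MLF} (which gives $|\lambda_n E_{\alpha,1}(-\lambda_n t^\alpha)|\le C t^{-\alpha}$ after factoring, whence the $t^{1-\alpha}$ prefactor on the data terms) and \eqref{IN-L1}, I would bound $\|\mathbb{D}_t^\alpha u(\cdot,t)\|_{L^2(X)}$ by the right-hand side of \eqref{est-mathbbd}; since $\mathbb{D}_t^\alpha u = -Au + f$ pointwise by \eqref{dt-al}, the same bound controls $\|Au(\cdot,t)\|_{L^2(X)}$. Integrability of these quantities on $(0,T)$ follows because $t^{1-\alpha}\in L^1(0,T)$ (as $1<\alpha<2$) and $t^{1/p}$ is bounded, which establishes \eqref{reg-strong}; uniqueness is inherited from Theorem \ref{theo-weak}.

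For part (2), the extra smoothing comes from the stronger hypotheses $u_0\in V_\sigma$ with $\sigma>1/\alpha$ and $u_1\in V_\beta$ with $\beta>(\alpha-1)/\alpha$. I would differentiate the series \eqref{D1-1} once more in $t$ to get a formal expression for $\partial_t^2 u(\cdot,t)$, using \eqref{Est-MLF2} to identify $\frac{d}{dt}[t^{\alpha-1}E_{\alpha,\alpha}(-\lambda_n t^\alpha)]=t^{\alpha-2}E_{\alpha,\alpha-1}(-\lambda_n t^\alpha)$ and $\frac{d}{dt}E_{\alpha,1}(-\lambda_n t^\alpha)=-\lambda_n t^{\alpha-1}E_{\alpha,\alpha}(-\lambda_n t^\alpha)$ and the analogous formula for the source term. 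The key point is that the term coming from $u_0$ carries $\lambda_n u_{0,n}\,t^{\alpha-2}E_{\alpha,\alpha-1}$, and using \eqref{IN-L1} with $\beta$ replaced by $\sigma$ (legitimate since $\alpha\sigma>1$) one gets a kernel $t^{\alpha-2}\cdot t^{-\alpha\sigma}\cdot\lambda_n^{\sigma}$-type estimate, i.e. $\|\cdot\|_{L^2}\le C t^{\alpha-2-\alpha(\sigma-?)}\|u_0\|_{V_\sigma}$ with exponent strictly greater than $-1$, hence $L^1(0,T)$ in time, yielding the $T^{\alpha\sigma-1}$ factor; similarly the $u_1$-term produces $T^{1-\alpha(1-\beta)}$ with the condition $\beta>(\alpha-1)/\alpha$ precisely guaranteeing the time exponent exceeds $-1$; the source term, after one integration by parts as in part (1), yields the $T^{1/p+\alpha-1}$ and $T^{\alpha-1}\|f(\cdot,0)\|_{L^2}$ contributions. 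Summing these over $n$ (each partial sum being Cauchy in $L^1((0,T);L^2(X))$) gives $u\in W^{2,1}((0,T);L^2(X))$ and the bound \eqref{est-strong}. Finally, \eqref{ini-strong} follows by revisiting \eqref{diff-u0} and \eqref{diff-u1}: the stronger norm $\|u(\cdot,t)-u_0\|_{V_\gamma}$ (rather than $V_\sigma$ with $\sigma<\gamma$) is controlled because now $u_0\in V_\sigma\hookrightarrow V_\gamma$ with room to spare, so dominated convergence applies with the dominating function built from $\|u_0\|_{V_\gamma}$; likewise $\|\partial_t u(\cdot,t)-u_1\|_{L^2(X)}\to 0$ uses $u_1\in V_\beta\hookrightarrow L^2(X)$ together with the estimate \eqref{diff-u1-2} rewritten in the $L^2$ rather than $V_{-\beta}$ norm, which is where the hypothesis $\beta>(\alpha-1)/\alpha$ reenters to make the exponent of $t$ positive.

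The main obstacle I anticipate is the bookkeeping of the singular-kernel estimates near $t=0$ for the convolution (source) terms: one must integrate by parts carefully to move a derivative onto $f$, track the resulting boundary term $f(\cdot,0)$, and verify in each of \eqref{est-mathbbd} and \eqref{est-strong} that every time exponent that appears is strictly above the integrability threshold $-1$ (for the $L^1$-in-time statements) or nonnegative (for the pointwise-in-$t$ statements and the limits in \eqref{ini-strong}). A secondary technical point is justifying term-by-term differentiation of the series for $\partial_t^2 u$ — this requires showing the differentiated series converges in $L^1((0,T);L^2(X))$, which is exactly what the estimates above deliver, but the order of the arguments (first establish uniform convergence of partial sums in the relevant Bochner space, then identify the limit with the distributional derivative) must be respected. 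Everything else is a routine application of Theorem \ref{thm-main}, Lemma \ref{lem-INE}, and the Mittag-Leffler identities \eqref{Est-MLF2}--\eqref{3}.
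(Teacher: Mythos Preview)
Your plan matches the paper's proof: integrate by parts in the source convolution via $\frac{d}{d\tau}E_{\alpha,1}(-\lambda_n(t-\tau)^\alpha)=\lambda_n(t-\tau)^{\alpha-1}E_{\alpha,\alpha}(-\lambda_n(t-\tau)^\alpha)$ (this is \eqref{Est-MLF2} rather than \eqref{3}, and the remaining integrand after the integration by parts is $f_n'(\tau)E_{\alpha,1}(-\lambda_n(t-\tau)^\alpha)$, not a $(t-\tau)^{\alpha-1}E_{\alpha,\alpha-1}$-type kernel), then estimate each piece with Lemma \ref{lem-INE}, and for part (b) differentiate once more and check the time exponents. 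One small bookkeeping swap in your sketch of \eqref{ini-strong}: it is $u_1\in V_\beta$ that makes $\|S_2(t)u_1\|_{V_\gamma}\le Ct^{\alpha\beta}\|u_1\|_{V_\beta}\to 0$ in the first limit, while $u_0\in V_\sigma$ with $\alpha\sigma>1$ is what makes $\|S_1'(t)u_0\|_{L^2}\le Ct^{\alpha\sigma-1}\|u_0\|_{V_\sigma}\to 0$ in the second.
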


\begin{proof}
(a) First, since $f\in W^{1,q}((0,T);L^{2}(X))$, then $f\in
C([0,T];L^{2}(X))\hookrightarrow C([0,T];V_{-\gamma})$. Second, let $%
u_{0}\in V_{\gamma }$, $u_{1}\in L^{2}(X)$ and $u$ the weak solution of %
\eqref{EQ-LI}. Recall that $\mathbb{D}^{\alpha }u=-Au+f$ is given by %
\eqref{dt-al}. Using \eqref{Est-MLF2}, we get the following estimates:
\begin{align}
\left\Vert \sum_{n=1}^{\infty }u_{0,n}\lambda _{n}E_{\alpha ,1}(-\lambda
_{n}t^{\alpha })\varphi _{n}\right\Vert _{L^{2}(X)}& =\left\Vert
\sum_{n=1}^{\infty }\lambda _{n}^{\gamma }u_{0,n}\lambda _{n}^{1-\gamma
}E_{\alpha ,1}(-\lambda _{n}t^{\alpha })\varphi _{n}\right\Vert _{L^{2}(X)}
\label{D1-SS} \\
& \leq Ct^{1-\alpha }\Vert u_{0}\Vert _{V_{\gamma }},  \notag
\end{align}%
and
\begin{equation}
\left\Vert \sum_{n=1}^{\infty }u_{1,n}\lambda _{n}tE_{\alpha ,2}(-\lambda
_{n}t^{\alpha })\varphi _{n}\right\Vert _{L^{2}(X)}\leq Ct^{1-\alpha }\Vert
u_{1}\Vert _{L^{2}(X)}.  \label{D2-SS}
\end{equation}%
Using \eqref{Est-MLF2} and integrating by parts, we get that
\begin{align*}
& \int_{0}^{t}f_{n}(\tau )\lambda _{n}(t-\tau )^{\alpha -1}E_{\alpha ,\alpha
}(-\lambda _{n}(t-\tau )^{\alpha })\;d\tau \varphi _{n} \\
=& \int_{0}^{t}f_{n}(\tau )\frac{d}{d\tau }\left( -E_{\alpha ,1}(-\lambda
_{n}(t-\tau )^{\alpha })\right) \;d\tau \varphi _{n} \\
=& f_{n}(t)\varphi _{n}+f_{n}(0)E_{\alpha ,1}(-\lambda _{n}t^{\alpha
})\varphi _{n} 
+\int_{0}^{t}f_{n}^{\prime }(\tau )E_{\alpha ,1}(-\lambda _{n}(t-\tau
)^{\alpha }\;d\tau \varphi _{n}.
\end{align*}%
Therefore, proceeding as above we get that

\begin{align}
& \left\Vert \sum_{n=1}^{\infty }\left( \int_{0}^{t}f_{n}(\tau )\lambda
_{n}(t-\tau )^{\alpha -1}E_{\alpha ,\alpha }(-\lambda _{n}(t-\tau )^{\alpha
})\;d\tau \right) \varphi _{n}\right\Vert _{L^{2}(X)}  \label{D3-SS} \\
\leq & \Vert f(\cdot ,t)\Vert _{L^{2}(X)}+\Vert f(\cdot ,0)\Vert
_{L^{2}(X)}+Ct^{\frac{1}{p}}\Vert \partial _{t}f\Vert
_{L^{q}((0,T);L^{2}(X))}.  \notag
\end{align}%
It follows from \eqref{D1-SS}, \eqref{D2-SS}, \eqref{D3-SS} and the
assumptions that $\mathbb{D}_{t}^{\alpha }u\in L^{1}((0,T);L^{2}(X))$ since the function $%
f\in W^{1,q}\left( (0,T);L^{2}(X)\right) $. The estimate \eqref{est-mathbbd}
then follows from \eqref{D1-SS}, \eqref{D2-SS}, \eqref{D3-SS} on account of %
\eqref{dt-al}. Since $Au=-\mathbb{D}_{t}^{\alpha }u+f$, we have also shown
that $u\in L^{1}((0,T);D(A))$. Thus $u$ is the unique strong solution of %
\eqref{EQ-LI}.

(b) Now assume that $u_{0}\in V_{\sigma }$ for some $\sigma >\frac{1}{\alpha
}$ and $u_{1}\in V_{\beta }$ for some $\beta >\frac{\alpha -1}{\alpha },$
and let $u$ be the strong solution of \eqref{EQ-LI}. Notice that it follows from the
estimate \eqref{EST-1} that $u\in W^{1,1}((0,T);L^{2}(X))$. From a simple
calculation, for a.e. $t\in (0,T)$, we have%
\begin{align}
\partial _{t}^{2}u(\cdot ,t)=& \sum_{n=1}^{\infty }u_{0,n}\lambda
_{n}t^{\alpha -2}E_{\alpha ,\alpha -1}(-\lambda _{n}t^{\alpha })\varphi
_{n}-\sum_{n=1}^{\infty }u_{1,n}\lambda _{n}E_{\alpha ,\alpha }(-\lambda
_{n}t^{\alpha })\varphi _{n}  \label{DD-2} \\
& +\sum_{n=1}^{\infty }\int_{0}^{t}f_{n}^{^{\prime }}(\tau )(t-\tau
)^{\alpha -2}E_{\alpha ,\alpha -1}(-\lambda _{n}(t-\tau )^{\alpha })d\tau
\varphi _{n}  \notag \\
&+\sum_{n=1}^\infty
f_n(0)t^{\alpha-2}E_{\alpha,\alpha-1}(-\lambda_nt^\alpha)\varphi_n  \notag \\
=:& S_{1}^{\prime \prime }(t)u_{0}+S_{2}^{\prime \prime
}(t)u_{1}+S_{3}^{\prime \prime }(t)f +\sum_{n=1}^\infty
f_n(0)t^{\alpha-2}E_{\alpha,\alpha-1}(-\lambda_nt^\alpha)\varphi_n.  \notag
\end{align}%
Exploiting \eqref{IN-L2}, we have%
\begin{align*}
\Vert S_{1}^{\prime \prime }(t)u_{0}\Vert _{L^{2}(X)}\leq & 4\left(
\sum_{n=1}^{\infty }\left|u_{0,n}\lambda _{n}^{\sigma }\right|^{2}\left\vert
\lambda _{n}^{1-\sigma }t^{\alpha -2}E_{\alpha ,\alpha -1}(-\lambda
_{n}t^{\alpha })\right\vert ^{2}\right) ^{\frac{1}{2}} \leq Ct^{\alpha
\sigma -2}\Vert u_{0}\Vert _{V_{\sigma }}.
\end{align*}%
Since $\sigma >\frac{1}{\alpha }$, we have that $\alpha \sigma -1>0$ and
this implies%
\begin{equation*}
\int_{0}^{T}\Vert S_{1}^{\prime \prime }(t)u_{0}\Vert _{L^{2}(X)}dt\leq
CT^{\alpha \sigma -1}\Vert u_{0}\Vert _{V_{\sigma }}.
\end{equation*}%
Using \eqref{IN-L1} we can deduce that
\begin{equation*}
\Vert S_{2}^{\prime \prime }(t)u_{1}\Vert _{L^{2}(X)}\leq 4\left(
\sum_{n=1}^{\infty }|u_{1,n}\lambda ^{\beta }|^{2}\left\vert \lambda
_{n}^{1-\beta }E_{\alpha ,\alpha }(-\lambda _{n}t^{\alpha })\right\vert
^{2}\right) ^{\frac{1}{2}}\leq Ct^{\alpha \beta -\alpha }\Vert u_{1}\Vert
_{V_{\beta }}.
\end{equation*}%
Since $\beta >\frac{\alpha -1}{\alpha }$, we have that $\alpha \beta -\alpha
>-1$ and this also yields%
\begin{equation*}
\int_{0}^{T}\Vert S_{2}^{\prime \prime }(t)u_{1}\Vert _{L^{2}(X)}dt\leq
CT^{1-\alpha (1-\beta )}\Vert u_{1}\Vert _{V_{\beta }}.
\end{equation*}%
For the third term, using \eqref{IN-L2} and the H\"older inequality, we find%
\begin{equation*}
\Vert S_{3}^{\prime \prime }(t)f\Vert _{L^{2}(X)}\leq Ct^{\frac 1p-2+\alpha
}\Vert \partial_t f\Vert _{L^{q}((0,T);L^{2}(X))},
\end{equation*}%
which gives%
\begin{equation*}
\int_{0}^{T}\Vert S_{3}^{\prime \prime }(t)f\Vert _{L^{2}(X)}dt\leq
CT^{\frac 1p+\alpha-1}\Vert \partial_tf\Vert _{L^{q}((0,T);L^{2}(X))}.
\end{equation*}%
For the fourth term, using \eqref{Est-MLF} we get that there is a constant $%
C>0$ such that
\begin{align*}
\left\|\sum_{n=1}^\infty
f_n(0)t^{\alpha-2}E_{\alpha,\alpha-1}(-\lambda_nt^\alpha)\varphi_n\right\|_{L^2(X)}\le C t^{\alpha-2}\|f(\cdot,0)\|_{L^2(X)},
\end{align*}
and this estimate implies that
\begin{align*}
\int_0^T\left\|\sum_{n=1}^\infty
f_n(0)t^{\alpha-2}E_{\alpha,\alpha-1}(-\lambda_nt^\alpha)\varphi_n\right\|_{L^2(X)}\;dt\le CT^{\alpha-1}\|f(\cdot,0)\|_{L^2(X)}.
\end{align*}
It follows from these estimates together with the function $u\in W^{1,1}((0,T);L^{2}(X))$%
, that  $u\in W^{2,1}((0,T);L^{2}(X))$ and one also has the
estimate (\ref{est-strong}).

It remains to verify (\ref{ini-strong}). For this argument, we shall exploit
once again (\ref{diff-u0}) and (\ref{diff-u1}). Recall that $u_{0}\in
V_{\sigma }$ for $\sigma >\gamma $ and $u_{1}\in V_{\beta }$ for $\beta
>1-\gamma $. It first follows from (\ref{diff-s3}) that $\left\Vert
S_{3}\left( t\right) f\right\Vert _{V_{\gamma }}\rightarrow 0$ as $%
t\rightarrow 0^{+}$ and
\begin{align*}
\left\Vert S_{2}\left( t\right) u_{1}\right\Vert _{V_{\gamma }} \leq
2\left( \sum_{n=1}^{\infty }|u_{1,n}\lambda _{n}^{\beta }\lambda
_{n}^{\gamma -\beta }tE_{\alpha ,2}(-\lambda _{n}t^{\alpha })|^{2}\right)
^{\frac 12} 
 \leq Ct^{\alpha \beta }\Vert u_{1}\Vert _{V_{\beta }}\rightarrow 0,\text{ as }%
t\rightarrow 0^{+},
\end{align*}%
whereas proceeding exactly as the proof of \eqref{diff-u1-3} we can deduce that
\begin{align*}
\left\Vert \sum_{n=1}^{\infty }u_{0,n}\Big( E_{\alpha ,1}(-\lambda
_{n}t^{\alpha })-1\Big) \varphi _{n}\right\Vert _{V_{\gamma }} =\left(
\sum_{n=1}^{\infty }\left\vert u_{0,n}\right\vert ^{2}\lambda _{n}^{2\gamma }%
\Big( E_{\alpha ,1}(-\lambda _{n}t^{\alpha })-1\Big) ^{2}\right) ^{\frac 12} 
\rightarrow 0,
\end{align*}
as $t\rightarrow 0^{+}$. Then the first of \eqref{ini-strong} is a simple
corollary of these estimates and the identity (\ref{diff-u0}). Analogously,
for the second statement of \eqref{ini-strong} we may exploit the formula \eqref{diff-u1} together with \eqref{diff-u1-1}, \eqref{diff-u1-3}
and the fact that%
\begin{align*}
\left\Vert S_{1}^{\prime }(t)u_{0}\right\Vert _{L^{2}\left( X\right) }
=\left( \sum_{n=1}^{\infty }\left\vert u_{0,n}\right\vert ^{2}\lambda
_{n}^{2\sigma }\lambda _{n}^{2-2\sigma }\left\vert t^{\alpha -1}E_{\alpha
,\alpha }(-\lambda _{n}t^{\alpha })\right\vert ^{2}\right) ^{1/2} 
 \leq Ct^{\alpha \sigma -1}\left\Vert u_{0}\right\Vert _{V_{\sigma }},
\end{align*}%
which goes to zero as $t\rightarrow 0^{+}$ since $\alpha\sigma-1 >0 $. The
proof of the theorem is now finished.
\end{proof}

\section{The semi-linear problem}

\label{main}

Once again, we recall that $X$ is a relatively compact metric space and $%
1<\alpha <2$. We focus our attention on the semi-linear problem%
\begin{equation}
\begin{cases}
\mathbb{D}_{t}^{\alpha }u\left(x, t\right) +Au\left(x, t\right) =f(u\left(x,
t\right) )\;\; & \mbox{ in }\;X\times (0,T), \\
u(\cdot ,0)=u_{0},\;\;\partial _{t}u(\cdot ,0)=u_{1} & \mbox{ in }\;X.%
\end{cases}
\label{EQ-NL}
\end{equation}%
Given an operator $A$ (see Theorem \ref{thm-main}), we will define a
critical value according to whether $q_{A}>2$ or $1<q_{A}\leq 2$ (see (\ref%
{Sobolev})). More precisely, in what follows we shall consider the following
two important cases.

\noindent \textbf{Case (i)}: If $q_{A}>2$, we define%
\begin{equation}
\alpha _{0}:=\frac{1}{\theta _{A}}=\frac{2\left( q_{A}-1\right) }{q_{A}}\in
\left( 1,2\right) .  \label{critical-value}
\end{equation}%
With the assumptions of Theorem \ref{thm-main}, the following embeddings
then hold:%
\begin{align}
V_{1/\alpha }& \hookrightarrow L^{\infty }\left( X\right) ,\text{ for }%
\alpha \in \left( 1,\alpha _{0}\right) ,  \label{embed-supnorm} \\
V_{1/\alpha _{0}}& \hookrightarrow L^{2r_{\ast }}\left( X\right) ,\text{ for
any }r_{\ast }\in \left( 1,\infty \right) ,  \label{embed-r1} \\
V_{1/\alpha }& \hookrightarrow L^{2r_{\ast }}\left( X\right) ,\text{ for }%
\alpha >\alpha _{0}\text{ and }r_{\ast }=\frac{\alpha \theta _{A}}{\alpha
\theta _{A}-1}>1.  \label{embed-r2}
\end{align}%
\textbf{Case (ii)}: If $1<q_{A}\leq 2$ (and so $\theta _{A}\geqslant 1$),
the following embedding holds for all $\alpha \in \left( 1,2\right) $:%
\begin{equation}
V_{1/\alpha }\hookrightarrow L^{2r_{\ast }}\left( X\right) ,\text{ for }%
r_{\ast }=\frac{\alpha \theta _{A}}{\alpha \theta _{A}-1}>1.
\label{embed-r3}
\end{equation}

Our notion of weak solution to the system \eqref{EQ-NL} changes slightly
according to these two cases. In what follows, let $u_{0}$, $u_{1}$ and $f$
be given functions and set once again $\gamma =1/\alpha \in \left(
1/2,1\right) $.

\begin{definition}[\textbf{The case (i) when $\protect\alpha \in \left( 1,%
\protect\alpha _{0}\right) $}]
\label{def-weak-nl} A function $u$ is said to be a (locally-defined) weak
solution of \eqref{EQ-NL} on $(0,T)$, for some $T>0$, if the following
assertions hold.

\begin{itemize}
\item Regularity:
\begin{equation}
\begin{cases}
u\in C([0,T];V_{\gamma })\cap C^{1}([0,T];L^{2}(X)), \\
\mathbb{D}_{t}^{\alpha }u\in C([0,T];V_{-\gamma }).%
\end{cases}
\label{regularity-nl}
\end{equation}

\item Initial conditions:
\begin{equation}  \label{ini-nl-0}
u(\cdot,0)=u_0,\;\;\;\;\partial_t(\cdot,0)=u_1\;\mbox{ a.e. in }\; X,
\end{equation}
and
\begin{equation}
\lim_{t\rightarrow 0^{+}}\left\Vert u(\cdot ,0)-u_{0}\right\Vert _{V_{\sigma
}}=0\text{, }\lim_{t\rightarrow 0^{+}}\left\Vert \partial _{t}u(\cdot
,0)-u_{1}\right\Vert _{V_{-\beta }}=0,  \label{ini-nl}
\end{equation}%
for some
\begin{align*}
\frac{1}{\alpha }>\sigma\geqslant 0\;\mbox{ and }\; 1-\frac{1}{\alpha }%
\geqslant \beta >0.
\end{align*}

\item Variational identity: for every $\varphi \in V_{\gamma }$ for a.e.
$t\in (0,T)$, we have
\begin{equation}
\langle \mathbb{D}_{t}^{\alpha }u(\cdot ,t),\varphi \rangle _{V_{-\gamma
},V_{\gamma }}+\mathcal{E}_{A}(u(\cdot ,t),\varphi )=\left( f(u(\cdot
,t)),\varphi \right) .  \label{Var-I-nl}
\end{equation}
\end{itemize}

If the above properties hold for any $T>0$, then we say that $u$ is a global
weak solution.
\end{definition}

\begin{definition}[\textbf{The case (i) when $\protect\alpha \in \lbrack
\protect\alpha _{0},2)$ and case (ii) for all }$\protect\alpha \in \left(
1,2\right) $]
\label{def-weak-nl2} Let%
\begin{equation*}
1\leq p<\frac{1}{2-\alpha }
\end{equation*}%
and consider the dual conjugate $q$ of $p,$
\begin{equation*}
q=\frac{p}{p-1}\in (\frac{1}{\alpha -1},\infty ].
\end{equation*}%
A function $u$ is said to be a (locally-defined) weak solution of %
\eqref{EQ-NL} on $(0,T)$, for some $T>0$, if the following assertions hold.

\begin{itemize}
\item Regularity:%
\begin{equation}
\begin{cases}
u\in C([0,T];V_{\gamma })\cap C^{1}([0,T];L^{2}(X))\cap L^{rq}\left(
(0,T);L^{2r}\left( X\right) \right) , \\
\mathbb{D}_{t}^{\alpha }u\in C([0,T];V_{-\gamma })\oplus L^{q}\left(
(0,T);L^{2}\left( X\right) \right) .%
\end{cases}
\label{regularity-nl2}
\end{equation}%
for some $r>1.$

\item The initial conditions \eqref{ini-nl-0}, \eqref{ini-nl} and
variational identity \eqref{Var-I-nl} are satisfied.
\end{itemize}

If the above properties hold for any $T>0$, then we say that $u$ is a global
weak solution.
\end{definition}

From now on, we shall also refer to any weak solution that satisfies the
energy identity (\ref{Var-I-nl}) as an \emph{energy solution}.

As far as the nonlinearity $f\in C^{1}\left( \mathbb{R}\right) $ is
concerned, we consider the following assumptions (not necessarily
simultaneously).

\begin{itemize}
\item[(\textbf{Hf1})] There exist two constants $C>0$ and $r>1$ such that
\begin{equation}
f(0)=0\;\text{and }\left\vert f^{^{\prime }}\left( s\right) \right\vert \leq
C\left\vert s\right\vert ^{r-1},\text{ for all }s\in {\mathbb{R}}.
\label{grow-f1}
\end{equation}%
We assume the following precise conditions on the exponent $r$ under the
conditions of \textbf{Case (i)}:
\begin{equation}
\begin{array}{ll}
\text{(a) when }\alpha =\alpha _{0}, & r:=r_{\ast }\text{ is any number in }%
\left( 1,\infty \right) , \\
\text{(b) when }\alpha >\alpha _{0}, & r:=r_{\ast }=\frac{\theta _{A}\alpha
}{\theta _{A}\alpha -1}>1.%
\end{array}
\label{grow-f2}
\end{equation}%
In \textbf{Case (ii)}, we shall require that%
\begin{equation}
r:=r_{\ast }=\frac{\theta _{A}\alpha }{\theta _{A}\alpha -1}>1\text{ for all
}\alpha \in \left( 1,2\right) .  \label{grow-f3}
\end{equation}

\item[(\textbf{Hf2})] There exist two monotone increasing (real-valued)
functions $Q_{1},Q_{2}\geqslant 0$ such that%
\begin{equation*}
f(0)=0\;\text{and }\left\vert f^{^{\prime }}\left( s\right) \right\vert \leq
Q_{1}\left( \left\vert s\right\vert \right) ,\text{ }\left\vert f\left(
s\right) \right\vert \leq Q_{2}\left( \left\vert s\right\vert \right) \text{%
, for all }s\in {\mathbb{R}}.
\end{equation*}
\end{itemize}

\begin{remark}
\emph{Firstly, we notice that \textbf{(Hf2)} is not a growth condition.
Secondly, we also mention that it can be replaced by the following simple
condition:
\begin{equation*}
f\in C^{1}({\mathbb{R}})\;\mbox{ and }\;f(0)=0.
\end{equation*}%
Indeed, one can set in \textbf{(Hf2)},
\begin{equation*}
Q_{1}\left( \xi\right) =\sup_{0\leq |s|\leq \xi}\left\vert f^{^{\prime
}}\left( s\right) \right\vert +\varepsilon \xi\;\mbox{ and }\;Q_{2}\left(
\xi\right) =\sup_{0\leq |s|\leq \xi}\left\vert f\left( s\right) \right\vert
+\varepsilon \xi,\;\;\;\xi\ge 0,
\end{equation*}%
for some $\varepsilon >0.$ }
\end{remark}

The following well-posedness result is our first main result.

\begin{theorem}
\label{theo-loc} Let $\gamma =\frac{1}{\alpha }$, $u_{0}\in V_{\gamma }$ and
$u_{1}\in L^{2}(X)$ and let the assumptions of Theorem \ref{thm-main} hold
for the operator $A$. Consider the following cases:

\begin{itemize}
\item[\emph{(I)}] For \textbf{Case (i) }when\textbf{\ }$\alpha \in \left(
1,\alpha _{0}\right) $, suppose that $f$ satisfies \emph{(\textbf{Hf2})}.
Then there is a time $T^{\star }>0$ (depending only on $\left(
u_{0},u_{1}\right) $) such that the system \eqref{EQ-NL} has a unique weak
solution on $(0,T^{\star })$ in the sense of Definition \ref{def-weak-nl}.

\item[\emph{(II)}] For \textbf{Case (i) }when\textbf{\ }$\alpha \in \lbrack
\alpha _{0},2)$ (provided that the interval is non-empty) and \textbf{Case
(ii)}, respectively, suppose that $f$ satisfies \emph{(\textbf{Hf1})}. Then
there is a time $T^{\star }>0$ (depending only on $\left( u_{0},u_{1}\right)
$) such that the system \eqref{EQ-NL} has a unique weak solution on $%
(0,T^{\star })$ in the sense of Definition \ref{def-weak-nl2}.
\end{itemize}

In either case, the solution is given by%
\begin{align}
u(\cdot ,t)=& \sum_{n=1}^{\infty }u_{0,n}E_{\alpha ,1}(-\lambda
_{n}t^{\alpha })\varphi _{n}+\sum_{n=1}^{\infty }u_{1,n}tE_{\alpha
,2}(-\lambda _{n}t^{\alpha })\varphi _{n}  \label{fixed-point} \\
& +\sum_{n=1}^{\infty }\left( \int_{0}^{t}f_{n}(u(\tau ))(t-\tau )^{\alpha
-1}E_{\alpha ,\alpha }(-\lambda _{n}(t-\tau )^{\alpha })\;d\tau \right)
\varphi _{n},  \notag
\end{align}%
where we have set $u_{0,n}=(u_{0},\varphi _{n})$, $u_{1,n}=(u_{1},\varphi
_{n})$ and $f_{n}(u(t))=(f(u(\cdot ,t)),\varphi _{n})$.
\end{theorem}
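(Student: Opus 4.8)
The plan is to prove Theorem \ref{theo-loc} by a fixed point argument applied to the nonlinear integral operator suggested by the variational formulation of the linear problem. Concretely, given $u_0\in V_\gamma$ and $u_1\in L^2(X)$, define the map $\Phi$ on a suitable closed ball of a complete metric space of functions on $[0,T^\star]$ by letting $\Phi(v)$ be the unique weak solution (in the sense of Theorem \ref{theo-weak}) of the linear problem $\mathbb{D}_t^\alpha u+Au=f(v)$ with the same initial data; by the representation formula \eqref{sol-spec} this is precisely the right-hand side of \eqref{fixed-point} with $v$ in place of $u$ in the Duhamel term. A fixed point of $\Phi$ is then a weak solution of \eqref{EQ-NL}, and conversely any weak solution coincides with its image under $\Phi$ by the uniqueness part of Theorem \ref{theo-weak}. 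The choice of the ambient space depends on the case: in case (I) (i.e. Case (i) with $\alpha\in(1,\alpha_0)$) one uses $X_{T^\star}=C([0,T^\star];V_\gamma)\cap C^1([0,T^\star];L^2(X))$ with the natural norm, exploiting the embedding $V_{1/\alpha}\hookrightarrow L^\infty(X)$ from \eqref{embed-supnorm}; in case (II) one must enlarge the space to also include the integrability $L^{rq}((0,T^\star);L^{2r}(X))$ appearing in \eqref{regularity-nl2}, using instead the embeddings \eqref{embed-r1}--\eqref{embed-r3}.

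The key steps, in order, are as follows. First, one establishes the mapping property: for $v$ in the ball of radius $R$ (around the linear solution $S_1(t)u_0+S_2(t)u_1$), the nonlinearity $f(v)$ lies in the space $L^q((0,T^\star);L^2(X))\cap C([0,T^\star];V_{-\gamma})$ required by Theorem \ref{theo-weak}. In case (I) this is immediate since $\|f(v(\cdot,t))\|_{L^2(X)}\le Q_2(\|v(\cdot,t)\|_{L^\infty(X)})\cdot m(X)^{1/2}$ and $v$ is bounded in $V_\gamma\hookrightarrow L^\infty(X)$, uniformly in $t$; in case (II), the polynomial bound \eqref{grow-f1} together with $|f(v)|\le C|v|^r$ gives $\|f(v(\cdot,t))\|_{L^2(X)}\le C\|v(\cdot,t)\|_{L^{2r}(X)}^r$, and one uses the $L^{rq}((0,T^\star);L^{2r}(X))$ membership of $v$ — note the exponents are tuned precisely so that $r\cdot$(that norm) lands in $L^q$ in time. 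Second, one invokes the quantitative estimates \eqref{EST-1} and \eqref{EST-1-3} (and the $L^{rq}(L^{2r})$ bound on $S_3(t)f$, obtained as in Step 1 of the proof of Theorem \ref{theo-weak} combined with the relevant Sobolev embedding) to show that $\Phi$ maps the ball into itself, provided $T^\star$ is chosen small enough; the crucial point is that the inhomogeneous term carries a factor $t^{1/p+\alpha-2}$ with positive exponent (since $1+p(\alpha-2)>0$ by the restriction $p<1/(2-\alpha)$), so its contribution can be made arbitrarily small by shrinking $T^\star$, while the linear terms $S_1,S_2$ are controlled independently of $T^\star$ — this forces the radius $R$ of the ball to be dictated by $\|u_0\|_{V_\gamma}+\|u_1\|_{L^2(X)}$ alone, which is why $T^\star$ depends only on $(u_0,u_1)$. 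Third, one proves that $\Phi$ is a contraction: for $v_1,v_2$ in the ball, $\Phi(v_1)-\Phi(v_2)$ solves the linear problem with zero data and source $f(v_1)-f(v_2)$, so by \eqref{EST-1} (applied with zero initial data) the relevant norm of the difference is bounded by $Ct^{1/p+\alpha-2}\|f(v_1)-f(v_2)\|_{L^q(L^2)}$; using the mean value theorem, $|f(v_1)-f(v_2)|\le (Q_1(\|v_1\|_\infty)+Q_1(\|v_2\|_\infty))|v_1-v_2|$ in case (I), or the polynomial estimate $|f(v_1)-f(v_2)|\le C(|v_1|^{r-1}+|v_2|^{r-1})|v_1-v_2|$ together with Hölder in case (II), one obtains a Lipschitz constant of the form $C(R)\,T^{\star\,\delta}$ with $\delta>0$, hence $<1$ for $T^\star$ small. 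The Banach fixed point theorem then yields the unique fixed point $u$ in the ball.

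Finally, one verifies that the fixed point $u$ is genuinely a weak solution in the sense of Definition \ref{def-weak-nl} (resp. \ref{def-weak-nl2}): the regularity \eqref{regularity-nl} (resp. \eqref{regularity-nl2}) holds because $u=\Phi(u)$ is a linear weak solution with source $f(u)$ of the required class, so Theorem \ref{theo-weak} gives exactly those properties; the variational identity \eqref{Var-I-nl} and the initial conditions \eqref{ini-nl-0}, \eqref{ini-nl} are likewise inherited from Theorem \ref{theo-weak}. For the "global uniqueness" claim — that $u$ is the \emph{only} weak solution on $(0,T^\star)$, not merely the only one in the ball — one argues that any weak solution must, by the uniqueness in Theorem \ref{theo-weak}, satisfy the integral equation \eqref{fixed-point}, and then a Gronwall-type / continuity argument on $[0,T^\star]$ (starting from the fact that any weak solution is continuous into $V_\gamma$ with value $u_0$ at $t=0$, hence lies in the ball for small times, and then propagating this) shows it must coincide with the fixed point on the whole interval.

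The main obstacle I expect is the bookkeeping for case (II): one must verify that the exponent pair $(rq, 2r)$ in \eqref{regularity-nl2} is compatible both with the Sobolev embedding available for $V_{1/\alpha}$ in the various subcases \eqref{embed-r1}--\eqref{embed-r3} and with the temporal integrability needed to feed $f(u)\in L^q((0,T^\star);L^2(X))$ back into the linear theory — in particular checking that the auxiliary estimate on $S_3(t)f$ in the norm of $L^{rq}((0,T^\star);L^{2r}(X))$ closes the loop with a positive power of $T^\star$, which is what makes the self-mapping and contraction estimates work simultaneously in the same function space. The delicate point is that in the subcase $\alpha=\alpha_0$ one has freedom in choosing $r=r_\ast\in(1,\infty)$, whereas for $\alpha>\alpha_0$ (and in Case (ii)) $r$ is rigidly fixed by $r_\ast=\theta_A\alpha/(\theta_A\alpha-1)$, and one must check the estimates are consistent with that forced value.
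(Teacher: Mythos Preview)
Your proposal is correct and follows essentially the same route as the paper: a Banach fixed-point argument for the Duhamel map $\Phi$ on $C([0,T^\star];V_\gamma)\cap C^1([0,T^\star];L^2(X))$ in case (I), enlarged by the factor $L^{rq}((0,T^\star);L^{2r}(X))$ in case (II), with the self-mapping and contraction estimates driven by the linear bounds of Theorem~\ref{theo-weak} and the embeddings \eqref{embed-supnorm}--\eqref{embed-r3}. Two minor remarks: the paper centers its ball at $0$ with radius $R^\star\ge 2C(\|u_0\|_{V_\gamma}+\|u_1\|_{L^2(X)})$ rather than at the linear solution (an inessential difference), and the paper closes the $L^{rq}(L^{2r})$ loop in case (II) simply via $\|\Phi(u)\|_{L^{rq}(L^{2r})}\le (T^\star)^{1/(rq)}\|\Phi(u)\|_{C(V_\gamma)}$ from the embedding $V_\gamma\hookrightarrow L^{2r}$, which answers your ``main obstacle'' directly; also, you are more careful than the paper about distinguishing uniqueness in the ball from uniqueness among all weak solutions.
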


Our second main result shows that locally-defined weak solutions can be
extended to a larger interval.

\begin{theorem}
\label{extension} Let the assumptions of Theorem \ref{theo-loc} be
satisfied. Then the unique weak solution on $(0,T^{\star })$ of \eqref{EQ-NL}
can be extended to the interval $[0,T^{\star }+\tau ]$, for some $\tau >0$,
so that, the extended function is the unique weak solution of \eqref{EQ-NL}
on $(0,T^{\star }+\tau )$.
\end{theorem}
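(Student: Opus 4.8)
\textbf{Proof proposal for Theorem \ref{extension}.}

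The plan is to restart the local existence argument of Theorem \ref{theo-loc} from the new "initial time" $T^{\star}$, using as data the terminal values $(u(\cdot,T^{\star}),\partial_t u(\cdot,T^{\star}))$ of the already-constructed solution, and then glue the two pieces together. First I would record that the weak solution $u$ on $(0,T^{\star})$ furnished by Theorem \ref{theo-loc} lies in $C([0,T^{\star}];V_{\gamma})\cap C^{1}([0,T^{\star}];L^{2}(X))$; in particular the traces $u^{\star}:=u(\cdot,T^{\star})\in V_{\gamma}$ and $u^{\star}_{1}:=\partial_t u(\cdot,T^{\star})\in L^{2}(X)$ are well defined and belong to exactly the spaces in which Theorem \ref{theo-loc} takes its data. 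The subtlety, compared with the classical ($\alpha=2$) wave equation, is that the Caputo derivative $\mathbb{D}_{t}^{\alpha}$ is nonlocal in time, so the equation on $(T^{\star},T^{\star}+\tau)$ is not simply the same equation with shifted data: the memory integral $\int_{0}^{t}(t-s)^{1-\alpha}\partial_s^2 u\,ds$ sees the whole past $[0,t]$. I would handle this by writing, for $t\in[0,\tau]$, the candidate extension on $[T^{\star},T^{\star}+\tau]$ via the variation-of-parameters formula \eqref{fixed-point} but with the Duhamel term split as $\int_{0}^{T^{\star}}+\int_{T^{\star}}^{t}$; the first (history) integral, together with the homogeneous terms built from $(u_0,u_1)$, is a known, fixed function of $t$ that inherits the regularity \eqref{reg-lin}/\eqref{regularity-nl2} from the linear theory (Theorem \ref{theo-weak}, resp. Theorem \ref{theo-strong}), while the second integral is the genuinely new contribution to be obtained by a fixed point.

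Concretely, I would set up a contraction mapping on a ball in $C([T^{\star},T^{\star}+\tau];V_{\gamma})\cap C^{1}([T^{\star},T^{\star}+\tau];L^{2}(X))$ (in Case (II), also intersected with the Lebesgue–Bochner space appearing in \eqref{regularity-nl2}), exactly as in the proof of Theorem \ref{theo-loc}, where the map sends $v$ to
\begin{align*}
\mathcal{N}[v](t) &= \Big(\text{known history + homogeneous part, built from }u\text{ on }[0,T^{\star}]\text{ and }(u_0,u_1)\Big) \\
&\quad +\sum_{n=1}^{\infty}\left(\int_{T^{\star}}^{t}f_n(v(s))\,(t-s)^{\alpha-1}E_{\alpha,\alpha}(-\lambda_n(t-s)^{\alpha})\,ds\right)\varphi_n.
\end{align*}
The estimates on the new Duhamel term are verbatim those of Lemma \ref{lem-INE} and Steps 1–3 of the proof of Theorem \ref{theo-weak} (with $0$ replaced by $T^{\star}$ as the lower limit, which only improves the time weights), using \textbf{(Hf2)} in Case (I) to bound $\|f(v)\|_{L^2}\le Q_2(\|v\|_{L^\infty})\,m(X)^{1/2}$ via the embedding $V_\gamma\hookrightarrow L^\infty$ from \eqref{embed-supnorm}, and using \textbf{(Hf1)} together with the embeddings \eqref{embed-r1}–\eqref{embed-r3} and Hölder in time in Case (II). For $\tau>0$ small enough (depending on the norms of $u^{\star},u^{\star}_1$, hence on $u$ at time $T^{\star}$), $\mathcal{N}$ maps a suitable ball into itself and is a strict contraction, giving a unique weak solution $\tilde u$ on $(T^{\star},T^{\star}+\tau)$.

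The remaining point is the gluing: I would define the extended function $U$ to equal $u$ on $[0,T^{\star}]$ and $\tilde u$ on $[T^{\star},T^{\star}+\tau]$, and then verify that $U$ is a weak solution of \eqref{EQ-NL} on $(0,T^{\star}+\tau)$ in the sense of Definition \ref{def-weak-nl} (resp. \ref{def-weak-nl2}). Continuity of $U$ in $V_{\gamma}$ and $C^1$-regularity in $L^2(X)$ across $t=T^{\star}$ follow because both pieces take the same value and the same first-order trace $u^{\star},u^{\star}_1$ there; the initial conditions \eqref{ini-nl-0}–\eqref{ini-nl} are untouched. The one genuinely nontrivial check — and the step I expect to be the main obstacle — is that the \emph{variational identity} \eqref{Var-I-nl} holds for $U$ at a.e.\ $t\in(T^{\star},T^{\star}+\tau)$ with the \emph{global} Caputo derivative $\mathbb{D}_{t}^{\alpha}U$ (integrating against the whole past $[0,t]$), not merely with the derivative of the piece $\tilde u$ on $[T^{\star},t]$. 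Here one must show that the definition \eqref{fra-RL} of $\mathbb{D}_{t}^{\alpha}$ applied to the glued function $U$ coincides, for $t>T^{\star}$, with the sum of the "history term" (already incorporated into the known part of $\mathcal{N}$) and the local Caputo-type derivative of the new piece; this is precisely the additivity of the fractional integral over $[0,T^{\star}]\cup[T^{\star},t]$ combined with the fact that the history contribution was, by construction of $\mathcal{N}[v]$, exactly the expansion-coefficient representation of $-A$ applied to that contribution plus the forcing, as in \eqref{dt-al}. Once this bookkeeping is done, $\mathbb{D}_{t}^{\alpha}U\in C([0,T^{\star}+\tau];V_{-\gamma})$ (resp.\ the space in \eqref{regularity-nl2}) and \eqref{Var-I-nl} for $U$ follow by linearity, so $U$ is a weak solution on $(0,T^{\star}+\tau)$; uniqueness on the whole interval follows from the uniqueness parts of Theorem \ref{theo-loc} applied successively on $(0,T^{\star})$ and $(T^{\star},T^{\star}+\tau)$.
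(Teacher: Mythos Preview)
Your proposal is correct and is essentially the paper's argument, with one presentational difference worth noting. The paper avoids the gluing step you flag as ``the main obstacle'' by setting up the contraction directly on the space
\[
\mathbb{K}=\Big\{v\in C([0,T^{\star}+\tau];V_{\gamma})\cap C^{1}([0,T^{\star}+\tau];L^{2}(X)):\ v=u\ \text{on}\ [0,T^{\star}],\ \|v-u(\cdot,T^{\star})\|\le R\ \text{on}\ [T^{\star},T^{\star}+\tau]\Big\},
\]
and using the \emph{same} global map $\Phi$ from \eqref{A1} built from $(u_0,u_1)$ and the full Duhamel integral $\int_0^t$. Since any $v\in\mathbb{K}$ already equals $u$ on $[0,T^{\star}]$, the splitting $\int_0^t=\int_0^{T^{\star}}+\int_{T^{\star}}^t$ is exactly your decomposition into ``known history'' plus ``new Duhamel term'', so the two fixed-point maps coincide; but because the paper's fixed point lives on the whole interval $[0,T^{\star}+\tau]$ and satisfies the global representation \eqref{fixed-point} there, the regularity of $\mathbb{D}_t^{\alpha}$ and the variational identity on $(0,T^{\star}+\tau)$ follow verbatim from Step~3 of the proof of Theorem~\ref{theo-loc}, with no separate verification across $t=T^{\star}$. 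Your route works too, and your sketch of the gluing (additivity of the fractional integral over $[0,T^{\star}]\cup[T^{\star},t]$ and the identity \eqref{dt-al}) is the right one; the paper's formulation simply absorbs that bookkeeping into the choice of $\mathbb{K}$.
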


Our next result shows the precise conditions for which we have a global weak
solution.

\begin{theorem}
\label{theo-glob-sol}Let the assumptions of Theorem \ref{theo-loc} be
satisfied. Then the system \eqref{EQ-NL} has a unique global weak solution
on $[0,\infty )$ or there exists a maximal time $T_{\max }\in (0,\infty )$
such that $u:X\times [0,T_{\max })\rightarrow L^{2}(X)$ is a maximal
locally-defined weak solution, and in that case, we have%
\begin{equation}
\limsup_{t\rightarrow T_{\max }^{-}}\Vert u(\cdot ,t)\Vert _{V_{\gamma
}}=\infty \;\;\mbox{ and }\;\limsup_{t\rightarrow T_{\max }^{-}}\Vert
\partial _{t}u(\cdot ,t)\Vert _{L^{2}(X)}=\infty .  \label{t-maximal}
\end{equation}
\end{theorem}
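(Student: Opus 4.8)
The plan is to argue by a standard continuation/blow-up dichotomy built on the two previous main results, Theorem \ref{theo-loc} (local existence and uniqueness) and Theorem \ref{extension} (every local weak solution extends strictly to the right). First I would define $T_{\max}$ as the supremum of all $T>0$ for which \eqref{EQ-NL} admits a (necessarily unique, by Theorem \ref{theo-loc}) weak solution on $(0,T)$ in the relevant sense (Definition \ref{def-weak-nl} in case (I), Definition \ref{def-weak-nl2} in case (II)). Since $u_0\in V_\gamma$ and $u_1\in L^2(X)$, Theorem \ref{theo-loc} guarantees $T_{\max}>0$. Using uniqueness, the solutions on nested intervals coincide on their common domain, so they glue to a single \emph{maximal} weak solution $u:X\times[0,T_{\max})\to L^2(X)$, which has the regularity \eqref{regularity-nl} (resp. \eqref{regularity-nl2}) on every compact subinterval of $[0,T_{\max})$ and satisfies the variational identity a.e. on $(0,T_{\max})$. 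If $T_{\max}=\infty$ we are in the global case and there is nothing more to prove; so assume $T_{\max}<\infty$.

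Next I would prove the blow-up alternative \eqref{t-maximal} by contradiction. Suppose
\[
\limsup_{t\to T_{\max}^-}\Big(\Vert u(\cdot,t)\Vert_{V_\gamma}+\Vert\partial_t u(\cdot,t)\Vert_{L^2(X)}\Big)=:M<\infty.
\]
Then along a sequence $t_k\uparrow T_{\max}$ the pair $(u(\cdot,t_k),\partial_t u(\cdot,t_k))$ stays bounded in $V_\gamma\times L^2(X)$. The key point is to show that this forces $\lim_{t\to T_{\max}^-}(u(\cdot,t),\partial_t u(\cdot,t))$ to exist in $V_\sigma\times V_{-\beta}$ (indeed, one can hope for convergence in $V_\gamma\times L^2(X)$ itself), after which one restarts the problem at time $T_{\max}$ with this limit as initial data. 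To get this convergence I would return to the Duhamel/spectral representation \eqref{fixed-point} and estimate Cauchy differences $u(\cdot,t)-u(\cdot,t')$ and $\partial_t u(\cdot,t)-\partial_t u(\cdot,t')$ for $t,t'$ near $T_{\max}$, exactly as in the continuity steps (Steps 1--2) of the proof of Theorem \ref{theo-weak}, but now with source $f(u(\cdot,\tau))$: the bound on $M$ together with \textbf{(Hf2)} (resp. \textbf{(Hf1)} and the embedding $V_\gamma\hookrightarrow L^{2r}(X)$ of \eqref{embed-r1}--\eqref{embed-r3}) controls $\Vert f(u(\cdot,\tau))\Vert_{L^2(X)}$ uniformly on $[0,T_{\max})$, so the Mittag-Leffler estimates \eqref{Est-MLF}, \eqref{IN-L1}, \eqref{IN-L2} of Lemma \ref{lem-INE} make the Duhamel term equicontinuous up to $T_{\max}$. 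Hence $u$ extends continuously to $[0,T_{\max}]$ with values in $V_\gamma$ and $\partial_t u$ to $[0,T_{\max}]$ with values in $L^2(X)$; call the endpoint data $(\tilde u_0,\tilde u_1)\in V_\gamma\times L^2(X)$.

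Then I would apply Theorem \ref{theo-loc} with initial data $(\tilde u_0,\tilde u_1)$ at the (shifted) initial time $T_{\max}$: there is $\delta>0$ and a weak solution $v$ on $(T_{\max},T_{\max}+\delta)$. Concatenating $u$ on $[0,T_{\max}]$ with $v$ — and invoking Theorem \ref{extension} (or just rechecking the regularity \eqref{regularity-nl}/\eqref{regularity-nl2} and the variational identity across the junction, using that both $u(\cdot,T_{\max}^-)=\tilde u_0=v(\cdot,T_{\max}^+)$ in $V_\gamma$ and $\partial_t u(\cdot,T_{\max}^-)=\tilde u_1=\partial_t v(\cdot,T_{\max}^+)$ in $L^2(X)$, so $\mathbb D_t^\alpha$ of the concatenation is well-defined and continuous into $V_{-\gamma}$) — produces a weak solution on $(0,T_{\max}+\delta)$, contradicting the maximality of $T_{\max}$. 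Therefore \eqref{t-maximal} must hold. A small technical wrinkle is that the Caputo derivative is nonlocal in time, so one must check that $\mathbb D_t^\alpha$ of the concatenated function, computed with the original lower limit $0$, agrees (up to the regularity class $C([0,T_{\max}+\delta];V_{-\gamma})$, resp. $\oplus L^q$) with what the variational identity demands on $(T_{\max},T_{\max}+\delta)$; this follows from the representation \eqref{fixed-point}, since the solution $v$ furnished by Theorem \ref{theo-loc} is itself given by the global Duhamel formula with data $(\tilde u_0,\tilde u_1)$, which matches the analytic continuation of \eqref{fixed-point} past $T_{\max}$ thanks to uniqueness on the overlap. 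I expect this matching of the nonlocal operator across the gluing point to be the main obstacle; the rest is a routine continuation argument. Finally, since for $t<T_{\max}$ one clearly has $\Vert u(\cdot,t)\Vert_{V_\gamma}<\infty$ and $\Vert\partial_t u(\cdot,t)\Vert_{L^2(X)}<\infty$ (the solution lives in $C([0,t];V_\gamma)\cap C^1([0,t];L^2(X))$), the two $\limsup$'s in \eqref{t-maximal} are genuinely attained only in the limit $t\to T_{\max}^-$, which is what the statement asserts.
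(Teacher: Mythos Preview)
Your overall architecture—define $T_{\max}$ as the supremum of existence times, assume boundedness of $\Vert u\Vert_{V_\gamma}+\Vert\partial_t u\Vert_{L^2(X)}$ on $[0,T_{\max})$, show via the spectral/Duhamel representation and the Mittag--Leffler estimates that $(u(\cdot,t),\partial_t u(\cdot,t))$ is Cauchy as $t\uparrow T_{\max}$, then extend past $T_{\max}$ to contradict maximality—is precisely the paper's strategy. The paper isolates the Cauchy argument as a separate lemma (Lemma~\ref{lem-34}) and then proceeds exactly as you outline.

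The gap is in your final extension step. You propose to apply Theorem~\ref{theo-loc} at the \emph{shifted} initial time $T_{\max}$ with data $(\tilde u_0,\tilde u_1)$, obtain a fresh solution $v$ on $(T_{\max},T_{\max}+\delta)$, and concatenate. As you yourself flag, $\mathbb{D}_t^\alpha$ is nonlocal with lower limit $0$: the solution $v$ produced this way satisfies the equation with the shifted derivative $\mathbb{D}_{t-T_{\max}}^\alpha$, not with $\mathbb{D}_t^\alpha$. Your attempted fix—that $v$ ``matches the analytic continuation of \eqref{fixed-point} past $T_{\max}$''—is not correct: the Mittag--Leffler kernels $t^{\alpha-1}E_{\alpha,\alpha}(-\lambda_n t^\alpha)$ enjoy no semigroup/composition law, so the Duhamel formula with data $(\tilde u_0,\tilde u_1)$ at $T_{\max}$ has no reason to coincide with the continuation of the Duhamel formula with data $(u_0,u_1)$ at $0$. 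The concatenated function will in general fail the variational identity \eqref{Var-I-nl} on $(T_{\max},T_{\max}+\delta)$.

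The paper sidesteps this entirely: once $u$ is shown to extend continuously to $[0,T_{\max}]$ with values in $V_\gamma$ (and $\partial_t u$ in $L^2(X)$), it applies Theorem~\ref{extension} \emph{directly} with $T^\star=T_{\max}$. Recall that Theorem~\ref{extension} is proved by a contraction on the space of functions that agree with $u$ on $[0,T^\star]$ and are given by the global fixed-point formula \eqref{fixed-point} with lower limit $0$; there is no restart, no gluing, and hence no issue with the memory of the Caputo derivative. You mention Theorem~\ref{extension} parenthetically, but it is the entire mechanism for the contradiction, not an optional alternative to concatenation.
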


\begin{remark}
\emph{The proofs of these statements contain an explicit dependence of the
time of existence of solutions with respect to the initial data which allows
for a longer time of existence for small initial data. }
\end{remark}

We now verify under what conditions a weak solution may become strong (in
the sense that the fractional wave equation is satisfied pointwise a.e. on $%
X\times \left( 0,T\right) $) for as long as the former exists.

\begin{theorem}
\label{weak-to-strong}For \textbf{Case (i) }when\textbf{\ }$\alpha \in
\lbrack \alpha _{0},2)$ (provided that the interval is non-empty) and
\textbf{Case (ii)}, respectively, suppose that $f$ satisfies \emph{(\textbf{%
Hf1})}. Let $u$ be a (maximally-defined) weak solution of problem %
\eqref{EQ-NL} on $\left[ 0,T\right] $, for some $T\in \left( 0,T_{\max
}\right) $ such that%
\begin{equation}
\left\Vert u\right\Vert _{L^{q\left( r-1\right) }(\left( 0,T\right)
;L^{\infty }\left( X\right) )}<\infty .  \label{cond-strong}
\end{equation}%
Then $u$ is a strong solution on $\left[ 0,T\right] $, namely,%
\begin{equation*}
\mathbb{D}_{t}^{\alpha }u\in L^{1}\left( (0,T);L^{2}\left( X\right) \right)
\;\mbox{ and }\; \text{ }u\in L^{1}\left( (0,T);D\left( A\right) \right) .
\end{equation*}
\end{theorem}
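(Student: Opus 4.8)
The plan is to show that a weak solution satisfying \eqref{cond-strong} automatically has the additional regularity \eqref{reg-strong}, which by Definition~\ref{def-strong} forces the equation to hold pointwise a.e. The starting point is the representation formula \eqref{fixed-point}, which expresses $u$ in terms of the three solution operators applied to $u_0$, $u_1$ and the source term $f(u(\cdot,\tau))$. The key observation is that, once we know $u$ solves \eqref{EQ-NL} weakly, we may regard it as the weak solution of the \emph{linear} problem \eqref{EQ-LI} with right-hand side $f(\cdot,t):=f(u(\cdot,t))$. So the strategy is to verify that this particular $f$ satisfies the hypotheses of Theorem~\ref{theo-strong}(a) (or at least those needed for its conclusion), and then invoke that theorem.

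First I would estimate the source term $f(u(\cdot,t))$ in the relevant norms. Using \textbf{(Hf1)}, namely $f(0)=0$ and $|f'(s)|\le C|s|^{r-1}$, the mean value theorem gives $|f(u(x,t))|\le C|u(x,t)|^{r}$, hence $\|f(u(\cdot,t))\|_{L^2(X)}\le C\|u(\cdot,t)\|_{L^{2r}(X)}^{r}$; combined with the regularity $u\in L^{rq}((0,T);L^{2r}(X))$ from \eqref{regularity-nl2} this yields $f(u)\in L^{q}((0,T);L^{2}(X))$. For the first equation of \eqref{reg-strong} one must then run the third-term estimates from Step~3 of the proof of Theorem~\ref{theo-weak}: the bound \eqref{D3} gives $\|S_3(t)f(u)\|_{V_{-\gamma}}\le Ct^{1/p}\|f(u)\|_{L^q((0,T);L^2(X))}$, which together with the $u_0$- and $u_1$-terms in \eqref{dt-al} shows $\mathbb{D}_t^\alpha u\in C([0,T];V_{-\gamma})$; but to upgrade to $L^1((0,T);L^2(X))$ I would instead mimic the proof of Theorem~\ref{theo-strong}(a), using \eqref{Est-MLF2} on the $u_0$-term to get $Ct^{1-\alpha}\|u_0\|_{V_\gamma}$ (integrable since $1-\alpha>-1$), similarly for the $u_1$-term, and on the source term estimating $\|S_3'(t)f(u)\|_{L^2(X)}$ via \eqref{IN-L2} and Hölder to obtain a bound of the form $Ct^{1/p+\alpha-2}\|f(u)\|_{L^q((0,T);L^2(X))}$, which is integrable on $(0,T)$ precisely because $1+p(\alpha-2)>0$. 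This shows $\mathbb{D}_t^\alpha u\in L^1((0,T);L^2(X))$. Since $Au=-\mathbb{D}_t^\alpha u+f(u)$ pointwise from \eqref{dt-al}, and both terms on the right are in $L^1((0,T);L^2(X))$, we get $u\in L^1((0,T);D(A))$, completing \eqref{reg-strong}.

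The point where \eqref{cond-strong} enters, and the main obstacle, is that the crude bound $\|f(u(\cdot,t))\|_{L^2(X)}\le C\|u(\cdot,t)\|_{L^{2r}(X)}^r$ via the embedding $V_\gamma\hookrightarrow L^{2r}(X)$ only gives $f(u)\in L^{q/r}$-type integrability in time if one uses just $u\in C([0,T];V_\gamma)$, which may be insufficient; the hypothesis $\|u\|_{L^{q(r-1)}((0,T);L^\infty(X))}<\infty$ is what lets us write $|f(u(x,t))|\le C|u(x,t)|^{r-1}|u(x,t)|$ and then $\|f(u(\cdot,t))\|_{L^2(X)}\le C\|u(\cdot,t)\|_{L^\infty(X)}^{r-1}\|u(\cdot,t)\|_{L^2(X)}$, so that by Hölder in time with exponents $q$ and its conjugate (splitting $q(r-1)$ against the boundedness of $\|u(\cdot,t)\|_{L^2(X)}$ in $C([0,T];L^2(X))$) we recover $f(u)\in L^q((0,T);L^2(X))$ with a quantitative bound in terms of $\|u\|_{L^{q(r-1)}((0,T);L^\infty(X))}$ and $\|u\|_{C([0,T];V_\gamma)}$. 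I would also need to check $f(u)\in C([0,T];V_{-\gamma})$ (or that the first component in \eqref{regularity-nl2} suffices) so that the continuity statement $\mathbb{D}_t^\alpha u\in C([0,T];V_{-\gamma})$ from the weak-solution definition is consistent; this follows from continuity of $t\mapsto u(\cdot,t)$ in $V_\gamma\hookrightarrow L^{2r}(X)$, continuity of Nemytskii-type maps under \textbf{(Hf1)}, and $L^2(X)\hookrightarrow V_{-\gamma}$. Once these integrability facts are in hand, the conclusion is immediate from the representation \eqref{fixed-point} and the estimates already developed in the proofs of Theorems~\ref{theo-weak} and \ref{theo-strong}, so no genuinely new machinery is required — the content is entirely in choosing the right Hölder split so that \eqref{cond-strong} yields $f(u)\in L^q((0,T);L^2(X))$.
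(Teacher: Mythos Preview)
There is a genuine gap in your argument: you misidentify both the estimate needed on the source term and the role of hypothesis \eqref{cond-strong}.

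Look again at the third summand in \eqref{dt-al}: for $\mathbb{D}_t^\alpha u$ one must control, in $L^2(X)$, the series
\[
\sum_{n=1}^\infty\Big(\int_0^t f_n(u(\tau))\,\lambda_n(t-\tau)^{\alpha-1}E_{\alpha,\alpha}(-\lambda_n(t-\tau)^\alpha)\,d\tau\Big)\varphi_n,
\]
which carries an extra factor of $\lambda_n$. By \eqref{IN-L1} with $\beta=1$ the kernel is only bounded by $C(t-\tau)^{-1}$, which is \emph{not} locally integrable in $\tau$; a direct H\"older estimate against $\|f(u)\|_{L^q((0,T);L^2(X))}$ therefore fails. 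The bound $Ct^{1/p+\alpha-2}\|f(u)\|_{L^q}$ you quote is the estimate \eqref{B1-1} for $S_3'(t)f$, i.e.\ for the contribution to $\partial_t u$, not to $\mathbb{D}_t^\alpha u$.

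The way the paper closes this gap is exactly the integration-by-parts step \eqref{D3-SS} in the proof of Theorem~\ref{theo-strong}(a), which trades the bad kernel for a good one at the price of introducing $\partial_\tau f$. Consequently, what must be checked is not $f(u)\in L^q((0,T);L^2(X))$ --- that already follows from $u\in L^{rq}((0,T);L^{2r}(X))$, with no need for \eqref{cond-strong} --- but rather $f(u)\in W^{1,q}((0,T);L^2(X))$, i.e.\ $\partial_t\big(f(u)\big)=f'(u)\,\partial_t u\in L^q((0,T);L^2(X))$. This is precisely where \eqref{cond-strong} enters: from \textbf{(Hf1)} one has $|f'(u)|\le C|u|^{r-1}$, hence
\[
\|f'(u(\cdot,t))\,\partial_t u(\cdot,t)\|_{L^2(X)}\le C\|u(\cdot,t)\|_{L^\infty(X)}^{\,r-1}\,\|\partial_t u(\cdot,t)\|_{L^2(X)},
\]
and since $\partial_t u\in C([0,T];L^2(X))$ is bounded in time, the right-hand side lies in $L^q(0,T)$ exactly when $\|u(\cdot,t)\|_{L^\infty(X)}^{r-1}\in L^q(0,T)$, i.e.\ $u\in L^{q(r-1)}((0,T);L^\infty(X))$. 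With this in hand, Theorem~\ref{theo-strong}(a) applies directly via \eqref{est-mathbbd}, yielding $\mathbb{D}_t^\alpha u\in L^1((0,T);L^2(X))$ and then $u\in L^1((0,T);D(A))$ from $Au=-\mathbb{D}_t^\alpha u+f(u)$. Your use of \eqref{cond-strong} to re-derive $f(u)\in L^q$ is redundant and misses the point.
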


\begin{proof}
The statement is an immediate consequence of \eqref{est-mathbbd} in Theorem %
\ref{theo-strong} and (\ref{cond-strong}). Indeed, it suffices to establish
that $\mathbb{D}_{t}^{\alpha }u\in L^{1}\left( (0,T);L^{2}\left( X\right)
\right) $ from (\ref{est-mathbbd}). Notice that its validity depends upon on
the fact that
\begin{equation}
f\left( u\right) \in L^{1}\left( (0,T);L^{2}\left( X\right) \right) ,\text{ }%
u\in C^{1}\left( \left[ 0,T\right] ;L^{2}\left( X\right) \right) ,
\label{con1}
\end{equation}%
$f(u\left( \cdot,0\right)) \in L^{2}\left( X\right) $ (with the latter being
automatically satisfied for an initial datum $u_{0}\in V_{\gamma
}\hookrightarrow L^{2r}\left( X\right) $) as well as the fact that
\begin{equation}
f^{^{\prime }}\left( u\right),\; \partial _{t}u\in
L^{q}\left((0,T);L^{2}\left( X\right) \right) .  \label{con2}
\end{equation}%
The regularities in \eqref{con1} are automatically verified for a weak
solution on $\left[ 0,T\right] ,$ while \eqref{con2} is also satisfied
provided that \eqref{cond-strong} holds on account of the assumptions for
the nonlinearity. The claim is thus proved.
\end{proof}

We have the following unconditional result.

\begin{theorem}
\label{cor-str}Let the assumptions \emph{\textbf{(Hf2)}} be satisfied in the
subcritical range $\alpha \in \left( 1,\alpha _{0}\right) $ for \textbf{Case
(i)}. Then every weak solution of \eqref{EQ-NL} on $\left( 0,T\right) $ is
in fact a strong energy solution in the sense of Definition \ref{def-strong}%
. If in addition, $u_{0}\in V_{\sigma }$ for some $\sigma >\frac{1}{\alpha }$
and $u_{1}\in V_{\beta }$ for some $\beta >\frac{\alpha -1}{\alpha },$ then
the strong solution also obeys%
\begin{equation}
u\in W^{2,1}((0,T);L^{2}(X))  \label{con2bis}
\end{equation}%
and the initial conditions are also satisfied in the sense of %
\eqref{ini-strong}.
\end{theorem}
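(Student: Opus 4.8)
The plan is to reduce everything to the regularity estimates already established for the linear problem in Theorem \ref{theo-strong}, applied with the fixed source $f(u)$ where $u$ is the given weak solution. The crucial observation in the subcritical range $\alpha\in(1,\alpha_0)$ is the embedding \eqref{embed-supnorm}, namely $V_\gamma=V_{1/\alpha}\hookrightarrow L^\infty(X)$. First I would note that a weak solution $u$ in the sense of Definition \ref{def-weak-nl} satisfies $u\in C([0,T];V_\gamma)$, hence $u\in C([0,T];L^\infty(X))$, so $\|u\|_{L^\infty((0,T)\times X)}<\infty$. Since $f\in C^1(\mathbb{R})$ with $f(0)=0$ (which is all that \textbf{(Hf2)} really requires, per the Remark following \textbf{(Hf2)}), the composition $f(u)$ is bounded on $X\times(0,T)$ and, because $m(X)<\infty$, we get $f(u)\in C([0,T];L^2(X))\hookrightarrow C([0,T];V_{-\gamma})$ as well as $f(u(\cdot,0))\in L^2(X)$.

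Next, to invoke part (a) of Theorem \ref{theo-strong} I need $f(u)\in W^{1,q}((0,T);L^2(X))$ for some admissible $q$ (dual to some $p$ with $1\le p<\frac{1}{2-\alpha}$); it suffices to take $p=1$, $q=\infty$, so the requirement is $f(u)\in W^{1,\infty}((0,T);L^2(X))$, i.e. $f(u)$ is Lipschitz into $L^2(X)$. Here I would compute $\partial_t f(u)=f'(u)\,\partial_t u$ in the weak sense; since $u\in C^1([0,T];L^2(X))$ we have $\partial_t u\in C([0,T];L^2(X))$, while $f'(u)$ is bounded on $X\times(0,T)$ (again using the $L^\infty$-bound on $u$ and continuity of $f'$), so $f'(u)\,\partial_t u\in L^\infty((0,T);L^2(X))$. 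Thus $f(u)\in W^{1,\infty}((0,T);L^2(X))$. Theorem \ref{theo-strong}(a), applied to the linear equation $\mathbb{D}_t^\alpha u+Au=f(u)$ with this source (the solution of which, by the uniqueness part of Theorem \ref{theo-strong} and the fact that \eqref{fixed-point} coincides with \eqref{sol-spec} for this source, is precisely our $u$), then yields $\mathbb{D}_t^\alpha u\in L^1((0,T);L^2(X))$ and $u\in L^1((0,T);D(A))$, which is exactly the strong-solution regularity \eqref{reg-strong}. Hence the equation holds pointwise a.e. on $X\times(0,T)$, and since the variational/energy identity \eqref{Var-I-nl} is part of the definition of weak solution, $u$ is a strong energy solution.

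For the second statement, assume in addition $u_0\in V_\sigma$ with $\sigma>\frac1\alpha$ and $u_1\in V_\beta$ with $\beta>\frac{\alpha-1}{\alpha}$. I would simply apply part (b) of Theorem \ref{theo-strong}, again with source $f(u)$, for which the hypotheses have just been verified ($f(u)\in W^{1,q}((0,T);L^2(X))$ with $q=\infty$, $p=1$). That theorem gives $u\in W^{2,1}((0,T);L^2(X))$, which is \eqref{con2bis}, together with estimate \eqref{est-strong} and the strong form of the initial conditions \eqref{ini-strong}. This completes the proof.

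The main obstacle, and the only point deserving care, is justifying that $\partial_t f(u)=f'(u)\partial_t u$ holds as a genuine Bochner-space time derivative (not merely formally), i.e. that $t\mapsto f(u(\cdot,t))$ is absolutely continuous into $L^2(X)$ with this derivative. This follows from the chain rule for superposition operators: $f$ is $C^1$, $u\in C^1([0,T];L^2(X))$, and $u$ takes values in a fixed bounded set of $L^\infty(X)$ on which $f$ and $f'$ are bounded and uniformly continuous, so the difference quotients $\frac{f(u(\cdot,t+h))-f(u(\cdot,t))}{h}$ converge in $L^2(X)$ to $f'(u(\cdot,t))\partial_t u(\cdot,t)$ by dominated convergence; one also uses $m(X)<\infty$ to pass from $L^\infty$ to $L^2$ bounds. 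Everything else is a direct citation of Theorem \ref{theo-strong}.
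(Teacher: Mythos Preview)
Your proposal is correct and follows essentially the same approach as the paper: use the embedding $V_{1/\alpha}\hookrightarrow L^\infty(X)$ in the subcritical range to obtain $u\in C([0,T];L^\infty(X))$, deduce from \textbf{(Hf2)} that $f'(u)\in L^\infty((0,T);L^\infty(X))$ and hence $f(u)\in W^{1,q}((0,T);L^2(X))$, and then invoke the estimates \eqref{est-mathbbd} and \eqref{est-strong} of Theorem~\ref{theo-strong} with source $f(u)$. Your added care in justifying the chain rule $\partial_t f(u)=f'(u)\partial_t u$ in $L^2(X)$ is a welcome detail that the paper leaves implicit.
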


\begin{proof}
Indeed, in the range provided for $\alpha \in \left( 1,\alpha _{0}\right) $
each weak solution turns out to be bounded, namely,%
\begin{equation}
u\in C\left( \left[ 0,T\right] ;V_{\gamma }\right) \subset C\left( \left[ 0,T%
\right] ;L^{\infty }\left( X\right) \right) .  \label{con3}
\end{equation}%
Consequently, one has $f^{^{\prime }}\left( u\right) \in L^{\infty }\left(
(0,T);L^{\infty }\left( X\right) \right) $ from \eqref{con3} and the
assumption (\textbf{Hf2}). This implies \eqref{con2} and then $\mathbb{D}%
_{t}^{\alpha }u\in L^{1}\left( (0,T);L^{2}\left( X\right) \right) $ from %
\eqref{est-mathbbd}. The regularity \eqref{con2bis} is an immediate
consequence of \eqref{con3} and \eqref{est-strong}.
\end{proof}

\begin{remark}
\emph{Notice that for the strong solutions of Theorem \ref{cor-str}, the two
notions of fractional derivative in (\ref{fra}) and (\ref{fra-RL}) are
equivalent. In particular, the Riemann-Liouville derivative (\ref{fra-RL})
provides for another useful way to set up numerical schemes for the
fractional wave equation (\ref{EQ-NL}), as well as to rigorously analyse the
convergence of such numerical schemes.}
\end{remark}

\section{Examples and concluding remarks}

\label{ex}

Before we give some examples, we introduce fractional order Sobolev spaces.
For an arbitrary bounded open set $\Omega \subset \mathbb{R}^{d}$ ($d\ge 1$)
and $0<s<1$, we endow the Hilbert space

\begin{equation*}
W^{s,2}(\Omega ):=\Big\{u\in L^{2}(\Omega ),\;\int_{\Omega }\int_{\Omega }%
\frac{|u(x)-u(y)|^{2}}{|x-y|^{d+2s}}\;dxdy<\infty \Big\},
\end{equation*}%
with the norm
\begin{equation*}
\Vert u\Vert _{W^{s,2}(\Omega )}:=\left( \int_{\Omega
}|u|^{2}\;dx+\int_{\Omega }\int_{\Omega }\frac{|u(x)-u(y)|^{2}}{|x-y|^{d+2s}}%
\;dxdy\right) ^{\frac{1}{2}}.
\end{equation*}%
We also let
\begin{equation*}
W_{0}^{s,2}(\Omega ):=\overline{\mathcal{D}(\Omega )}^{W^{s,2}(\Omega )},
\end{equation*}%
and
\begin{equation*}
W_{0}^{s,2}(\overline{\Omega }):=\Big\{u\in W^{s,2}({\mathbb{R}}^{d}):\;u=0\;%
\mbox{
in }\;{\mathbb{R}}^{d}\backslash \Omega \Big\}.
\end{equation*}%
Since $\Omega $ is assumed to be bounded we have the following continuous
embeddings:%
\begin{equation}
W_{0}^{s,2}(\Omega ),\;W_{0}^{s,2}(\overline{\Omega })\hookrightarrow
\begin{cases}
L^{\frac{2d}{d-2s}}(\Omega )\;\; & \mbox{ if }\;d>2s, \\
L^{p}(\Omega ),\;\;p\in (2,\infty )\;\; & \mbox{ if }\;d=2s, \\
C^{0,s-\frac{d}{2}}(\overline{\Omega })\;\; & \mbox{ if }\;d<2s.%
\end{cases}
\label{inj1}
\end{equation}

For more details on fractional order Sobolev spaces we refer to \cite%
{NPV,Gris,War} and their references.

\subsection{The case of second order elliptic operators in divergence form}

\label{ex-41}

Let $\Omega $ be a bounded domain of $\mathbb{R}^{d}$ ($d\geqslant 1$). In
what follows, we define $\mathcal{A}$ by the differential operator
\begin{equation*}
\mathcal{A}u(x)=-\sum_{i,j=1}^{d}\partial _{x_{i}}\left( a_{ij}(x)\partial
_{x_{j}}u\right) ,\quad x\in \Omega ,
\end{equation*}%
where $a_{ij}=a_{ji}\in L^{\infty }(\Omega )$, satisfy the ellipticity
condition
\begin{equation*}
\sum_{i,j=1}^{d}a_{ij}(x)\xi _{i}\xi _{j}\geq c|\xi |^{2},\quad x\in {\Omega
},\ \xi =(\xi _{1},\ldots ,\xi _{d})\in \mathbb{R}^{d}.
\end{equation*}%
In (\ref{EQ-NL0}), we consider the Dirichlet operator\footnote{%
Of course, the same differential operator $\mathcal{A}$ subject to Neumann
and/or Robin boundary conditions may be also allowed (see \cite{GW-F}). The
results in this section remain also valid in these cases without any
modifications to the main statements.} which is also the case investigated
in detail by \cite{Ki-Ya} (assuming only that $d\leq 3$, and $\Omega $ is of
class $\mathcal{C}^{2}$ and $a_{ij}\in C^{1}(\overline{\Omega })$)\textbf{. }%
Let $A$ be the realization in $L^{2}(\Omega )$ of $\mathcal{A}$ with the
Dirichlet boundary condition $u=0$ in ${\mathbb{\partial }}\Omega $. That
is, $A$ is the self-adjoint operator in $L^{2}(\Omega )$ associated with the
Dirichlet form%
\begin{equation}
\mathcal{E}_{A}(u,v)=\sum_{i,j=1}^{d}\int_{\Omega }a_{ij}(x)\partial
_{x_{j}}u\partial _{x_{i}}v\;dx,\;\;u,v\in V_{1/2}=W_{0}^{1,2}(\Omega ).
\label{ex-dir}
\end{equation}%
%
%
%
%
%
%
%
%
%
%
%
%
%
%
%
%
%
%
%
%
%
%
%
%
%
%
%
%
%
%
%
%
%
%
%
%
%In particular, $D\left( A\right) =W^{2,2}\left( \Omega \right) \cap V_{1/2}$
%and
We observe that%
\begin{align}
V_{1/2}& \hookrightarrow L^{\infty }\left( \Omega \right) \text{ when }d=1,%
\text{ and so }q_{A}=\infty ,  \label{energy-embed} \\
V_{1/2}& \hookrightarrow L^{2q}\left( \Omega \right) \text{, when }d=2,\text{
and so }q_{A}=q\in \left( 1,\infty \right) , \\
V_{1/2}& \hookrightarrow L^{\frac{2d}{d-2}}\left( \Omega \right) \text{,
when }d\geqslant 3,\text{ and so }q_{A}=\frac{d}{d-2}.
\end{align}%
The assumptions of Theorem \ref{thm-main} hold for the Dirichlet space $%
\left( \mathcal{E}_{A},W_{0}^{1,2}\left( \Omega \right) \right) $ (see
e.g., \cite{Fuk}).

According to Section \ref{main}, we have in \textbf{Case (i)} whenever $d<4$
($\Leftrightarrow q_{A}>2$) as well as in Case (ii)\ when $d\geqslant 4$ ($\Leftrightarrow 1<q_{A}\leq 2$):%
\begin{equation}
\left\{
\begin{array}{ll}
\alpha _{0}=2, & \text{if }d=1, \\
\alpha _{0}=\frac{2\left( q-1\right) }{q}, \qquad\qquad\text{for any }q\in
\left( 1,\infty \right) & \text{if }d=2, \\
\alpha _{0}=\frac{4}{3}, & \text{if }d=3,\text{ } \\
\text{no critical value}, & \text{if }d\geqslant 4.%
\end{array}%
\right.  \label{critic}
\end{equation}%
Recall that in all these cases $\theta _{A}=\frac{q_{A}}{2\left(
q_{A}-1\right) }$ (in particular, $\theta _{A}=\frac{1}{2}$ in dimension $%
d=1 $)$.$

\begin{theorem}
\label{diri}All the statements of Theorems \ref{theo-loc}, \ref{extension}, %
\ref{theo-glob-sol} and Theorem \ref{cor-str} are satisfied for the operator
$A$ associated with the Dirichlet space \eqref{ex-dir}.
\end{theorem}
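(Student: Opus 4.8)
The plan is to prove Theorem \ref{diri} by verifying that the concrete pair $(\mathcal{E}_A, W_0^{1,2}(\Omega))$ from \eqref{ex-dir} satisfies every standing hypothesis of the abstract theory developed above, after which Theorems \ref{theo-loc}, \ref{extension}, \ref{theo-glob-sol} and \ref{cor-str} apply verbatim, together with the identification of the critical exponent already recorded in \eqref{critic}. Thus the proof is not a new estimate but a check that the example fits the framework.

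First I would check that $(\mathcal{E}_A, W_0^{1,2}(\Omega))$ is a Dirichlet space in the sense of Definition \ref{Diri-form}. Density of $V_{1/2} = W_0^{1,2}(\Omega)$ in $L^2(\Omega)$ is immediate since $\mathcal{D}(\Omega) \subset W_0^{1,2}(\Omega)$; bilinearity, symmetry and non-negativity follow from $a_{ij} = a_{ji} \in L^\infty(\Omega)$ together with the ellipticity bound, which also yields $c\|\nabla u\|_{L^2(\Omega)}^2 \leq \mathcal{E}_A(u,u) \leq C\|\nabla u\|_{L^2(\Omega)}^2$. Combined with the Poincar\'e inequality on the bounded set $\Omega$, this makes $\mathcal{E}_{A,\lambda}(\cdot,\cdot)^{1/2}$ equivalent to the $W^{1,2}(\Omega)$-norm for each $\lambda > 0$, so completeness of $W_0^{1,2}(\Omega)$ gives closedness. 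For the Markovian property, by Remark \ref{rem1} it suffices to verify condition (d)$'$, i.e. that every normal contraction $v$ of $u \in W_0^{1,2}(\Omega)$ lies in $W_0^{1,2}(\Omega)$ and satisfies $\mathcal{E}_A(v,v) \leq \mathcal{E}_A(u,u)$; this is the classical fact that normal contractions operate on divergence-form forms with merely measurable coefficients, see \cite[Chapter 1]{Fuk}. Note also that the Poincar\'e inequality gives $\lambda_1 > 0$ directly, so no spectral shift is needed.

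Next I would confirm the hypotheses of Theorem \ref{thm-main}. Since $\Omega$ is bounded, Rellich--Kondrachov gives $W_0^{1,2}(\Omega) \overset{c}{\hookrightarrow} L^2(\Omega)$, and \eqref{inj1} supplies $W_0^{1,2}(\Omega) \hookrightarrow L^{2q_A}(\Omega)$ with the value of $q_A > 1$ recorded in \eqref{energy-embed} ($q_A = \infty$ for $d=1$, $q_A = q$ arbitrary in $(1,\infty)$ for $d=2$, and $q_A = d/(d-2)$ for $d \geq 3$, so that $2q_A$ is the critical Sobolev exponent). Hence every conclusion of Theorem \ref{thm-main} holds for $A$, and with it the full hypothesis set under which Theorems \ref{theo-loc}, \ref{extension}, \ref{theo-glob-sol} and \ref{cor-str} were proved. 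Transferring those results is then bookkeeping: one uses part (I) or (II) of Theorem \ref{theo-loc} according to whether $\alpha < \alpha_0$ or $\alpha \geq \alpha_0$, with $\alpha_0$ as in \eqref{critic}; in the subcritical case assumption \textbf{(Hf2)} reduces, by the remark following it, to $f \in C^1(\mathbb{R})$ with $f(0) = 0$, while in the remaining case \textbf{(Hf1)} is imposed with the exponent $r$ given by \eqref{grow-f2} (for $d \leq 3$) or \eqref{grow-f3} (for $d \geq 4$). Finally Theorem \ref{cor-str} applies in the window $\alpha \in (1, \alpha_0)$ --- that is $(1,2)$, $(1,2(q-1)/q)$, $(1,4/3)$ for $d = 1, 2, 3$ and vacuously for $d \geq 4$ --- producing strong energy solutions, with the $W^{2,1}$-regularity \eqref{con2bis} and the strong initial conditions \eqref{ini-strong} under the extra hypotheses $u_0 \in V_\sigma$, $u_1 \in V_\beta$.

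The hard part is really just the Markovian property in the first step: showing that normal contractions operate on $W_0^{1,2}(\Omega)$ without increasing $\mathcal{E}_A$. This hinges on the pointwise bound $|\nabla (T \circ u)| \leq |\nabla u|$ a.e. for $1$-Lipschitz $T$ with $T(0) = 0$, together with a truncation argument keeping $T \circ u$ in $W_0^{1,2}(\Omega)$ and the ellipticity and $L^\infty$-boundedness of $(a_{ij})$; it is standard and contained in the monograph literature \cite{Fuk}. Everything else in the proof is a citation of \eqref{inj1}, of Theorem \ref{thm-main}, or of the abstract theorems themselves.
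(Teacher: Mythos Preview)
Your proposal is correct and follows essentially the same approach as the paper: the paper gives no separate proof of Theorem \ref{diri} beyond recording the embeddings \eqref{energy-embed} and the critical values \eqref{critic}, and then asserting that the assumptions of Theorem \ref{thm-main} hold for $(\mathcal{E}_A, W_0^{1,2}(\Omega))$ with a reference to \cite{Fuk}. You have simply unpacked that citation by spelling out density, closedness, the Markovian property, the compact embedding via Rellich--Kondrachov, and the Sobolev embedding, which is exactly what the paper leaves implicit.
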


When $d\leq 3,$ Theorem \ref{diri} turns out to be a great improvement over
the existence results of \cite{Ki-Ya}. Recall that contrary to \cite{Ki-Ya},
we did not assume any regularity on the open set and the coefficients of the
operator. For the class of energy solutions that we have considered in this
article, our assumptions on the nonlinearity $f$ turn out to be weaker than
those enforced by \cite{Ki-Ya}. Moreover, with our assumptions (\textbf{Hf1}%
)-(\textbf{Hf2}) the results and corresponding estimates for the semi-linear
problem (\ref{EQ-NL}) end up being more stable under perturbation especially
as $\alpha \rightarrow 1^{+}$ (compare with \cite{Ki-Ya} and \cite{AEP}).
Indeed, as $\alpha \rightarrow 1^{+}$ the class of weak solutions and
estimates considered by \cite{Ki-Ya} can be only recovered for a
nonlinearity $f\left( s\right) $ that is slightly super-linear at infinity
(namely, $f\left( s\right) \sim \left\vert s\right\vert ^{1+\varepsilon },$
as $\left\vert s\right\vert \rightarrow \infty $, for some $0<\varepsilon
=\varepsilon \left( \alpha \right) $ that converges toward the value $\frac{4%
}{d}$ as $\alpha \rightarrow 1^{+}$ in all dimensions $d\leq 3$. We notice
that under our assumptions (\textbf{say when} $d=3$)\footnote{%
A similar discussion applies as well in the other remaining cases when $%
d\neq 3.$ We have focused our attention to the case $d=3$ only for the sake
of simplicity.}, there are (unique) weak solutions that require no essential
growth restrictions on the nonlinearity (see (\textbf{Hf2})) in the range
when $\alpha \in \left( 1,\frac{4}{3}\right) $ while for $\alpha \in \lbrack
\frac{4}{3},2),$ the nonlinearity $f$ must obey the conditions (see (\textbf{%
Hf1})):%
\begin{equation}
f(0)=0\;\text{and }\left\vert f^{^{\prime }}\left( s\right) \right\vert \leq
C\left\vert s\right\vert ^{r-1},\text{ for all }s\in {\mathbb{R}}.
\label{poly}
\end{equation}%
Here $r>1$ can be any number in $\left( 1,\infty \right) ,$ when $\alpha
_{0}=\frac{4}{3}$ and $r=\frac{3\alpha }{3\alpha -4}$ whenever $\alpha
_{0}\in \left( \frac{4}{3},2\right) $. It is interesting to oberve that as $%
\alpha \rightarrow 2^{-}$, we recover in (\ref{poly}) a growth exponent $%
r\rightarrow 3^{+}$ that allows for nonlinearities of cubic growth exactly
as in the case of the classical problem for $\alpha =2$. Notice again that
our notion of weak solutions is that of \textbf{energy solutions} which are
well-known to be also suited in the classical case $\alpha =2$. We emphasize
that the class of weak solutions, considered by \cite[Theorem 1.5]{Ki-Ya},
is a class of \emph{integral solutions} that do not necessarily satisfy an
\emph{energy identity} like (\ref{Var-I-nl}), so their notion is much weaker
than ours. For this class of integral solutions, \cite[Theorem 1.5]{Ki-Ya}
states additional estimates\footnote{%
These estimates reduce essentially to our estimates when $\gamma =1/\alpha ,$
$r=0$ in \cite[Theorem 1.5]{Ki-Ya}. In this case, in the supercritical case $%
\alpha \geqslant \alpha _{0},$ their growth exponent $(r=)b=d/\left(
d-4\gamma \right) $ turns out to be exactly the same as ours and we did not
use any Strichartz estimates.} that allow an improvement of the growth
exponent $r>3$ to one that $r\rightarrow 5^{-}$ as $\alpha \rightarrow 2^{-}$
(in particular, this is known to recover a global existence result for the
classical problem (\ref{EQ-NL}) with a quintic nonlinearity ($r=5$) when $%
\alpha =2$).

Finally, we notice that no statements for the existence of \textbf{strong}
solutions were given in \cite{Ki-Ya} for any $\alpha \in \left( 1,2\right) $%
. In the subcritcal range for $\alpha \in \left( 1,\alpha _{0}\right) $, by
Theorem \ref{cor-str} every energy solution is also a strong solution.

\subsection{The case of fractional powers of elliptic operators}

\label{ex-42}

Assume that $\Omega \subset {\mathbb{R}}^{d}$ ($d\ge 1$) is a bounded
Lipschitz domain. Denote by $L$ the self-adjoint operator considered in
Section \ref{ex-41}. For $0<s<1$, let $A:=L^{s}$ be the fractional powers of
$L$ as defined in \eqref{frac-pow}. Letting $(\lambda _{n})_{n\in {\mathbb{N}%
}}$ denote the eigenvalues of $L$ with associated eigenfunctions $(\varphi
_{n})_{n\in {\mathbb{N}}}$, it follows that $(\lambda _{n}^{s})_{n\in {%
\mathbb{N}}}$ are the corresponding eigenvalues of $A=L^{s}$ with associated
eigenfunctions $(\varphi _{n})_{n\in {\mathbb{N}}}$. Let $\mathbb{H}%
^{s}(\Omega ):=D(L^{s})$ where $D(L^{s})$ is defined as in \eqref{frac-pow}.
It is well-known that%
\begin{equation}
\mathbb{H}^{s}(\Omega )=%
\begin{cases}
W^{s,2}(\Omega )=W_{0}^{s,2}(\Omega )\;\;\; & \mbox{ if }\;0<s<\frac{1}{2},
\\
W_{00}^{\frac{1}{2},2}(\Omega )\;\; & \mbox{ if }\;s=\frac{1}{2}, \\
W_{0}^{s,2}(\Omega )\;\; & \mbox{ if }\;\frac{1}{2}<s<1,%
\end{cases}
\label{inf}
\end{equation}%
where%
\begin{equation*}
W_{00}^{\frac{1}{2},2}(\Omega ):=\Big\{u\in W^{\frac{1}{2},2}(\Omega
),\;\;\int_{\Omega }\frac{|u(x)|^{2}}{(\mbox{dist}(x,\partial \Omega ))^{2}}%
dx<\infty \Big\}.
\end{equation*}%
More precisely, we have%
\begin{equation*}
\begin{cases}
\mathbb{H}^{s}(\Omega )=W_{0}^{s,2}(\Omega )=[W_{0}^{s,2}(\Omega
),L^{2}(\Omega )]_{1-s}\;\qquad \mbox{
if }\;s\in (0,1)\setminus \{1/2\}, \\
\mathbb{H}^{1/2}(\Omega )=W_{00}^{\frac{1}{2},2}(\Omega
)=[W_{0}^{\frac 12,2}(\Omega ),L^{2}(\Omega )]_{\frac{1}{2}}.%
\end{cases}%
\end{equation*}%
Here for $0<\delta <1$, $[\cdot ,\cdot ]_{\delta }$ denotes the complex
interpolation space. Since $W_{00}^{\frac{1}{2},2}(\Omega )\hookrightarrow
W_{0}^{\frac{1}{2},2}(\Omega )$, it follows from \eqref{inf} that the
embedding \eqref{inj1} holds with $W_{0}^{s,2}(\Omega )$ replaced by $%
\mathbb{H}^{s}(\Omega )$. The following integral representation of $A=L^{s}$
has been given in \cite[Theorem 2.3]{CaSt}. Let $u,v\in \mathbb{H}%
^{s}(\Omega )$. Then
\begin{align}
\langle Au,v\rangle _{(\mathbb{H}^{s}(\Omega ))^{\star },\mathbb{H}%
^{s}(\Omega )}=& \int_{\Omega }\int_{\Omega }\Big(u(x)-u(y)\Big)\Big(%
v(x)-v(y)\Big)K_{s}(x,y)dxdy  \label{int-rep} \\
& +\int_{\Omega }\kappa _{s}(x)u(x)v(x)dx,  \notag
\end{align}%
where%
\begin{equation*}
0\leq K_{s}(x,y):=\frac{s}{\Gamma (1-s)}\int_{0}^{\infty }\frac{W_{\Omega
}^{D}(t,x,y)}{t^{1+s}}dt,\;\;x,y\in \Omega
\end{equation*}%
and
\begin{equation*}
0\leq \kappa _{s}(x)=\frac{s}{\Gamma (1-s)}\int_{0}^{\infty }\Big(
1-e^{-tL}1(x)\Big) \frac{dt}{t^{1+s}},\;x\in \Omega .
\end{equation*}%
Here $W_{\Omega }^{D}$ denotes the heat kernel associated to the semigroup $%
(e^{-tL})_{t\geq 0}$, namely,%
\begin{equation*}
W_{\Omega }^{D}(t,x,y)=\sum_{n=1}^{\infty }e^{-t\lambda _{n}}\varphi
_{n}(x)\varphi _{n}(y),\;\;t>0,\;x,y\in \Omega .
\end{equation*}%
We notice that it follows from \eqref{int-rep} that $A$ is associated with a
closed, bilinear, symmetric, continuous and coercive form $\mathcal{E}_{A},$
that is given by%
\begin{align}
\mathcal{E}_{A}(u,v)=&\int_{\Omega }\int_{\Omega }\Big(u(x)-u(y)\Big)\Big(%
v(x)-v(y)\Big)K_{s}(x,y)dxdy  \label{dir-space-2} \\
& +\int_{\Omega }\kappa _{s}(x)u(x)v(x)dx,\;\;\;D(\mathcal{E}%
_{A})=V_{1/2,s}:=\mathbb{H}^{s}(\Omega ).  \notag
\end{align}%
Proceeding as in \cite[Theorem 6.6]{War}, one has that $(\mathcal{E}%
_{A},V_{1/2,s})$ is a Dirichlet space. This fact, together with \eqref{inj1}
implies that $A$ satisfies all the conditions in Theorem \ref{thm-main}. It
follows from \eqref{inj1} that%
\begin{align}
V_{1/2,s}& \hookrightarrow L^{\infty }\left( \Omega \right) \text{ when }%
d<2s,\text{ and so }q_{A}=\infty ,  \label{energy-embed-bis} \\
V_{1/2,s}& \hookrightarrow L^{2q}\left( \Omega \right) \text{, when }d=2s,%
\text{ and so }q_{A}=q\in \left( 1,\infty \right),
\label{energy-embed-bis-1} \\
V_{1/2,s}& \hookrightarrow L^{\frac{2d}{d-2s}}\left( \Omega \right) \text{,
when }d>2s,\text{ and so }q_{A}=\frac{d}{d-2s}.  \label{energy-embed-bis-2}
\end{align}%
Notice that in \eqref{energy-embed-bis-2}, we have $q_A>2$ $\Leftrightarrow$
$2s<d<4s$. According to Section \ref{main} and taking into account the
embeddings \eqref{energy-embed-bis}, \eqref{energy-embed-bis-1} and %
\eqref{energy-embed-bis-2}, we have the following regarding the critical
value $\alpha _{0}$:

\begin{equation}
\begin{cases}
\alpha _{0}=2\; & \mbox{ if }\;d<2s, \\
\alpha _{0}=\frac{2(q-1)}{q}\;\;\mbox{ for any }\;\;q\in (1,\infty )\;\; & %
\mbox{  if }\;d=2s, \\
\alpha _{0}=\frac{4s}{d} & \mbox{ if }\;2s<d<4s, \\
\mbox{no critical value} & \mbox{ if }\;d\geq 4s.%
\end{cases}
\label{eq-alpha0}
\end{equation}

We can then conclude with the following result.

\begin{theorem}
\label{gen-frac-power} All the statements of Theorems \ref{theo-loc}, \ref%
{extension}, \ref{theo-glob-sol} and Theorem \ref{cor-str} are satisfied for
the operator $A$ associated with the Dirichlet space given in %
\eqref{dir-space-2}.
\end{theorem}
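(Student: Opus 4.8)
The plan is to observe that Theorem \ref{gen-frac-power} is not a new well-posedness statement but rather a verification that the abstract hypotheses of Theorems \ref{theo-loc}, \ref{extension}, \ref{theo-glob-sol} and \ref{cor-str} are fulfilled by the concrete operator $A=L^{s}$, so the entire work consists in checking that $(\mathcal{E}_{A},V_{1/2,s})$ is a Dirichlet space in the sense of Definition \ref{Diri-form} and that it satisfies the two structural requirements of Theorem \ref{thm-main}, namely the compact embedding $V_{1/2,s}\overset{c}{\hookrightarrow}L^{2}(\Omega)$ and the Sobolev-type embedding \eqref{Sobolev} for some $q_{A}>1$. Once these are in place, Theorem \ref{thm-main} produces the discrete spectrum, the sub-Markovian semigroup and all the mapping properties of $D(A^{\theta})$, and the abstract theorems apply verbatim.

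First I would record that, by the integral representation \eqref{int-rep} established in \cite[Theorem 2.3]{CaSt}, the form $\mathcal{E}_{A}$ of \eqref{dir-space-2} is symmetric, bilinear, non-negative, and (because of the positive potential term $\kappa_{s}\geq 0$ together with the coercivity coming from the nonlocal Dirichlet form of $(-\Delta)^{s}$ type) coercive and continuous on $V_{1/2,s}=\mathbb{H}^{s}(\Omega)$; this gives conditions (a)--(c) of Definition \ref{Diri-form}, i.e.\ $\mathcal{E}_{A}$ is a closed symmetric form whose associated self-adjoint operator is precisely $L^{s}$ via \eqref{one-to-one}. Next I would verify the Markovian property (d), or equivalently (d)$'$: for a normal contraction $v$ of $u$ one has $|v(x)-v(y)|\leq|u(x)-u(y)|$ and $|v(x)|\leq|u(x)|$ pointwise, and since $K_{s}\geq 0$ and $\kappa_{s}\geq 0$ these inequalities pass directly under the double integral and the potential integral in \eqref{dir-space-2}, giving $\mathcal{E}_{A}(v,v)\leq\mathcal{E}_{A}(u,u)$; this is exactly the argument of \cite[Theorem 6.6]{War}, which I would cite. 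Thus $(\mathcal{E}_{A},V_{1/2,s})$ is a Dirichlet space.

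Then I would invoke \eqref{inf}, which identifies $\mathbb{H}^{s}(\Omega)$ with $W_{0}^{s,2}(\Omega)$ (and with $W_{00}^{1/2,2}(\Omega)$ when $s=1/2$, which embeds into $W_{0}^{1/2,2}(\Omega)$), so that the embeddings \eqref{inj1} transfer to $V_{1/2,s}$. On a bounded domain $\Omega$ these are compact for the lower $L^{p}$ exponents, in particular $V_{1/2,s}\overset{c}{\hookrightarrow}L^{2}(\Omega)$, and they supply $V_{1/2,s}\hookrightarrow L^{2q_{A}}(\Omega)$ with $q_{A}$ as recorded in \eqref{energy-embed-bis}--\eqref{energy-embed-bis-2}: $q_{A}=\infty$ when $d<2s$, any $q_{A}\in(1,\infty)$ when $d=2s$, and $q_{A}=d/(d-2s)>1$ when $d>2s$. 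Hence \eqref{Sobolev} holds and all hypotheses of Theorem \ref{thm-main} are met. With the critical value $\alpha_{0}$ then computed exactly as in \eqref{eq-alpha0} from $\theta_{A}=q_{A}/(2(q_{A}-1))$, the conclusions of Theorems \ref{theo-loc}, \ref{extension}, \ref{theo-glob-sol} and \ref{cor-str} follow immediately upon specializing their statements to $A=L^{s}$; nothing in their proofs used anything beyond the abstract framework.

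The main obstacle, such as it is, is not analytic depth but bookkeeping: one must be careful that the identification \eqref{inf} is genuinely the domain $D(L^{s})$ of the fractional power (not a naively defined Sobolev space) so that the spectral representation \eqref{frac-pow} is the one feeding into the abstract theory, and one must make sure the potential term $\int_{\Omega}\kappa_{s}uv\,dx$ in \eqref{dir-space-2} does not spoil the Markovian property --- it does not, precisely because $\kappa_{s}\geq 0$, but this sign is what needs to be pointed to. Everything else is a direct citation of \cite{CaSt} for \eqref{int-rep}, of \cite{War} for the Dirichlet-space property, and of \eqref{inj1}--\eqref{inf} for the embeddings, after which Theorem \ref{thm-main} and the four abstract theorems do all the work.
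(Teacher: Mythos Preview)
Your proposal is correct and follows essentially the same approach as the paper: the paper's ``proof'' of Theorem \ref{gen-frac-power} is precisely the verification carried out in the paragraphs of Section \ref{ex-42} preceding the theorem statement, namely that $(\mathcal{E}_{A},V_{1/2,s})$ is a Dirichlet space (via \cite{CaSt} and \cite[Theorem 6.6]{War}) and that the embeddings \eqref{inj1}, \eqref{inf} yield the hypotheses of Theorem \ref{thm-main} with the $q_{A}$ values recorded in \eqref{energy-embed-bis}--\eqref{energy-embed-bis-2}. Your additional remark explaining \emph{why} the Markovian property survives the potential term (because $K_{s}\geq 0$ and $\kappa_{s}\geq 0$ allow the normal-contraction inequalities to pass under the integrals) is a helpful elaboration that the paper leaves implicit in its citation of \cite{War}.
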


\subsection{The case of the fractional Laplace operator}

Let $0<s<1$ and the form $\mathcal{E}_{A}$ with $D(\mathcal{E}%
_{A}):=W_{0}^{s,2}(\overline{\Omega })$ be defined by%
\begin{equation*}
\mathcal{E}_{A}(u,v):=\frac{C_{d,s}}{2}\int_{{\mathbb{R}}^{d}}\int_{{\mathbb{%
R}}^{d}}\frac{(u(x)-u(y))(v(x)-v(y))}{|x-y|^{d+2s}}dxdy.
\end{equation*}%
Let $A$ be the self-adjoint operator on $L^{2}(\Omega )$ associated with $%
\mathcal{E}_{A}$ in the sense of \eqref{one-to-one}. An integration by parts
argument gives that%
\begin{equation*}
D(A)=\Big\{u\in W_{0}^{s,2}(\overline{\Omega }):\;(-\Delta )^{s}u\in
L^{2}(\Omega )\Big\},\;Au=(-\Delta )^{s}u.
\end{equation*}%
The operator $A$ is the realization in $L^{2}(\Omega )$ of the fractional
Laplace operator $(-\Delta )^{s}$ with the Dirichlet exterior condition $u=0$
in ${\mathbb{R}}^{d}\backslash \Omega $. Here, $(-\Delta )^{s}$ is given by
the following singular integral%
\begin{align*}
(-\Delta )^{s}u(x)& :=C_{d,s}\mbox{P.V.}\int_{{\mathbb{R}}^{d}}\frac{%
u(x)-u(y)}{|x-y|^{d+2s}}\;dy \\
& =C_{d,s}\lim_{\varepsilon \downarrow 0}\int_{\{y\in {\mathbb{R}}%
^{d}\;|x-y|>\varepsilon \}}\frac{u(x)-u(y)}{|x-y|^{d+2s}}\;dy,\;\;x\in {%
\mathbb{R}}^{d},
\end{align*}%
provided that the limit exists, where $C_{d,s}$ is a normalization constant.
We refer to \cite{BBC,Caf3,War} and their references for more information on
the fractional Laplace operator. It has been shown in \cite{SV2} that the
operator $A$ has a compact resolvent and its first eigenvalue $\lambda
_{1}>0 $. In addition, from \cite[Theorem 6.6]{War} and \cite[Example 2.3.5]%
{GW-F} we can deduce that $(\mathcal{E}_{A},W_{0}^{s,2}(\overline{\Omega }))$
is a Dirichlet space. From this and the embedding \eqref{inj1} we can
conclude that the operator $A$ satisfies all the assumptions in Theorem \ref%
{thm-main}. We notice that even taking $L$ (of Section \ref{ex-42}) to be
the realization of the Laplacian $\left( -\Delta \right) $ with the
Dirichlet boundary condition $u=0$ on $\partial \Omega $, the operator $%
L^{s} $ and $A$ are different in the sense that they have diffferent
eigenvalues and eigenfunctions. We refer to \cite{BWZ1,BWZ2,SV2} for more
details on this topic. As in Example \ref{ex-42}, the critical value $\alpha
_{0}$ is given exactly as in \eqref{eq-alpha0} and we can conclude once
again with the following result.

\begin{theorem}
\label{fracLap-op} All the statements of Theorems \ref{theo-loc}, \ref%
{extension}, \ref{theo-glob-sol} and Theorem \ref{cor-str} are satisfied for
the operator $A$ associated with the Dirichlet space $(\mathcal{E}%
_{A},W_{0}^{s,2}(\overline{\Omega }))$.
\end{theorem}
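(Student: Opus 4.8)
The plan is to reduce Theorem \ref{fracLap-op} to a verification that the operator $A=(-\Delta)^s$ with zero Dirichlet exterior condition $u=0$ in ${\mathbb R}^d\setminus\Omega$ falls within the abstract framework of Theorem \ref{thm-main}. Once this is done, Theorems \ref{theo-loc}, \ref{extension}, \ref{theo-glob-sol} and \ref{cor-str} apply verbatim, since all of those results were established for an \emph{arbitrary} self-adjoint operator $A$ satisfying the hypotheses of Theorem \ref{thm-main}; no feature specific to $(-\Delta)^s$ enters their proofs.

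First I would recall that, by construction, $A$ is the self-adjoint operator on $L^2(\Omega)$ associated in the sense of \eqref{one-to-one} with the symmetric, non-negative, closed, continuous and coercive bilinear form $\mathcal{E}_A$ with domain $D(\mathcal{E}_A)=W_0^{s,2}(\overline{\Omega})$, the coercivity and closedness being consequences of $\lambda_1>0$ (established in \cite{SV2}) and of the fact that $W_0^{s,2}(\overline\Omega)$ is complete for the Gagliardo norm. Next I would invoke \cite[Theorem 6.6]{War} together with \cite[Example 2.3.5]{GW-F} to conclude that $(\mathcal{E}_A,W_0^{s,2}(\overline{\Omega}))$ is in fact a Dirichlet space, i.e. property (d) (equivalently (d)$'$) of Definition \ref{Diri-form} holds: composing with a normal contraction does not increase the Gagliardo energy. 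This is the only place where a genuinely structural argument is needed, and it is already available in the cited references; I would simply quote it.

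Then I would check the two embedding requirements of Theorem \ref{thm-main}. The compactness $W_0^{s,2}(\overline{\Omega})\overset{c}{\hookrightarrow}L^2(\Omega)$ --- equivalently, that $A$ has compact resolvent, hence a discrete spectrum $0<\lambda_1\le\lambda_2\le\cdots\to\infty$ --- follows from \cite{SV2} (a fractional Rellich--Kondrachov type result, using that $\Omega$ is bounded). For the Sobolev-type embedding \eqref{Sobolev} I would simply read off $q_A$ from \eqref{inj1}, noting that the injections there hold with $W_0^{s,2}(\Omega)$ replaced by $W_0^{s,2}(\overline{\Omega})$: when $d>2s$ one has $W_0^{s,2}(\overline\Omega)\hookrightarrow L^{2d/(d-2s)}(\Omega)$, so $q_A=\frac{d}{d-2s}>1$; when $d=2s$ one has $W_0^{s,2}(\overline\Omega)\hookrightarrow L^p(\Omega)$ for every $p\in(2,\infty)$, so $q_A$ may be taken to be any number in $(1,\infty)$; and when $d<2s$ one has $W_0^{s,2}(\overline\Omega)\hookrightarrow C^{0,s-d/2}(\overline\Omega)\hookrightarrow L^\infty(\Omega)$, so $q_A=\infty$. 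In every case \eqref{Sobolev} holds.

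Having thereby verified all the hypotheses of Theorem \ref{thm-main}, the conclusion of Theorem \ref{fracLap-op} follows. As a by-product I would also record that the critical exponent $\alpha_0$ for this operator is computed exactly as in \eqref{eq-alpha0}, distinguishing $d<2s$ (where $\alpha_0=2$, so only the subcritical Case (i) range is seen), $d=2s$ (where $\alpha_0=\frac{2(q-1)}{q}$ for any $q>1$), $2s<d<4s$ (where $q_A>2$ and $\alpha_0=\frac{4s}{d}\in(1,2)$, the genuine Case (i) situation), and $d\ge 4s$ (Case (ii), no critical value). I do not expect any essential obstacle: the whole argument amounts to quoting the Dirichlet-form property from \cite{War,GW-F}, the compact embedding and the positivity of $\lambda_1$ from \cite{SV2}, and the fractional Sobolev inequality \eqref{inj1}, after which the abstract machinery of Sections \ref{main}--\ref{sec-proof-mr2} does all the remaining work; the mildest care is needed only to confirm that $\overline\Omega$ being compact gives $C^{0,s-d/2}(\overline\Omega)\hookrightarrow L^\infty(\Omega)$ in the low-dimensional regime $d<2s$.
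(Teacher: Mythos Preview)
Your proposal is correct and follows essentially the same approach as the paper: the paper likewise verifies (in the discussion immediately preceding the theorem) that $(\mathcal{E}_A,W_0^{s,2}(\overline{\Omega}))$ is a Dirichlet space via \cite[Theorem 6.6]{War} and \cite[Example 2.3.5]{GW-F}, quotes \cite{SV2} for compact resolvent and $\lambda_1>0$, reads off $q_A$ from \eqref{inj1}, and then simply invokes the abstract Theorems \ref{theo-loc}--\ref{cor-str}. The computation of $\alpha_0$ you outline also matches \eqref{eq-alpha0} exactly.
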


\subsection{The case of the Laplace operator with Wentzell boundary conditions}

In all the above examples, we have that $X=\Omega\subset{\mathbb{R}}^d$ is a
bounded open set. In this section we give an example where $X=\overline{%
\Omega}$, that is, the closure of a bounded open set $\Omega\subset{\mathbb{R%
}}^d$.

Assume that $\Omega \subset {\mathbb{R}}^{d}$ is a bounded domain with a
Lipschitz continuous boundary. Let $\beta \in L^{\infty }(\partial \Omega )$
be such that $\beta (x)\geq \beta _{0}>0$ for $\sigma $-a.e. on $\partial
\Omega $, where $\beta _{0}$ is a constant. Let $\delta \in \{0,1\}$ and
\begin{equation*}
\mathbb{W}^{1,\delta ,2}(\overline{\Omega }):=\Big\{U=(u,u|_{\partial \Omega
}):\;u\in W^{1,2}(\Omega ),\;\delta u|_{\partial \Omega }\in
W^{1,2}(\partial \Omega )\Big\},
\end{equation*}%
be endowed with the norm
\begin{equation*}
\Vert u\Vert _{\mathbb{W}^{1,\delta ,2}(\overline{\Omega })}=%
\begin{cases}
\left( \Vert u\Vert _{W^{1,2}(\Omega )}^{2}+\Vert u\Vert _{W^{1,2}(\partial
\Omega )}^{2}\right) ^{\frac{1}{2}}\;\; & \mbox{ if }\;\delta =1 \\
\left( \Vert u\Vert _{W^{1,2}(\Omega )}^{2}+\Vert u\Vert _{W^{\frac{1}{2}%
,2}(\partial \Omega )}^{2}\right) ^{\frac{1}{2}}\;\; & \mbox{ if }\;\delta
=0.%
\end{cases}%
\end{equation*}%
Then
\begin{equation}
\mathbb{W}^{1,0,2}(\overline{\Omega })\hookrightarrow L^{q}(\Omega )\times
L^{q}(\partial \Omega ),  \label{went-emb-0}
\end{equation}%
with
\begin{equation}
1\leq q\leq \frac{2(d-1)}{d-2}\;\mbox{ if }\;d>2\;\mbox{ and }1\leq q<\infty
\;\mbox{ if }\;d\leq 2,  \label{q-0}
\end{equation}%
and
\begin{equation}
\mathbb{W}^{1,1,2}(\overline{\Omega })\hookrightarrow L^{q}(\Omega )\times
L^{q}(\partial \Omega ),  \label{went-emb-1}
\end{equation}%
with
\begin{equation}
1\leq q\leq \frac{2d}{d-2}\;\mbox{ if }\;d>2\;\mbox{ and }1\leq q<\infty \;%
\mbox{ if }\;d\leq 2.  \label{q-1}
\end{equation}%
Let $\mathcal{E}_{\delta ,W}$ with $D(\mathcal{E}_{\delta ,W}):=\mathbb{W}%
^{1,\delta ,2}(\overline{\Omega })$ be given by
\begin{equation}
\mathcal{E}_{\delta ,W}(U,V):=\int_{\Omega }\nabla u\cdot \nabla
v\;dx+\delta \int_{\partial \Omega }\nabla _{\Gamma }u\cdot \nabla _{\Gamma
}v\;d\sigma +\int_{\partial \Omega }\beta (x)uv\;d\sigma .  \label{form-went}
\end{equation}%
Let $\Delta _{\delta ,W}$ be the self-adjoint operator in $L^{2}(\Omega
)\times L^{2}(\partial \Omega )$ associated with $\mathcal{E}_{\delta ,W}$
in the sense of \eqref{one-to-one}. Then $\Delta _{\delta ,W}$ is a
realization in $L^{2}(\Omega )\times L^{2}(\partial \Omega )$ of $\Big(%
-\Delta ,-\delta \Delta _{\Gamma }\Big)$ with the generalized Wentzell
boundary conditions. More precisely, we have that
\begin{align*}
D(\Delta _{\delta ,W})=\Big\{U=(u,u|_{\Gamma })\in \mathbb{W}^{1,\delta ,2}(%
\overline{\Omega }),\;& \Delta u\in L^{2}(\Omega ), \\
& \mbox{ and }\;-\delta \Delta _{\Gamma }(u|_{\partial \Omega })+\partial
_{\nu }u+\beta (u|_{\partial \Omega })\in L^{2}(\partial \Omega )\Big\},
\end{align*}%
and
\begin{equation*}
\Delta _{\delta ,W}U=\Big(-\Delta u,-\delta \Delta _{\Gamma }(u|_{\partial
\Omega })+\partial _{\nu }u+\beta (u|_{\partial \Omega })\Big).
\end{equation*}%
We notice that for $1\leq q\leq \infty $, the space $L^{q}(\Omega )\times
L^{q}(\partial \Omega )$ endowed with the norm
\begin{equation*}
\Vert (f,g)\Vert _{L^{q}(\Omega )\times L^{q}(\partial \Omega }=%
\begin{cases}
\left( \Vert f\Vert _{L^{q}(\Omega )}^{q}+\Vert g\Vert _{L^{q}(\partial
\Omega )}^{q}\right) ^{1/q}\;\; & \mbox{ if }\;1\leq q<\infty , \\
\max \{\Vert f\Vert _{L^{\infty }(\Omega )},\Vert g\Vert _{L^{\infty
}(\Omega )}\} & \mbox{ if }\;q=\infty ,%
\end{cases}%
\end{equation*}%
can be identified with $L^{q}(\overline{\Omega },m)$ where the measure $m$
on $\overline{\Omega }$ is defined for a measurable set $M\subset \overline{%
\Omega }$ by $m(M)=|\Omega \cap M|+\sigma (\partial \Omega \cap M)$. By \cite%
{War-SF}, $(\mathcal{E}_{0,W},\mathbb{W}^{1,0,2}(\overline{\Omega }))$ and $(%
\mathcal{E}_{1,W},\mathbb{W}^{1,1,2}(\overline{\Omega }))$ are Dirichlet
spaces on $L^{2}(\overline{\Omega },m)$. Hence, it follows from the
coercivity of the form \eqref{form-went} and the embeddings %
\eqref{went-emb-0} and \eqref{went-emb-1} with $q$ given in \eqref{q-0} and %
\eqref{q-1} that the Dirichlet spaces $(\mathcal{E}_{0,W},\mathbb{W}^{1,0,2}(%
\overline{\Omega }))$ and $(\mathcal{E}_{1,W},\mathbb{W}^{1,1,2}(\overline{%
\Omega }))$ satisfy all the assumptions in Theorem \ref{thm-main}. In the
case $A=\Delta _{0,W}$, letting $V_{1/2}:=\mathbb{W}^{1,0,2}(\overline{%
\Omega })$ we have that
\begin{align}
V_{1/2}& \hookrightarrow L^{\infty }\left( \overline{\Omega },m\right) \text{
when }d=1,\text{ and so }q_{A}=\infty ,  \label{energy-embed-tris} \\
V_{1/2}& \hookrightarrow L^{2q}\left( \overline{\Omega },m\right) \text{,
when }d=2,\text{ and so }q_{A}=q\in \left( 1,\infty \right) ,
\label{energy-embed-tris-1} \\
V_{1/2}& \hookrightarrow L^{\frac{2(d-1)}{d-2}}\left( \overline{\Omega }%
,m\right) \text{, when }d>2,\text{ and so }q_{A}=\frac{d-1}{d-2}.
\label{energy-embed-tris-2}
\end{align}%
In \eqref{energy-embed-tris-2} we have $1<q_{A}\leq 2$ for all $d>2$.
According to Section \ref{main} and taking into account %
\eqref{energy-embed-tris}, \eqref{energy-embed-tris-1} and %
\eqref{energy-embed-tris-2}, we have that

\begin{equation}
\begin{cases}
\alpha _{0}=2\; & \mbox{ if }\;d=1, \\
\alpha _{0}=\frac{2(q-1)}{q}\;\;\mbox{ for any }\;\;q\in (1,\infty )\;\; & %
\mbox{  if }\;d=2, \\
\mbox{no critical value} & \mbox{ if }\;d\geq 3.%
\end{cases}
\label{eq-alpha00}
\end{equation}

If $A=\Delta _{1,W}$, then letting $V_{1/2}:=\mathbb{W}^{1,1,2}(\overline{%
\Omega })$ we have that
\begin{align}
V_{1/2}& \hookrightarrow L^{\infty }\left( \overline{\Omega },m\right) \text{
when }d=1,\text{ and so }q_{A}=\infty ,  \label{energy-embed-tris-10} \\
V_{1/2}& \hookrightarrow L^{2q}\left( \overline{\Omega },m\right) \text{,
when }d=2,\text{ and so }q_{A}=q\in \left( 1,\infty \right) ,
\label{energy-embed-tris11} \\
V_{1/2}& \hookrightarrow L^{\frac{2d}{d-2}}\left( \overline{\Omega }%
,m\right) \text{, when }d>2,\text{ and so }q_{A}=\frac{d}{d-2}.
\label{energy-embed-tris-2-1}
\end{align}%
In \eqref{energy-embed-tris-2-1}, $q_{A}=3>2$ $\Leftrightarrow d=3$. Taking
into account \eqref{energy-embed-tris-10}, \eqref{energy-embed-tris11} and %
\eqref{energy-embed-tris-2-1}, we have that

\begin{equation}
\begin{cases}
\alpha _{0}=2\; & \mbox{ if }\;d=1, \\
\alpha _{0}=\frac{2(q-1)}{q}\;\;\mbox{ for any }\;\;q\in (1,\infty )\;\; & %
\mbox{  if }\;d=2, \\
\alpha _{0}=\frac{4}{3}\; & \mbox{ if }\;d=3, \\
\mbox{no critical value} & \mbox{ if }\;d\geq 4.%
\end{cases}
\label{eq-alpha00-1}
\end{equation}

The fractional semi-linear wave equation associated with the "Wentzell"
operator $\Delta _{\delta ,W}$\ now reads:%
\begin{equation}
\left\{
\begin{array}{ll}
\mathbb{D}_{t}^{\alpha }v\left( x,t\right) -\Delta v\left( x,t\right)
=f_{1}(v\left( x,t\right) ), & t\in \left( 0,T\right) ,\text{ }x\in \Omega ,
\\
w\left( \cdot ,t\right) =v\left( \cdot ,t\right) |_{\partial \Omega }, &
t\in \left( 0,T\right) , \\
\mathbb{D}_{t}^{\alpha }w\left( x,t\right) -\delta \Delta _{\Gamma }w\left(
x,t\right) +\partial _{\nu }v\left( x,t\right) +\beta w\left( x,t\right)
=f_{2}\left( w\left( x,t\right) \right) , & t\in \left( 0,T\right) ,\text{ }%
x\in \partial \Omega , \\
v(\cdot ,0)=v_{0}, & \text{in }\Omega, \\
w(\cdot ,0)=w_{0}, & \text{on }\partial \Omega,%
\end{array}%
\right.  \label{frac-wentz}
\end{equation}%
such that%
\begin{equation}
\partial _{t}v(\cdot ,0)=v_{1}\;\text{ in}\;\Omega,\;\partial _{t}w(\cdot
,0)=w_{1}\;\mbox{ on }\;\partial \Omega.  \label{frac-wentz2}
\end{equation}%
The nonlinear functions $f_{1},f_{2}\in C^{1}$ satisfy the assumptions (%
\textbf{Hf1})-(\textbf{Hf2}) (with the same growth exponent and functions $%
Q_{1},Q_{2}$). Then setting
\begin{equation*}
u=\left( v,w\right) ,\text{ }f\left( u\right) =\left( f_{1}\left( v\right)
,f_{2}\left( w\right) \right)
\end{equation*}%
and $u_{0}=\left( v_{0},w_{0}\right) ,$ $u_{1}=\left( v_{1},w_{1}\right) $,
one can formally rewrite \eqref{frac-wentz}-\eqref{frac-wentz2} as problem %
\eqref{EQ-NL}.

We can then conclude with the following result.

\begin{theorem}
\label{fracLap-op-W} All the statements of Theorems \ref{theo-loc}, \ref%
{extension}, \ref{theo-glob-sol} and Theorem \ref{cor-str} are satisfied for
the semilinear problem \eqref{frac-wentz}-\eqref{frac-wentz2}.
\end{theorem}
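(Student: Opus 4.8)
The proof amounts to recognizing \eqref{frac-wentz}--\eqref{frac-wentz2} as a particular instance of the abstract system \eqref{EQ-NL} and then invoking Theorems \ref{theo-loc}, \ref{extension}, \ref{theo-glob-sol} and \ref{cor-str}. Concretely, one takes $X=\overline{\Omega}$ with the measure $m(M)=|\Omega\cap M|+\sigma(\partial\Omega\cap M)$, so that $L^2(\overline{\Omega},m)$ is identified with $L^2(\Omega)\times L^2(\partial\Omega)$ as in Section \ref{ex}; one sets $A=\Delta_{\delta,W}$, $u=(v,w)$, $f(u)=(f_1(v),f_2(w))$, $u_0=(v_0,w_0)$ and $u_1=(v_1,w_1)$. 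First I would check that $A=\Delta_{\delta,W}$ satisfies all the hypotheses of Theorem \ref{thm-main}. It is the nonnegative self-adjoint operator associated in the sense of \eqref{one-to-one} with the Dirichlet space $(\mathcal{E}_{\delta,W},\mathbb{W}^{1,\delta,2}(\overline{\Omega}))$, by \cite{War-SF}. Since $\beta\geq\beta_0>0$ on $\partial\Omega$, the form \eqref{form-went} is coercive and hence $\lambda_1>0$. By the Rellich-type compactness of $W^{1,2}(\Omega)\hookrightarrow L^2(\Omega)$ and of the trace maps into $L^2(\partial\Omega)$, the embedding $V_{1/2}\overset{c}{\hookrightarrow}L^2(\overline{\Omega},m)$ holds, while \eqref{went-emb-0}--\eqref{q-1} supply the Sobolev embedding \eqref{Sobolev} with $q_A$ given by \eqref{energy-embed-tris}--\eqref{energy-embed-tris-2} when $\delta=0$ and by \eqref{energy-embed-tris-10}--\eqref{energy-embed-tris-2-1} when $\delta=1$; the associated critical value $\alpha_0$ is then the one recorded in \eqref{eq-alpha00} or \eqref{eq-alpha00-1}.

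Next I would verify that the reformulation is faithful. Because $\mathbb{D}_t^\alpha$ acts only in the time variable, $\mathbb{D}_t^\alpha u=(\mathbb{D}_t^\alpha v,\mathbb{D}_t^\alpha w)$ componentwise, and by the description of $D(\Delta_{\delta,W})$ one has $\Delta_{\delta,W}U=(-\Delta v,\,-\delta\Delta_\Gamma w+\partial_\nu v+\beta w)$; hence the abstract identity $\mathbb{D}_t^\alpha u+Au=f(u)$ rearranges into exactly the first and third equations of \eqref{frac-wentz}, the compatibility relation $w(\cdot,t)=v(\cdot,t)|_{\partial\Omega}$ being automatically built into the structure $U=(u,u|_{\partial\Omega})$ of the elements of $\mathbb{W}^{1,\delta,2}(\overline{\Omega})$. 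Testing the variational identity \eqref{Var-I-nl} against $\Phi=(\varphi,\varphi|_{\partial\Omega})\in V_\gamma$ and invoking \eqref{form-went} reproduces the natural weak formulation of \eqref{frac-wentz}--\eqref{frac-wentz2}, so that weak (respectively strong) solutions of the abstract problem correspond precisely to weak (respectively strong) solutions of the coupled bulk--boundary system, and the initial data $u_0\in V_\gamma$, $u_1\in L^2(\overline{\Omega},m)$ translate into $v_0,w_0,v_1,w_1$ in the appropriate trace and Lebesgue spaces.

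It remains to check that $f(u)=(f_1(v),f_2(w))$ inherits \textbf{(Hf1)}/\textbf{(Hf2)} from $f_1,f_2$. Evaluating componentwise, $f(u)(x)=f_1(u(x))$ for $x\in\Omega$ and $f(u)(x)=f_2(u(x))$ for $x\in\partial\Omega$, so $f\in C^1$, $f(0)=0$, and the pointwise bounds $|f'(u)(x)|\leq C|u(x)|^{r-1}$ (respectively $|f'(u)(x)|\leq Q_1(|u(x)|)$ and $|f(u)(x)|\leq Q_2(|u(x)|)$) hold $m$-a.e.\ on $\overline{\Omega}$ with $C=\max\{C_1,C_2\}$ and with the common functions $Q_1,Q_2$; since $f_1$ and $f_2$ are assumed to share the same growth exponent $r$, this is exactly \textbf{(Hf1)} (respectively \textbf{(Hf2)}) for $f$. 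With every abstract hypothesis verified, Theorems \ref{theo-loc}, \ref{extension}, \ref{theo-glob-sol} and \ref{cor-str} apply directly and yield the assertion.

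I do not anticipate a genuine obstacle here: the only point demanding some care is the bookkeeping that identifies $L^p(\Omega)\times L^p(\partial\Omega)$ with $L^p(\overline{\Omega},m)$ and confirms that the abstract variational identity \eqref{Var-I-nl}, once unpacked, coincides with the weak formulation of \eqref{frac-wentz}; this is immediate from the definition \eqref{form-went} of $\mathcal{E}_{\delta,W}$ together with the Gauss--Green formula used to characterize $D(\Delta_{\delta,W})$.
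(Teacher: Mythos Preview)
Your proposal is correct and follows exactly the approach of the paper: the theorem is stated without proof, the entire argument being implicit in the preceding discussion which verifies that $(\mathcal{E}_{\delta,W},\mathbb{W}^{1,\delta,2}(\overline{\Omega}))$ is a Dirichlet space satisfying the hypotheses of Theorem \ref{thm-main}, computes the critical value $\alpha_0$, and rewrites \eqref{frac-wentz}--\eqref{frac-wentz2} in the abstract form \eqref{EQ-NL}. Your write-up is in fact more detailed than the paper's, spelling out the verification of \textbf{(Hf1)}/\textbf{(Hf2)} for the vector-valued nonlinearity and the correspondence between the abstract and concrete variational formulations.
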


\subsection{The case of the Dirichlet-to-Neumann operator}

Here we give an example where the metric space $X$ is given by the boundary
of an open set. Let $\Delta _{D}$ be the operator defined in Example \ref%
{ex-41} with $L=-\Delta $. We denote its spectrum by $\sigma (\Delta _{D})$.
Let $\lambda \in {\mathbb{R}}\backslash \sigma (\Delta _{D})$, $g\in
L^{2}(\partial \Omega )$ and let $u\in W^{1,2}(\Omega )$ be the weak
solution of the following Dirichlet problem
\begin{equation}
-\Delta u=\lambda u\;\mbox{ in }\;\Omega ,\;\;\;u|_{\partial \Omega }=g.
\label{CDP}
\end{equation}%
The classical Dirichlet-to-Neumann map is the operator $\mathbb{D}%
_{1,\lambda }$ on $L^{2}(\partial \Omega )$ with domain
\begin{align*}
D(\mathbb{D}_{1,\lambda })=\Big\{g\in L^{2}(\partial \Omega ),\;\exists
\;u\in W^{1,2}(\Omega )\;& \mbox{ solution of }\;\eqref{CDP} \\
& \mbox{ and }\;\partial _{\nu }u\;\mbox{
exists in }\;L^{2}(\partial \Omega )\Big\},
\end{align*}%
and given by
\begin{equation*}
\mathbb{D}_{1,\lambda }g=\partial _{\nu }u.
\end{equation*}%
It is well known that one has the following orthogonal decomposition
\begin{equation*}
W^{1,2}(\Omega )=W_{0}^{1,2}(\Omega )\oplus \mathcal{H}^{1,\lambda }(\Omega
),
\end{equation*}%
where
\begin{equation*}
\mathcal{H}^{1,\lambda }(\Omega )=\Big\{u\in W^{1,2}(\Omega ),\;\;-\Delta
u=\lambda u\Big\},
\end{equation*}%
and by $-\Delta u=\lambda u$ we mean that
\begin{equation*}
\int_{\Omega }\nabla u\cdot \nabla udx=\lambda \int_{\Omega }uvdx,\;\forall
\;v\in W_{0}^{1,2}(\Omega ).
\end{equation*}%
Let
\begin{equation*}
W^{\frac{1}{2},2}(\partial \Omega ):=\Big\{u|_{\partial \Omega },\;u\in
W^{1,2}(\Omega )\Big\}
\end{equation*}%
be the trace space. Since $\lambda \in {\mathbb{R}}\backslash \sigma (\Delta
_{D})$, we have that the trace operator restricted to $\mathcal{H}%
^{1,\lambda }(\Omega )$, that is, the mapping $u\in \mathcal{H}^{1,\lambda
}(\Omega )\mapsto u|_{\partial \Omega }\in W^{\frac{1}{2},2}(\partial \Omega
)$, is linear and bijective. Letting
\begin{equation*}
\Vert u|_{\partial \Omega }\Vert _{W^{\frac{1}{2},2}(\partial \Omega
)}=\Vert u\Vert _{\mathcal{H}^{1,\lambda }(\Omega )}\emph{,}
\end{equation*}%
then $W^{\frac{1}{2},2}(\partial \Omega )$ becomes a Hilbert space. By the
closed graph theorem, different choice of $\lambda \in {\mathbb{R}}%
\backslash \sigma (\Delta _{D})$ leads to an equivalent norm on $W^{\frac{1}{%
2},2}(\partial \Omega )$. Moreover, we have the embedding $W^{\frac{1}{2}%
,2}(\partial \Omega )\overset{c}{\hookrightarrow }L^{2}(\partial \Omega ) $.
In addition we have the embedding
\begin{equation}
W^{\frac{1}{2},2}(\partial \Omega )\hookrightarrow L^{q}(\partial \Omega )
\label{sob-tr}
\end{equation}%
with
\begin{equation}
1\leq q\leq \frac{2(d-1)}{d-2}\;\mbox{ if }\;d>2\;\mbox{ and }\;1\leq
q<\infty \;\mbox{
if }\;d\leq 2.  \label{sob-tr-q}
\end{equation}%
It has been shown in \cite{AR-CPAA} that $\mathbb{D}_{1,\lambda }$ is the
self-adjoint operator on $L^{2}(\partial \Omega )$ associated with the
bilinear symmetric and continuous form $\mathcal{E}_{1,\lambda }$ with
domain $W^{\frac{1}{2},2}(\partial \Omega )$ given by
\begin{equation*}
\mathcal{E}_{1,\lambda }(\varphi ,\psi )=\int_{\Omega }\nabla u\cdot \nabla
vdx-\lambda \int_{\Omega }uvdx,
\end{equation*}%
where $\varphi ,\psi \in W^{\frac{1}{2},2}(\partial \Omega )$ and $u,v\in
\mathcal{H}^{1,\lambda }(\Omega )$ are such that $u|_{\partial \Omega
}=\varphi $ and $v|_{\partial \Omega }=\psi $. The operator $-\mathbb{D}%
_{1,\lambda }$ generates a strongly continuous and analytic semigroup on $%
L^{2}(\partial \Omega )$ which is also submarkovian if $\lambda \leq 0$. If $%
\lambda <0$ we also have that $\mathcal{E}_{1,\lambda }$ is coercive. For
more information on the Dirichlet-to-Neumann map we refer to \cite%
{AR-CPAA,Ar-tom1,Ar-Tom2} and their references.

Assuming that $\lambda<0$, using \eqref{sob-tr} and \eqref{sob-tr-q}, we get
that $(\mathcal{E}_{1,\lambda },W^{\frac{1}{2},2}(\partial \Omega))$ is a
Dirichlet space and it satisfies all the conditions in Theorem \ref{thm-main}
with $X=\partial \Omega$, $m$ is the Lebesgue surface measure and $A=\mathbb{%
D}_{1,\lambda}$. Letting $V_{1/2}:=W^{\frac{1}{2},2}(\partial \Omega))$, we
have that

\begin{align}
V_{1/2}& \hookrightarrow L^{\infty }\left( \partial \Omega \right) \text{
when }d=1,\text{ and so }q_{A}=\infty ,  \label{embed-tris} \\
V_{1/2}& \hookrightarrow L^{2q}\left( \partial \Omega \right) \text{, when }%
d=2,\text{ and so }q_{A}=q\in \left( 1,\infty \right),  \label{embed-tris-1}
\\
V_{1/2}& \hookrightarrow L^{\frac{2(d-1)}{d-2}}\left( \partial \Omega
\right) \text{, when }d>2,\text{ and so }q_{A}=\frac{d-1}{d-2}.
\label{embed-tris-2}
\end{align}
In \eqref{embed-tris-2}, $1<q_A\le 2$ for all $d>2$. According to Section %
\ref{main} and taking into account \eqref{embed-tris}, \eqref{embed-tris-1}
and \eqref{embed-tris-2}, we can deduce that

\begin{equation}
\begin{cases}
\alpha _{0}=2\; & \mbox{ if }\;d=1, \\
\alpha _{0}=\frac{2(q-1)}{q}\;\;\mbox{ for any }\;\;q\in (1,\infty )\;\; & %
\mbox{  if }\;d=2, \\
\mbox{no critical value} & \mbox{ if }\;d\geq 3.%
\end{cases}
\label{eq-alphaDN}
\end{equation}

We can once again conclude with the following result for (\ref{EQ-NL}).

\begin{theorem}
\label{fracLap-op-DN} All the statements of Theorems \ref{theo-loc}, \ref%
{extension}, \ref{theo-glob-sol} and Theorem \ref{cor-str} are satisfied for
the operator $A$ associated with the Dirichlet space $(\mathcal{E}
_{1,\lambda },W^{\frac{1}{2},2}(\partial \Omega))$.
\end{theorem}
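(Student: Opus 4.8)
The plan is to reduce the whole statement to a single verification: that the operator $A=\mathbb{D}_{1,\lambda}$ (with $\lambda<0$ fixed, as in the text) together with the form $(\mathcal{E}_{1,\lambda},W^{\frac{1}{2},2}(\partial\Omega))$ satisfies all the hypotheses of Theorem \ref{thm-main}. Once this is done, Theorems \ref{theo-loc}, \ref{extension}, \ref{theo-glob-sol} and \ref{cor-str} apply verbatim, since each of them is stated precisely "under the assumptions of Theorem \ref{thm-main}" (for Theorem \ref{theo-loc}) or "under the assumptions of Theorem \ref{theo-loc}" (for the remaining three), with no further structural requirement on $A$. Concretely, I would take $X=\partial\Omega$, a compact metric space for the induced Euclidean distance; $m$ the $(d-1)$-dimensional Lebesgue surface measure, which is finite since $\partial\Omega$ is bounded and has full support since $\partial\Omega$ is Lipschitz; and $A=\mathbb{D}_{1,\lambda}$ acting on $L^2(\partial\Omega)=L^2(\partial\Omega,m)$ with $V_{1/2}:=W^{\frac{1}{2},2}(\partial\Omega)$.

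First I would collect what is already recorded in the text: by \cite{AR-CPAA}, $\mathbb{D}_{1,\lambda}$ is the self-adjoint operator on $L^2(\partial\Omega)$ associated, in the sense of \eqref{one-to-one}, with the bilinear, symmetric and continuous form $\mathcal{E}_{1,\lambda}$ on $W^{\frac{1}{2},2}(\partial\Omega)$, and for $\lambda<0$ this form is coercive and $-\mathbb{D}_{1,\lambda}$ generates a submarkovian (strongly continuous, analytic) semigroup on $L^2(\partial\Omega)$. Then I would check the conditions of Definition \ref{Diri-form}. Density of $W^{\frac{1}{2},2}(\partial\Omega)$ in $L^2(\partial\Omega)$, symmetry and bilinearity are immediate. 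Non-negativity follows by writing, for $\varphi\in W^{\frac{1}{2},2}(\partial\Omega)$ with $\lambda$-harmonic extension $u\in\mathcal{H}^{1,\lambda}(\Omega)$, that $\mathcal{E}_{1,\lambda}(\varphi,\varphi)=\int_{\Omega}|\nabla u|^2\,dx+|\lambda|\int_{\Omega}u^2\,dx\ge 0$. Closedness follows because coercivity of $\mathcal{E}_{1,\lambda}$, together with the very definition of the norm on $W^{\frac{1}{2},2}(\partial\Omega)$ through the norm of the $\lambda$-harmonic extension, makes $\mathcal{E}_{1,\lambda,\mu}(\cdot,\cdot)^{1/2}$ an equivalent Hilbert norm on the complete space $W^{\frac{1}{2},2}(\partial\Omega)$ for every $\mu>0$. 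For the Markovian condition (d) (equivalently (d$'$) of Remark \ref{rem1}, since the form is closed) I would invoke the Beurling--Deny criterion: a closed symmetric form is a Dirichlet form precisely when the semigroup generated by minus the associated operator is submarkovian, and submarkovianity of $(e^{-t\mathbb{D}_{1,\lambda}})_{t\ge 0}$ for $\lambda\le 0$ is exactly what \cite{AR-CPAA} supplies. Hence $(\mathcal{E}_{1,\lambda},W^{\frac{1}{2},2}(\partial\Omega))$ is a Dirichlet space in the sense used here.

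Next I would verify the two embedding hypotheses of Theorem \ref{thm-main}. The compact embedding $W^{\frac{1}{2},2}(\partial\Omega)\overset{c}{\hookrightarrow}L^2(\partial\Omega)$ is the standard compact fractional Sobolev embedding on the compact $(d-1)$-dimensional Lipschitz manifold $\partial\Omega$, already recorded in the text just after \eqref{CDP}. For \eqref{Sobolev}, the embeddings \eqref{sob-tr}--\eqref{sob-tr-q} give $W^{\frac{1}{2},2}(\partial\Omega)\hookrightarrow L^{2q_A}(\partial\Omega)$ with $q_A=\frac{d-1}{d-2}>1$ when $d>2$, and with $q_A$ an arbitrary number in $(1,\infty)$ when $d\le 2$; in all cases $q_A>1$, so \eqref{Sobolev} holds. (Should $0\in\sigma(\mathbb{D}_{1,\lambda})$, one replaces $A$ by $A+\varepsilon I$ as in the remark following Theorem \ref{thm-main}, which leaves the Dirichlet-form structure untouched.) With every hypothesis of Theorem \ref{thm-main} verified, and with the critical value $\alpha_0$ being exactly the one computed in \eqref{eq-alphaDN}, all the statements of Theorems \ref{theo-loc}, \ref{extension}, \ref{theo-glob-sol} and \ref{cor-str} hold for $A=\mathbb{D}_{1,\lambda}$.

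The only step that is not entirely routine is the Markovian (Beurling--Deny) property. One cannot argue naively that a normal contraction $v$ of a boundary datum $\varphi\in W^{\frac{1}{2},2}(\partial\Omega)$ has $\lambda$-harmonic extension equal to the normal contraction of the extension of $\varphi$ — that is false in general — so a direct check of (d$'$) would be awkward. The clean route, and the one I would take, is to quote the submarkovianity of the Dirichlet-to-Neumann semigroup from \cite{AR-CPAA} and translate it into the Dirichlet-form property via the Beurling--Deny criterion; everything else (self-adjointness, closedness, the two Sobolev-type embeddings, and the computation of $\alpha_0$) has already been assembled in this section.
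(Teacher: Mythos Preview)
Your proposal is correct and follows essentially the same approach as the paper: the paper gives no separate proof of this theorem, but simply asserts in the preceding paragraph that, assuming $\lambda<0$ and using the embeddings \eqref{sob-tr}--\eqref{sob-tr-q}, the pair $(\mathcal{E}_{1,\lambda},W^{\frac{1}{2},2}(\partial\Omega))$ is a Dirichlet space satisfying all the conditions of Theorem \ref{thm-main} with $X=\partial\Omega$, $m$ the surface measure and $A=\mathbb{D}_{1,\lambda}$, after which the computation of $\alpha_0$ in \eqref{eq-alphaDN} makes the main theorems applicable. Your write-up is more explicit---in particular your careful handling of the Markovian property via the Beurling--Deny criterion and the submarkovianity from \cite{AR-CPAA}---but the route is identical.
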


\subsection{Final remarks}

In this contribution, we have developed a theory for well-posedness of weak
solutions that further generalizes the theory of integral solutions
developed in \cite[Theorem 1.5]{Ki-Ya}. In particular, our theory allows us
to include many interesting examples of self-adjoint operators that can be
currently found in the scientific literature. However, we emphasize once
again that our assumptions that we employ on $A$ and/or $\left( X,m\right) $
are of general character, and as a result do not require a specific form as
suggested by the examples of Section \ref{ex}; this abstraction allows (\ref%
{EQ-NL0}) to represent a much larger family of super-diffusive\ equations,
that have not been explicitly studied anywhere in detail, including
fractional wave equations associated with operators on (compact) Riemannian
manifolds with or without boundary. Other examples of operators that satisfy
the assumptions of Theorem \ref{thm-main} can be also found in \cite{GW-F}.
Among them one can find other non-standard operators of \textquotedblright
fractional\textquotedblright\ type subject to appropriate boundary
conditions.

%In our assumptions in Theorem \ref{thm-main}, we have assumed that the operator $A$ is associated with a Dirichlet space $(\mathcal E_A,V_{1/2})$. We would like to mention that the submarkovian property of the associated semigroup $(e^{-tA})_{t\ge 0}$ is not used in the present article. All our results remain true for any operator $A$ associated to a non-negative, bilinear, symmetric and coercive form $(\mathcal E_A,V_{1/2})$ such that the of domain of the form $V_{1/2}$ satifi

We finally remark that in contrast to the classical case, the following is
the only integration by parts formula available for fractional derivatives
defined in the sense of (\ref{fra}):
\begin{equation*}
\int_{0}^{T}v(t)\mathbb{D}_{t}^{\alpha
}u(t)dt=\int_{0}^{T}u(t)D_{t,T}^{\alpha }v(t)dt+\left[ u^{\prime
}(t)I_{t,T}^{2-\alpha }v(t)+u(t)D_{t,T}^{\alpha -1}v(t)\right] _{t=0}^{t=T},
\end{equation*}%
provided that the left and right-hand side expressions make sense. Here, $%
D_{t,T}^{\beta }$ and $I_{t,T}^{\beta }$ denote the right Riemann-Liouiville
fractional derivative and fractional integral of order $\beta >0$,
respectively (see e.g. \cite{KW}). Therefore, at present it is not clear how
to verify the conditions of Theorem \ref{theo-glob-sol} in order to deduce
global-in-time existence results for problem (\ref{EQ-NL0}) with \emph{%
arbitrary} initial data.

\section{Proofs in the case (i) when $1<\protect\alpha <\protect\alpha _{0}$}

\label{sec-proof-mr}

In this section we give the proofs of the main results stated in Section \ref%
{main} in the sub-critical case $1<\alpha <\alpha _{0}$ for \textbf{Case (i)}.

\begin{proof}[\textbf{Proof of Theorem \protect\ref{theo-loc}}]
Fix $0<T^{\star }\leq T$. Consider the space
\begin{align*}
\mathbb{X}:=&\Big\{ u\in C([0,T^{\star }];V_{\gamma })\cap C^{1}([0,T^{\star
}];L^{2}(X)):\;u(\cdot ,0)=u_{0},\;\partial_tu(\cdot ,0)=u_{1}\mbox{ and } \\
&\qquad\qquad\qquad\qquad\qquad\qquad \Vert u(\cdot ,t)\Vert _{V_{\gamma }}+\Vert \partial_tu(\cdot ,t)\Vert
_{L^{2}(X)}\leq R^{\star }\;\;\forall \;t\in \lbrack 0,T^{\star }]\Big\} ,
\end{align*}%
for some $R^{\star }>0$, and define the mapping $\Phi $ on $\mathbb{X}$ by
\begin{align}
\Phi (u)(t)=& \sum_{n=1}^{\infty }u_{0,n}E_{\alpha ,1}(-\lambda
_{n}t^{\alpha })\varphi _{n}+\sum_{n=1}^{\infty }u_{1,n}tE_{\alpha
,2}(-\lambda _{n}t^{\alpha })\varphi _{n}  \label{A1} \\
& +\sum_{n=1}^{\infty }\left( \int_{0}^{t}f_{n}(u(\tau ))(t-\tau )^{\alpha
-1}E_{\alpha ,\alpha }(-\lambda _{n}(t-\tau )^{\alpha })\;d\tau \right)
\varphi _{n}.  \notag
\end{align}%
Firstly, it is clear that
\begin{equation}
\Vert u\Vert _{\mathbb{X}}:=\sup_{t\in \lbrack 0,T^{\star }]}\left( \Vert
u(\cdot ,t)\Vert _{V_{\gamma }}+\Vert \partial_tu(\cdot ,t)\Vert
_{L^{2}(X)}\right)  \label{norm}
\end{equation}%
defines a norm on $\mathbb{X}$. Secondly, it follows from (\ref%
{embed-supnorm}) that there is a constant $C>0$ such that
\begin{equation*}
\Vert w\Vert _{L^{\infty }(X)}\leq C\Vert w\Vert _{V_{\gamma }},\;\;\forall
\;w\in V_{\gamma }.
\end{equation*}%
Note that $\mathbb{X}$ when endowed with the norm in \eqref{norm} is a
closed subspace of the Banach space $C([0,T^{\star }];V_{\gamma })\cap
C^{1}([0,T^{\star }];L^{2}(X))$. We prove the existence of a locally defined
solution of \eqref{EQ-NL} by a fixed point argument.

\textbf{Step 1}. Since $f$ is continuously differentiable, we have that $%
\Phi (u)(t)$ is continuously differentiable on $[0,T^{\star }]$. We will
show that by an appropriate choice of $T^{\star },R^{\star }>0$, $\Phi :%
\mathbb{X}\rightarrow \mathbb{X}$ is a contraction with respect to the
metric induced by the norm of $C([0,T^{\star }];V_{\gamma })\cap
C^{1}([0,T^{\star }];L^{2}(X))$. The appropriate choice of $T^{\star
},R^{\star }>0$ will be specified below. We first show that $\Phi $ maps $%
\mathbb{X}$ into $\mathbb{X}$. Indeed, let $u\in \mathbb{X}$. Then
\begin{align}
\Phi (u)^{\prime }(t)=& \sum_{n=1}^{\infty }u_{0,n}\lambda _{n}t^{\alpha
-1}E_{\alpha ,\alpha }(-\lambda _{n}t^{\alpha })\varphi
_{n}+\sum_{n=1}^{\infty }u_{1,n}E_{\alpha ,1}(-\lambda _{n}t^{\alpha
})\varphi _{n}  \label{A2} \\
& +\sum_{n=1}^{\infty }\left( \int_{0}^{t}f_{n}(u(\tau ))(t-\tau )^{\alpha
-2}E_{\alpha ,\alpha -1}(-\lambda _{n}(t-\tau )^{\alpha })\;d\tau \right)
\varphi _{n}.  \notag
\end{align}%
Next, by assumption (\textbf{Hf2}), for every $t\in \lbrack 0,T^{\star }]$,
\begin{equation}
\Vert f(u(\cdot,t))\Vert _{L^{2}(X)}\leq CQ_{2}\left( \Vert u(\cdot,t)\Vert
_{V_{\gamma }}\right) ,  \label{C1}
\end{equation}%
for some $C>0$. Proceeding as the proof of Theorem \ref{theo-weak} and using
the estimate \eqref{C1} we get that there is a constant $C>0$ such that for
every $t\in \lbrack 0,T^{\star }]$,
\begin{align}
\Vert \Phi (u)(t)\Vert _{V_{\gamma }}\leq & C\left( \Vert u_{0}\Vert
_{V_{\gamma }}+\Vert u_{1}\Vert _{L^{2}(X)}+t^{\alpha -1}\Vert f(u)\Vert
_{L^{\infty }((0,T);L^{2}(X))}\right)  \label{C2} \\
\leq & C\left( \Vert u_{0}\Vert _{V_{\gamma }}+\Vert u_{1}\Vert
_{L^{2}(X)}+(T^{\star })^{\alpha -1}Q_{2}\left( \left\Vert u\right\Vert
_{C\left( \left[ 0,T^{\ast }\right] ;V_{\gamma }\right) }\right) \right)
\notag \\
\leq & C\left( \Vert u_{0}\Vert _{V_{\gamma }}+\Vert u_{1}\Vert
_{L^{2}(X)}+(T^{\star })^{\alpha -1}Q_{2}\left( R^{\star }\right) \right) .
\notag
\end{align}%
Thus $\Phi (u)\in C([0,T^{\star }];V_{\gamma })$ where we have also used the
fact that the series in \eqref{A1} converges in $V_{\gamma }$ uniformly for $%
t\in \lbrack 0,T^{\star }]$. Similarly, we have that there is a constant $%
C>0 $ such that for every $t\in \lbrack 0,T^{\star }]$,
\begin{align}
\Vert \Phi (u)^{\prime }(t)\Vert _{L^{2}(X)}\leq & C\left( \Vert u_{0}\Vert
_{V_{\gamma }}+\Vert u_{1}\Vert _{L^{2}(X)}+(T^{\star })^{\alpha
-1}Q_{2}\left( \left\Vert u\right\Vert _{C\left( \left[ 0,T^{\ast }\right]
;V_{\gamma }\right) }\right) \right)  \label{C3} \\
\leq & C\left( \Vert u_{0}\Vert _{V_{\gamma }}+\Vert u_{1}\Vert
_{L^{2}(X)}+(T^{\star })^{\alpha -1}Q_{2}\left( R^{\star }\right) \right) .
\notag
\end{align}%
Since the series in \eqref{A2} converges in $L^{2}(X)$ uniformly for every $%
t\in \lbrack 0,T^{\star }]$, we also have that $\Phi (u)\in
C^{1}([0,T^{\star }];L^{2}(X))$. It also follows from \eqref{C2} and %
\eqref{C3} that
\begin{equation*}
\Vert \Phi (u)(t)\Vert _{V_{\gamma }}+\Vert \Phi (u)^{\prime }(t)\Vert
_{L^{2}(X)}\leq C\left( \Vert u_{0}\Vert _{V_{\gamma }}+\Vert u_{1}\Vert
_{L^{2}(X)}+(T^{\star })^{\alpha -1}Q_{2}(R^{\star })\right) .
\end{equation*}%
Letting
\begin{equation*}
R^{\star }\geq 2C\left( \Vert u_{0}\Vert _{V_{\gamma }}+\Vert u_{1}\Vert
_{L^{2}(X)}\right) ,
\end{equation*}%
we can find a sufficiently small time $T^{\star }>0$ such that
\begin{equation}
2C(T^{\star })^{\alpha -1}Q_{2}(R^{\star })\leq R^{\star },  \label{T1}
\end{equation}%
in which case, it follows that $\Phi (u)\in \mathbb{X}$ for all $u\in
\mathbb{X}$.

\textbf{Step 2}. Next, we show that by choosing a possibly smaller $T^{\star
}>0$, $\Phi :\mathbb{X}\rightarrow \mathbb{X}$ is a contraction. Indeed, let
$u,v\in \mathbb{X}$. Using the assumption (\textbf{Hf2}), the mean value
theorem, the H\"{o}lder inequality, we have that there is a constant $C>0$
(albeit possibly with a different value in each line) such that for every $%
t\in \lbrack 0,T^{\star }]$,
\begin{align}
& \Vert f(u(\cdot,t))-f(v(\cdot,t))\Vert _{L^{2}(X)}  \label{B1} \\
\leq & C\Vert u(\cdot,t)-v(\cdot,t)\Vert _{L^{\infty }(X)}\left( Q_{1}\left(
\Vert u\Vert _{C([0,T^{\star }];V_{\gamma })}\right) +Q_{1}\left( \Vert
v\Vert _{C([0,T^{\star }];V_{\gamma })}\right) \right)  \notag \\
\leq & C\Vert u-v\Vert _{C([0,T^{\star }];V_{\gamma })}Q_{1}(R^{\star }).
\notag
\end{align}%
It follows from \eqref{C2}, \eqref{C3} and \eqref{B1} that there is a
constant $C>0$ such that for every $t\in \lbrack 0,T^{\star }]$,
\begin{equation*}
\Vert \Phi (u)(t)-\Phi (v)(t)\Vert _{V_{\gamma }}+\Vert \Phi (u)^{\prime
}(t)-\Phi (v)^{\prime }(t)\Vert _{L^{2}(X)}\leq C(T^{\star })^{\alpha
-1}Q_{1}(R^{\star })\Vert u-v\Vert _{\mathbb{X}},
\end{equation*}%
and this implies that
\begin{equation*}
\Vert \Phi (u)-\Phi (v)\Vert _{\mathbb{X}}\leq C(T^{\star })^{\alpha
-1}Q_{1}(R^{\star })\Vert u-v\Vert _{\mathbb{X}}.
\end{equation*}%
Choosing $T^{\star }$ smaller than the one determined by \eqref{T1} such
that $C(T^{\star })^{\alpha -1}Q_{1}(R^{\star })<1,$ it follows that the
mapping $\Phi $ is a contraction on $\mathbb{X}$. Therefore, owing to the
contraction mapping principle, we can conclude that the mapping $\Phi $ has
a unique fixed point $u$ in $\mathbb{X}$.

\textbf{Step 3}. Finally we show that $u$ has the regularity specified in %
\eqref{regularity-nl} and also satisfies the variational identity. For the
regularity part, it remains to show that $\mathbb{D}_{t}^{\alpha }u\in
C([0,T^{\star }];V_{-\gamma })$. In fact, it follows from \eqref{fixed-point}
that
\begin{align}
\mathbb{D}_{t}^{\alpha }u(\cdot ,t)=& -\sum_{n=1}^{\infty }u_{0,n}\lambda
_{n}E_{\alpha ,1}(-\lambda _{n}t^{\alpha })\varphi _{n}-\sum_{n=1}^{\infty
}u_{1,n}\lambda _{n}tE_{\alpha ,2}(-\lambda _{n}t^{\alpha })\varphi _{n}
\label{dt-al-NL} \\
& -\sum_{n=1}^{\infty }\left( \int_{0}^{t}f_{n}(u(\tau ))\lambda _{n}(t-\tau
)^{\alpha -1}E_{\alpha ,\alpha }(-\lambda _{n}(t-\tau )^{\alpha })\;d\tau
\right) \varphi _{n}+f(u(\cdot,t))  \notag \\
 =&-Au\left( \cdot,t\right) +f\left( u(\cdot,t\right) ).  \notag
\end{align}%
Proceeding as in \eqref{D1} and \eqref{D2} we get that there is a constant $%
C>0$ such that for every $t\in \lbrack 0,T^{\star }]$,
\begin{equation}
\left\Vert \sum_{n=1}^{\infty }u_{0,n}\lambda _{n}E_{\alpha ,1}(-\lambda
_{n}t^{\alpha })\varphi _{n}\right\Vert _{V_{-\gamma }}\leq C\Vert
u_{0}\Vert _{V_{\gamma }},  \label{ccc1}
\end{equation}%
and
\begin{equation}
\left\Vert \sum_{n=1}^{\infty }u_{1,n}\lambda _{n}tE_{\alpha ,2}(-\lambda
_{n}t^{\alpha })\varphi _{n}\right\Vert _{V_{-\gamma }}\leq Ct^{2-\alpha
}\Vert u_{1}\Vert _{L^{2}(X)}.  \label{ccc2}
\end{equation}%
Similarly, using the assumptions on $f$, we have that for every $t\in
\lbrack 0,T^{\star }]$,
\begin{align*}
& \left\Vert \sum_{n=1}^{\infty }\left( \int_{0}^{t}f_{n}(u(\tau ))\lambda
_{n}(t-\tau )^{\alpha -1}E_{\alpha ,\alpha }(-\lambda _{n}(t-\tau )^{\alpha
})\;d\tau \right) \varphi _{n}\right\Vert _{V_{-\gamma }} \\
\leq & Ct\Vert f(u)\Vert _{L^{\infty }((0,T^{\star });L^{2}(X))}\leq
CtQ_{2}\left( \sup_{t\in \lbrack 0,T^{\star }]}\Vert u(\cdot,t)\Vert
_{V_{\gamma }}\right) .
\end{align*}%
Since the series in \eqref{dt-al-NL} converges in $V_{-\gamma }$ uniformly
in $[0,T^{\star }]$, we have shown that $\mathbb{D}_{t}^{\alpha }u\in
C([0,T^{\star }];V_{-\gamma })$ owing also to the fact that
\begin{equation}
f\left( u\right) \in C\left( \left[ 0,T^{\ast }\right] ;L^{2}\left( X\right)
\right) \hookrightarrow C\left( \left[ 0,T^{\ast }\right] ;V_{-\gamma
}\right) .  \label{reg-f}
\end{equation}%
Since $\mathbb{D}_{t}^{\alpha }u(\cdot ,t)\in V_{-\gamma },$ $Au(\cdot
,t)\in V_{-1/2}\hookrightarrow V_{-\gamma }$ and $f(u(\cdot ,t))\in L^{2}(X)$
for all $t\in (0,T^{\star })$, then taking the duality product in (\ref%
{dt-al-NL}) we immediately get the variational identity \eqref{Var-I}. It is
clear that $\Phi (u)(0)=u_{0}$ and that $\Phi (u)^{\prime }(0)=u_{1}$ and we
have shown \eqref{ini-nl-0}. The initial conditions are satisfied in the
sense of (\ref{ini-nl}) on account of the regularity property (\ref{reg-f})
and Step 5 of the proof of Theorem \ref{theo-weak}. We have shown that the
function $u$ given by \eqref{fixed-point} is the unique weak solution of %
\eqref{EQ-NL} on $(0,T^{\star })$. The proof is finished.
\end{proof}

\begin{proof}[\textbf{Proof of Theorem \protect\ref{extension}}]
Let
\begin{equation}  \label{S12}
S_{1}(t)u_{0}:=\sum_{n=1}^{\infty }u_{0,n}E_{\alpha ,1}(-\lambda
_{n}t^{\alpha })\varphi _{n}\;\mbox{ and }\;S_{2}(t)u_{1}:=\sum_{n=1}^{%
\infty }u_{1,n}tE_{\alpha ,2}(-\lambda _{n}t^{\alpha })\varphi _{n},
\end{equation}%
and
\begin{equation}  \label{S33}
S_{3}(t)f:=\sum_{n=1}^{\infty }\left( \int_{0}^{t}f_{n}(u(s))(t-s)^{\alpha
-1}E_{\alpha ,\alpha }(-\lambda _{n}(t-s)^{\alpha })\;ds\right) \varphi _{n}
\end{equation}
so that
\begin{equation*}
u(t)=S_{1}(t)u_{0}+S_{2}(t)u_{1}+S_{3}(t)f.
\end{equation*}%
Let also
\begin{equation}  \label{S12p}
S_{1}^{\prime }(t)u_{0}:=\sum_{n=1}^{\infty }u_{0,n}t^{\alpha -1}E_{\alpha
,\alpha }(-\lambda _{n}t^{\alpha })\varphi _{n}\;\mbox{
and }\;S_{2}^{\prime }(t)u_{1}:=\sum_{n=1}^{\infty }u_{1,n}tE_{\alpha
,1}(-\lambda _{n}t^{\alpha })\varphi _{n},
\end{equation}%
and
\begin{equation}  \label{S3p}
S_{3}^{\prime }(t)f:=\sum_{n=1}^{\infty }\left(
\int_{0}^{t}f_{n}(u(s))(t-s)^{\alpha -2}E_{\alpha ,\alpha -1}(-\lambda
_{n}(t-s)^{\alpha })\;ds\right) \varphi _{n}
\end{equation}%
so that
\begin{equation*}
\partial _{t}u(\cdot,t)=S_{1}^{\prime }(t)u_{0}+S_{2}^{\prime
}(t)u_{1}+S_{3}^{\prime }(t)f.
\end{equation*}%
Let $T^{\star }$ be the time from Theorem \ref{theo-loc}. Fix $\tau >0$ and
consider the space
\begin{align*}
\mathbb{K} :=&\Big\{v\in C([0,T^{\star }+\tau ];V_{\gamma })\cap
C^{1}([0,T^{\star }+\tau ];L^{2}(X))\text{:} \\
&v(\cdot,t) =u(\cdot,t)\;\qquad\forall \;t\in \lbrack 0,T^{\star }], \\
&\Vert v(\cdot,t)-u(\cdot,T^{\star })\Vert _{V_{\gamma }}+\Vert
\partial_tv(\cdot,t)-\partial_tu(\cdot,T^\star)\Vert _{L^{2}(X)}\leq
R,\;\;\forall \;t\in \lbrack T^{\star },T^{\star }+\tau ]\Big\}.
\end{align*}%
Define the mapping $\Phi $ on $\mathbb{K}$ by
\begin{align}
\Phi (v)(t)=& \sum_{n=1}^{\infty }u_{0,n}E_{\alpha ,1}(-\lambda
_{n}t^{\alpha })\varphi _{n}+\sum_{n=1}^{\infty }u_{1,n}tE_{\alpha
,2}(-\lambda _{n}t^{\alpha })\varphi _{n}  \label{AA1} \\
& +\sum_{n=1}^{\infty }\left( \int_{0}^{t}f_{n}(v(s))(t-s)^{\alpha
-1}E_{\alpha ,\alpha }(-\lambda _{n}(t-s)^{\alpha })\;ds\right) \varphi _{n}.
\notag
\end{align}%
Note that $\mathbb{K}$ when endowed with the norm of $C([0,T^{\star }+\tau
];V_{\gamma })\cap C^{1}([0,T^{\star }+\tau ];L^{2}(X))$ is a closed
subspace of $C([0,T^{\star }+\tau ];V_{\gamma })\cap C^{1}([0,T^{\star
}+\tau ];L^{2}(X))$. We show that $\Phi $ has a fixed point in $\mathbb{K}$.

\textbf{Step 1}. Since $f$ is continuously differentiable, we have that the
mapping $t\mapsto \Phi (v)(t)$ is continuously differentiable on $%
[0,T^{\star }+\tau ]$. We will show that by properly choosing $\tau ,R>0$, $%
\Phi :\mathbb{K}\rightarrow \mathbb{K}$ is a contraction mapping with
respect to the metric induced by the norm of $C([0,T^{\star }+\tau
];V_{\gamma })\cap C^{1}([0,T^{\star }+\tau ];L^{2}(X))$. The appropriate
choice of $\tau ,R>0$ will be specified below. First, We show that $\Phi $
maps $\mathbb{K}$ into $\mathbb{K}$. Indeed, let $v\in \mathbb{K}$.

\begin{itemize}
\item If $t\in [0,T^\star]$, then $v(\cdot,t)=u(\cdot,t)$. Hence $%
\Phi(v)(t)=\Phi(u)(t)=u(\cdot,t)$ and there is nothing to prove.

\item If $t\in [T^\star,T^\star+\tau]$, then
\end{itemize}

\begin{align*}
& \Vert \Phi (v)(t)-u(\cdot,T^{\star })\Vert _{V_{\gamma }} \\
\leq & \Vert S_{1}(t)u_{0}-S_{1}(T^{\star })u_{0}\Vert _{V_{\gamma }}+\Vert
S_{2}(t)u_{1}-S_{2}(T^{\star })u_{1}\Vert _{V_{\gamma }} \\
& +\left\Vert \sum_{n=1}^{\infty }\left(
\int_{0}^{t}f_{n}(v(s))(t-s)^{\alpha -1}E_{\alpha ,\alpha }(-\lambda
_{n}(t-s)^{\alpha })\;ds\right) \varphi _{n}\right. \\
& \left. -\sum_{n=1}^{\infty }\left( \int_{0}^{T^{\star
}}f_{n}(u(s))(T^{\star }-s)^{\alpha -1}E_{\alpha ,\alpha }(-\lambda
_{n}(T^{\star }-s)^{\alpha })\;ds\right) \varphi _{n}\right\Vert _{V_{\gamma
}} \\
\leq & \Vert S_{1}(t)u_{0}-S_{1}(T^{\star })u_{0}\Vert _{V_{\gamma }}+\Vert
S_{2}(t)u_{1}-S_{2}(T^{\star })u_{1}\Vert _{V_{\gamma }} \\
& +\int_{T^{\star }}^{t}\left\Vert \sum_{n=1}^{\infty }(t-s)^{\alpha
-1}E_{\alpha ,\alpha }(-\lambda _{n}(t-s)^{\alpha
})f_{n}(v(s))\varphi_n\right\Vert _{V_{\gamma }}ds \\
& +\int_{0}^{T^{\star }}\left\Vert \sum_{n=1}^{\infty }\left[ (t-s)^{\alpha
-1}-(T^{\star }-s)^{\alpha -1}\right] E_{\alpha ,\alpha }(-\lambda
_{n}(t-s)^{\alpha })f_{n}(u(s))\varphi_n\right\Vert _{V_{\gamma }}ds \\
& +\int_{0}^{T^{\star }}\left\Vert \sum_{n=1}^{\infty }(T^{\star
}-s)^{\alpha -1}\left[ E_{\alpha ,\alpha }(-\lambda _{n}(t-s)^{\alpha
})-E_{\alpha ,\alpha }(-\lambda _{n}(T^{\star }-s)^{\alpha })\right]
f_{n}(u(s))\varphi_n\right\Vert _{V_{\gamma }}ds \\
=& \mathcal{N}_{1}+\mathcal{N}_{2}+\mathcal{N}_{3}+\mathcal{N}_{4}.
\end{align*}
Since for every $T\geq 0$, the mappings $t\mapsto S_{1}(t)u_{0}$ and $%
t\mapsto S_{2}(t)u_{1}$ belong to $C([0,T],V_{\gamma })$, we can choose $%
\tau >0$ small such that for $t\in \lbrack T^{\star },T^{\star }+\tau ]$, we
have
\begin{equation}
\mathcal{N}_{1}:=\Vert S_{1}(t)u_{0}-S_{1}(T^{\star })u_{0}\Vert _{V_{\gamma
}}+\Vert S_{2}(t)u_{1}-S_{2}(T^{\star })u_{1}\Vert _{V_{\gamma }}\leq \frac{R%
}{4}.  \label{N1}
\end{equation}%
Proceeding as the proof of Theorem \ref{theo-loc} we can choose $\tau >0$
small such that for $t\in \lbrack T^{\star },T^{\star }+\tau ]$, we have
\begin{align}
\mathcal{N}_{2}:=& \int_{T^{\star }}^{t}\left\Vert \sum_{n=1}^{\infty
}(t-s)^{\alpha -1}E_{\alpha ,\alpha }(-\lambda _{n}(t-s)^{\alpha
})f_{n}(v(s))\varphi_n\right\Vert _{V_{\gamma }}ds  \label{N2} \\
\leq & C\tau ^{\alpha -1}Q_{2}\left( \Vert v(\cdot,t)\Vert _{V_{\gamma
}}\right) \leq 2C\tau ^{\alpha -1}Q_{2}(R^{\star })\leq \frac{R}{4}.  \notag
\end{align}%
For the third norm we have that
\begin{equation}
\mathcal{N}_{3}:=\int_{0}^{T^{\star }}\left\Vert \sum_{n=1}^{\infty }\left[
(t-s)^{\alpha -1}-(T^{\star }-s)^{\alpha -1}\right] E_{\alpha ,\alpha
}(-\lambda _{n}(t-s)^{\alpha })f_{n}(u(s))\varphi_n\right\Vert _{V_{\gamma
}}ds.  \label{N3}
\end{equation}%
Note that the series in \eqref{N3} converges in $V_{\gamma }$ uniformly for $%
t\in \lbrack T^{\star },T^{\star }+\tau ]$. Moreover,
\begin{equation*}
\left\Vert \sum_{n=1}^{\infty }\left[ (t-s)^{\alpha -1}-(T^{\star
}-s)^{\alpha -1}\right] E_{\alpha ,\alpha }(-\lambda _{n}(t-s)^{\alpha
})f_{n}(u(s))\varphi_n\right\Vert _{V_{\gamma }}\;\rightarrow 0\;\mbox{ as }%
\;t\rightarrow T^{\star },
\end{equation*}%
and there is a constant $C>0$ such that
\begin{align*}
& \left\Vert \sum_{n=1}^{\infty }\left[ (t-s)^{\alpha -1}-(T^{\star
}-s)^{\alpha -1}\right] E_{\alpha ,\alpha }(-\lambda _{n}(t-s)^{\alpha
})f_{n}(u(s))\varphi_n\right\Vert _{V_{\gamma }} \\
\leq & C(t-s)^{\alpha -2}\Vert f(u(\cdot,s))\Vert _{L^{2}(X)}\leq C(T^{\star
}-s)^{\alpha -2}\Vert f(u(\cdot,s))\Vert _{L^{2}(X)}.
\end{align*}%
Thus by the Lebesgue Dominated Convergence Theorem, we can choose $\tau >0$
small such that for $t\in \lbrack T^{\star },T^{\star }+\tau ]$,
\begin{equation}
\mathcal{N}_{3}=\int_{0}^{T^{\star }}\left\Vert \sum_{n=1}^{\infty }\left[
(t-s)^{\alpha -1}-(T^{\star }-s)^{\alpha -1}\right] E_{\alpha ,\alpha
}(-\lambda _{n}(t-s)^{\alpha })f_{n}(u(s))\varphi_n\right\Vert _{V_{\gamma
}}ds\leq \frac{R}{4}.  \label{N-3-2}
\end{equation}%
With the same argument as for $\mathcal{N}_{3}$, we can choose $\tau >0$
small such that for every $t\in \lbrack T^{\star },T^{\star }+\tau ]$ we have%
\begin{align}
\mathcal{N}_{4}:=&\int_{0}^{T^{\star }}\left\Vert \sum_{n=1}^{\infty
}(T^{\star }-s)^{\alpha -1}\left[ E_{\alpha ,\alpha }(-\lambda
_{n}(t-s)^{\alpha })-E_{\alpha ,\alpha }(-\lambda _{n}(T^{\star }-s)^{\alpha
})\right] f_{n}(u(s))\varphi_n\right\Vert _{V_{\gamma }}\;ds\notag\\
\leq& \frac{R}{8}.  \label{N4}
\end{align}

For the time derivative, proceeding as above, we also have that
\begin{align*}
& \Vert \Phi (v)^{\prime }(t)-\partial _{t}u(\cdot ,T^{\star })\Vert
_{L^{2}(X)} \\
\leq & \Vert S_{1}^{\prime }(t)u_{0}-S_{1}^{\prime }(T^{\star })u_{0}\Vert
_{V_{\gamma }}+\Vert S_{2}^{\prime }(t)u_{1}-S_{2}^{\prime }(T^{\star
})u_{1}\Vert _{L^{2}(X)} \\
& +\int_{T^{\star }}^{t}\left\Vert \sum_{n=1}^{\infty }(t-s)^{\alpha
-2}E_{\alpha ,\alpha -1}(-\lambda _{n}(t-s)^{\alpha })f_{n}(v(s))\varphi
_{n}\right\Vert _{L^{2}(X)}ds \\
& +\int_{0}^{T^{\star }}\left\Vert \sum_{n=1}^{\infty }\left[ (t-s)^{\alpha
-2}-(T^{\star }-s)^{\alpha -2}\right] E_{\alpha ,\alpha -2}(-\lambda
_{n}(t-s)^{\alpha })f_{n}(u(s))\varphi _{n}\right\Vert _{L^{2}(X)}ds \\
& +\int_{0}^{T^{\star }}\left\Vert \sum_{n=1}^{\infty }(T^{\star
}-s)^{\alpha -2}\left[ E_{\alpha ,\alpha -1}(-\lambda _{n}(t-s)^{\alpha
})-E_{\alpha ,\alpha -1}(-\lambda _{n}(T^{\star }-s)^{\alpha })\right]
f_{n}(u(s))\varphi _{n}\right\Vert _{V_{\gamma }}ds \\
=& \mathcal{M}_{1}+\mathcal{M}_{2}+\mathcal{M}_{3}+\mathcal{M}_{4}.
\end{align*}%
Using the same argument as the corresponding terms above, we can choose $%
\tau >0$ small such that for every $t\in \lbrack T^{\star },T^{\star }+\tau ]
$,
\begin{equation}
\mathcal{M}_{1}:=\Vert S_{1}^{\prime }(t)u_{0}-S_{1}^{\prime }(T^{\star
})u_{0}\Vert _{L^{2}(X)}+\Vert S_{2}^{\prime }(t)u_{1}-S_{2}^{\prime
}(T^{\star })u_{1}\Vert _{L^{2}(X)}\leq \frac{R}{8},  \label{M1}
\end{equation}%
and
\begin{equation}
\mathcal{M}_{2}:=\int_{T^{\star }}^{t}\left\Vert \sum_{n=1}^{\infty
}(t-s)^{\alpha -2}E_{\alpha ,\alpha -1}(-\lambda _{n}(t-s)^{\alpha
})f_{n}(v(s))\varphi _{n}\right\Vert _{L^{2}(X)}ds\leq \frac{R}{8},
\label{M2}
\end{equation}%
and
\begin{equation}
\mathcal{M}_{3}:=\int_{0}^{T^{\star }}\left\Vert \sum_{n=1}^{\infty }\left[
(t-s)^{\alpha -2}-(T^{\star }-s)^{\alpha -2}\right] E_{\alpha ,\alpha
-2}(-\lambda _{n}(t-s)^{\alpha })f_{n}(u(s))\varphi _{n}\right\Vert
_{L^{2}(X)}ds\leq \frac{R}{8},  \label{M3}
\end{equation}%
and
\begin{align}
\mathcal{M}_{4}:=&\int_{0}^{T^{\star }} \left\Vert \sum_{n=1}^{\infty
}(T^{\star }-s)^{\alpha -2}\left[ E_{\alpha ,\alpha -1}(-\lambda
_{n}(t-s)^{\alpha })-E_{\alpha ,\alpha -1}(-\lambda _{n}(T^{\star
}-s)^{\alpha })\right] f_{n}(u(s))\varphi _{n}\right\Vert _{V_{\gamma }}ds
\notag \\
\leq & \frac{R}{8}.  \label{M4}
\end{align}%
It follows from \eqref{N1}-\eqref{N4}, \eqref{M1}-\eqref{M4} that there
exists $\tau >0$ small such that for every $t\in \lbrack T^{\star },T^{\star
}+\tau ]$,
\begin{equation*}
\Vert \Phi (v)(t)-u(\cdot ,T^{\star })\Vert _{V_{\gamma }}+\Vert \Phi
(v)^{\prime }(t)-\partial _{t}u(\cdot ,T^{\star })\Vert _{L^{2}(X)}\leq R.
\end{equation*}%
We have shown that $\Phi $ maps $\mathbb{K}$ into $\mathbb{K}$.

\textbf{Step 2}. We show that $\Phi$ is a contraction on $\mathbb{K}$. Let $%
v,w\in\mathbb{K}$. Then
\begin{align*}
\Phi(v)(t)-\Phi(w)(t)=\sum_{n=1}^\infty\left(\int_0^t(
f_n(v(s))-f_n(w(s)))(t-s)^{\alpha-1}E_{\alpha,\alpha}(-\lambda_n(t-s)^%
\alpha)\;ds\right)\varphi_n.
\end{align*}

\begin{itemize}
\item If $t\in \lbrack 0,T^{\star }]$, then it follows from the proof of
Theorem \ref{theo-loc} that%
\begin{align*}
& \Vert \Phi (u)(t)-\Phi (v)(t)\Vert _{V_{\gamma }}+\Vert \Phi (u)^{\prime
}(t)-\Phi (v)^{\prime }(t)\Vert _{L^{2}(X)} \\
& \leq C(T^{\star })^{\alpha -1}Q_{1}(R^{\star })\Vert u-v\Vert
_{C([0,T^{\star }];V_{\gamma })}.
\end{align*}

\item If $t\in \lbrack T^{\star },T^{\star }+\tau ]$, then proceeding as in %
\eqref{B1} and \eqref{N2} we get that there is a constant $C>0$ such that%
\begin{align}
& \Vert \Phi (v)(t)-\Phi (w)(t)\Vert _{V_{\gamma }}  \label{EE1} \\
=& \left\Vert \sum_{n=1}^{\infty }\left( \int_{T^{\star
}}^{t}(f_{n}(v(s))-f_{n}(w(s)))(t-s)^{\alpha -1}E_{\alpha ,\alpha }(-\lambda
_{n}(t-s)^{\alpha })\;ds\right) \varphi _{n}\right\Vert _{V_{\gamma }}
\notag \\
\leq & C\tau ^{\alpha -1}\Vert f(v)-f(w)\Vert _{L^{\infty }((T^{\ast
},t);L^{2}(X))}  \notag \\
\leq & C\tau ^{\alpha -1}Q_{1}\left( R\right) \Vert v-w\Vert _{C([T^{\star
},T^{\star }+\tau ];V_{\gamma })}.  \notag
\end{align}%
In a similar way we have that there is a constant $C>0$ such that
\begin{align}
& \Vert \Phi (v)^{\prime }(t)-\Phi (w)^{\prime }(t)\Vert _{L^{2}(X)}
\label{EE2} \\
=& \left\Vert \sum_{n=1}^{\infty }\left( \int_{T^{\star
}}^{t}(f_{n}(v(s))-f_{n}(w(s)))(t-s)^{\alpha -2}E_{\alpha ,\alpha
-1}(-\lambda _{n}(t-s)^{\alpha })\;ds\right) \varphi _{n}\right\Vert
_{L^{2}(X)}  \notag \\
\leq & C\tau ^{\alpha -1}\Vert f(v)-f(w)\Vert _{L^{\infty }((T^\star,t
);L^{2}(X))}  \notag \\
\leq & C\tau ^{\alpha -1}Q_{1}\left( R\right) \Vert v-w\Vert _{C([T^{\star
},T^{\star }+\tau ];V_{\gamma })}.  \notag
\end{align}%
It follows from \eqref{EE1} and \eqref{EE2} that there is a constant $C>0$ such
that
\begin{equation*}
\Vert \Phi (v)-\Phi (w)\Vert _{\mathbb{K}}\leq C\tau ^{\alpha -1}Q_{1}\left(
R\right) \Vert v-w\Vert _{\mathbb{K}}.
\end{equation*}%
Then choosing $\tau >0$ even smaller again (if necessary) so that $C\tau
^{\alpha -1}Q_{1}\left( R\right) <1$, we deduce once again that $\Phi $ is a
contraction on $\mathbb{K}$. Hence, $\Phi $ has a unique fixed point $v$ on $%
\mathbb{K}$.
\end{itemize}

\textbf{Step 3} We show that the function $v$ given by the right hand side
of \eqref{AA1} has the regularity specified in \eqref{regularity-nl}. In
fact we need to show that $\mathbb{D}_{t}^{\alpha }v\in C([0,T^{\star }+\tau
];V_{-\gamma })$. The proof follows the lines of the corresponding result in
the proof of Theorem \ref{theo-loc}. The proof is finished.
\end{proof}

To complete the proof of Theorem \ref{theo-glob-sol} we need the following
lemma.

\begin{lemma}
\label{lem-34} Let $T\in (0,\infty )$ and $u:X\times [0,T)\rightarrow
L^{2}(X)$ be such that $u(x,\cdot)$ is continuously differentiable for a.e. $%
x\in X$ with
\begin{equation}
\sup_{t\in \lbrack 0,T)}\left( \Vert u(\cdot ,t)\Vert _{V_{\gamma }}+\Vert
\partial_tu(\cdot ,t)\Vert _{L^{2}(X)}\right) <\infty .  \label{cond-u}
\end{equation}%
Let
\begin{align*}
\mathbb{E}_k(t):=t^{\alpha-1}E_{\alpha,\alpha}(-\lambda_kt^\alpha)\;%
\mbox{
and }\; \mathbb{E}_k^\prime(t):=t^{\alpha-2}E_{\alpha,\alpha}(-\lambda_kt^%
\alpha).
\end{align*}
Let $t_{n}\in \lbrack 0,T)$ be a sequence such that $\lim_{n\rightarrow
\infty }t_{n}=T$. Then
\begin{equation}
\lim_{n\rightarrow \infty }\int_{0}^{t_{n}}\left\Vert\sum_{k=1}^\infty\left[%
\mathbb{E}_k(t-\tau)-\mathbb{E}_k(T-\tau)\right]f_k(u(\tau))\varphi_k\right%
\Vert _{V{\gamma }}\,d\tau=0,  \label{331}
\end{equation}%
and
\begin{equation}
\lim_{n\rightarrow \infty }\int_{0}^{t_{n}}\left\Vert\sum_{k=1}^\infty\left[%
\mathbb{E}_k^\prime(t-\tau)-\mathbb{E}_k^\prime(T-\tau)\right]%
f_k(u(\tau))\varphi_k\right\Vert _{L^2(X)}\;d\tau=0.  \label{332}
\end{equation}
\end{lemma}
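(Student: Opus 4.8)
The plan is to establish \eqref{331} and \eqref{332} together, since both reduce to the same split dominated-convergence argument on the $\tau$-integral. First I would record two uniform ingredients. Since we are in \textbf{Case (i)} with $\alpha\in(1,\alpha_0)$, the embedding $V_\gamma\hookrightarrow L^\infty(X)$ of \eqref{embed-supnorm} holds; combined with (\textbf{Hf2}) and the hypothesis \eqref{cond-u} this gives a bound $\|f(u(\cdot,\tau))\|_{L^2(X)}\le CQ_2(R)=:M$, uniform in $\tau\in[0,T)$, where $R$ denotes the supremum in \eqref{cond-u}; in particular $\sum_k|f_k(u(\tau))|^2\le M^2$, with $f_k(u(\tau))=(f(u(\cdot,\tau)),\varphi_k)$ as before. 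Secondly, applying \eqref{Est-MLF} to $z=-\lambda_k s^\alpha$ (for which $|\arg z|=\pi$) gives $|E_{\alpha,\alpha}(-\lambda_k s^\alpha)|\le C$ uniformly in $k\ge1$ and $s>0$, whence $|\mathbb{E}_k'(s)|\le Cs^{\alpha-2}$; and \eqref{IN-L1}, applied with its $\lambda$-exponent equal to $\gamma=1/\alpha$ and its $t$-exponent equal to $\alpha-1\in(0,\alpha)$ (so that the resulting power is $(\alpha-1)-\alpha\gamma=\alpha-2$), gives $\lambda_k^{\gamma}|\mathbb{E}_k(s)|\le Cs^{\alpha-2}$, again uniformly in $k\ge1$, $s>0$.

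Next I would expand the $V_\gamma$-norm in \eqref{331} in the orthonormal basis $\{\varphi_k\}$, so that the integrand equals
\[
g_n(\tau):=\left(\sum_k \lambda_k^{2\gamma}\,|f_k(u(\tau))|^2\,\bigl|\mathbb{E}_k(t_n-\tau)-\mathbb{E}_k(T-\tau)\bigr|^2\right)^{1/2}.
\]
For fixed $\tau\in[0,T)$ one has $t_n-\tau\to T-\tau>0$, so for $n$ large each $k$-th summand is dominated by $C|f_k(u(\tau))|^2(T-\tau)^{2\alpha-4}$, which is summable in $k$ by the first ingredient; since each $\mathbb{E}_k$ is continuous at $T-\tau$, dominated convergence in $k$ gives $g_n(\tau)\to0$ for every $\tau\in[0,T)$. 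On the other hand, the Cauchy--Schwarz inequality in $k$ together with the kernel bounds yields the crude estimate $g_n(\tau)\le CM\bigl[(t_n-\tau)^{\alpha-2}+(T-\tau)^{\alpha-2}\bigr]$ for $0\le\tau<t_n$.

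To finish, I would fix $\delta>0$ and, for $n$ so large that $t_n>T-\delta/2$, split $\int_0^{t_n}g_n\,d\tau=\int_0^{T-\delta}g_n\,d\tau+\int_{T-\delta}^{t_n}g_n\,d\tau$. On $[0,T-\delta]$ both $t_n-\tau$ and $T-\tau$ are bounded below by $\delta/2$, so there $g_n$ is dominated by a constant independent of $n$; by the pointwise decay above and dominated convergence the first integral tends to $0$. For the second, since $\alpha-2>-1$ the singular powers are integrable and a direct computation bounds it by $\tfrac{2CM}{\alpha-1}\delta^{\alpha-1}$. Hence $\limsup_n\int_0^{t_n}g_n\,d\tau\le\tfrac{2CM}{\alpha-1}\delta^{\alpha-1}$, and letting $\delta\to0^+$ proves \eqref{331}. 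The proof of \eqref{332} is verbatim the same, with $V_\gamma$ replaced by $L^2(X)$ (so the factors $\lambda_k^{2\gamma}$ drop out of the series) and $\mathbb{E}_k$ replaced by $\mathbb{E}_k'$, using $|\mathbb{E}_k'(s)|\le Cs^{\alpha-2}$ from the first ingredient; the resulting crude estimate and the same splitting then apply word for word. The single delicate point — and the reason a plain single application of dominated convergence fails — is the behaviour of the kernels on the thin strip $\{\tau\approx t_n\}$, where $(t_n-\tau)^{\alpha-2}$ blows up and cannot be majorised by an $n$-independent $L^1$ function; isolating $[T-\delta,t_n]$ and estimating its contribution directly is exactly what circumvents this.
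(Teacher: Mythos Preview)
Your argument is correct and follows essentially the same strategy as the paper: bound $\|f(u(\cdot,\tau))\|_{L^2(X)}$ uniformly, use the kernel estimates $\lambda_k^\gamma|\mathbb E_k(s)|,\,|\mathbb E_k'(s)|\le Cs^{\alpha-2}$, split the $\tau$-integral into a bulk part (where dominated convergence applies) and a short tail near $T$ (estimated directly via the integrable singularity $s^{\alpha-2}$), and then let the tail length tend to zero. The only cosmetic difference is that the paper obtains the pointwise convergence of the integrand by writing the difference $\mathbb E_k(t_n-\tau)-\mathbb E_k(T-\tau)$ via the derivative identity \eqref{3} and bounding the resulting integral, whereas you invoke continuity of $\mathbb E_k$ at $T-\tau$ together with dominated convergence in $k$; both routes are valid and lead to the same conclusion.
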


\begin{proof}
Recall that $\gamma =\frac{1}{\alpha }$. Let us prove the first
claim \eqref{331}. Set 
\[K:=\sup_{s\in \lbrack 0,T)}\Vert f(u(\cdot ,s))\Vert
_{L^{2}(X)}<\infty.\]
 Given $\epsilon >0$, fix $\delta \in (0,T)$ such that
\begin{equation*}
\frac{CK}{\alpha }(T-\delta )^{\alpha }<\frac{\epsilon }{4}.
\end{equation*}
Using \eqref{3} and \eqref{Est-MLF} we get that there is a constant $C>0$ such that for every $%
0<T<t$,
\begin{align*}
&\Big|\lambda_k^\gamma\Big[(t-\tau)^{\alpha-1}E_{\alpha,\alpha}(-%
\lambda_k(t-\tau)^\alpha)
-(T-\tau)^{\alpha-1}E_{\alpha,\alpha}(-\lambda_k(T-\tau)^\alpha) \big]\Big|
\\
=&\left|\int_{T+\tau}^{t+\tau}\lambda_k^\gamma
s^{\alpha-2}E_{\alpha,\alpha-1}(-\lambda_k s^\alpha)\;ds\right| \\
\le &C\int_{T+\tau}^{t+\tau}s^{\alpha-2}\lambda_k^{\gamma-1}s^{-\alpha}\;ds\le C
\int_{T+\tau}^{t+\tau}s^{-2}\;ds=C\left(\frac{1}{T+\tau)}-\frac{1}{t+\tau}\right).
\end{align*}
This estimate implies that
\begin{align}  \label{MW1}
\left\Vert\sum_{k=1}^\infty\left[\mathbb{E}%
_k(t-\tau) -\mathbb{E}_k(T-\tau)\right]f_k(u(\tau))\varphi_k%
\right\Vert _{V_{\gamma }} \le CK\left(\frac{1}{T+\tau}-\frac{1}{t+\tau}\right).
\end{align}
Thus
\begin{align*}
\lim_{t\to T^+}\left\Vert\sum_{k=1}^\infty\left[\mathbb{E}_k(t-\tau) -%
\mathbb{E}_k(T-\tau) \right]f_k(u(\tau))\varphi_k\right\Vert _{V_{\gamma
}}=0.
\end{align*}
It also follows from \eqref{MW1} that
\begin{align*}
\left\Vert\sum_{k=1}^\infty\left[\mathbb{E}_k(t-\tau) -\mathbb{E}_k(T-\tau) %
\right]f_k(u(\tau))\varphi_k\right\Vert _{V_{\gamma }}\le CK\frac{1}{T},
\end{align*}
and the right hand-side belongs to $L^1((0,t))$. Therefore, by the Lebesgue
Dominated Convergence Theorem, we can choose $N\in \mathbb{N}$ such that $%
t_{n}>\delta $ and
\begin{align*}
\int_0^\delta \left\Vert\sum_{k=1}^\infty\left[\mathbb{E}_k(t-\tau) -\mathbb{%
E}_k(T-\tau) \right]f_k(u(\tau))\varphi_k\right\Vert _{V_{\gamma }}\,d\tau<%
\frac{\epsilon }{2},
\end{align*}%
for all $n\geq N$. Therefore, for all $n\geq N, $
\begin{align*}
& \int_{0}^{t_{n}}\left\Vert\sum_{k=1}^\infty\left[\mathbb{E}_k(t-\tau)-%
\mathbb{E}_k(T-\tau)\right]f_k(u(\tau))\varphi_k\right\Vert _{V{\gamma }%
}\,d\tau \\
& \leq \int_{0}^{\delta }\left\Vert\sum_{k=1}^\infty\left[\mathbb{E}%
_k(t-\tau)-\mathbb{E}_k(T-\tau)\right]f_k(u(\tau))\varphi_k\right\Vert _{V{%
\gamma }}\,d\tau \\
& +C\int_{\delta }^{t_{n}}\left[ (t_{n}-\tau)^{\alpha -1}+(T-\tau)^{\alpha
-1}\right]\Vert f(u(\cdot,\tau))\Vert _{L^{2}(X)} \,d\tau \\
%& \leq \frac{\epsilon }{2}+2CK\int_{\delta }^{t_{n}}(t_{n}-\tau)^{\alpha
%-1}\,d\tau\leq \frac{\epsilon }{2}+\frac{2CK}{\alpha }(t_{n}-\tau)^{\alpha}\bigg|_{t_{n}}^{\delta } \\
& \le \frac{\epsilon }{2}+\frac{CK}{\alpha }\left((t_{n}-\delta )^{\alpha }+(T-\delta)^\alpha\right)<
\frac{\epsilon }{2}+\frac{2CK}{\alpha }(T-\delta )^{\alpha -1}<\frac{%
\epsilon }{2}+\frac{\epsilon }{2}=\epsilon ,
\end{align*}%
and we have shown \eqref{331}.

Analogously, given $\epsilon >0$, fix $\delta \in (0,T)$ such that
\begin{equation*}
\frac{CK}{\alpha -1}(T-\delta )^{\alpha -1}<\frac{\epsilon }{4}.
\end{equation*}%
Proceeding as above, we can choose $N_{0}\in \mathbb{N}$ such that $%
t_{n}>\delta $ and
\begin{equation*}
\int_{0}^{\delta }\left\Vert\sum_{k=1}^\infty\left[\mathbb{E}%
_k^\prime(t-\tau)-\mathbb{E}_k^\prime(T-\tau)\right]f_k(u(\tau))\varphi_k%
\right\Vert _{L^2(X)}\;d\tau<\frac{\epsilon }{2},
\end{equation*}%
for all $n\geq N_{0}$. Using this estimate, we get that for all $n\geq
N_{0}, $
\begin{align*}
& \int_{0}^{t_{n}}\left\Vert\sum_{k=1}^\infty\left[\mathbb{E}%
_k^\prime(t-\tau)-\mathbb{E}_k^\prime(T-\tau)\right]f_k(u(\tau))\varphi_k%
\right\Vert _{L^2(X)}\;d\tau \\
& \leq \int_{0}^{\delta }\left\Vert\sum_{k=1}^\infty\left[\mathbb{E}%
_k^\prime(t-\tau)-\mathbb{E}_k^\prime(T-\tau)\right]f_k(u(\tau))\varphi_k%
\right\Vert _{L^2(X)}\;d\tau \\
& +C\int_{\delta }^{t_{n}}\left[ (t_{n}-\tau)^{\alpha -2}+(T-\tau)^{\alpha
-2}\right]\Vert f(u(\cdot,\tau))\Vert _{L^{2}(X)} \,d\tau \\
%& \leq \frac{\epsilon }{2}+2CK\int_{\delta }^{t_{n}}(t_{n}-\tau)^{\alpha
%%-2}\,d\tau\leq \frac{\epsilon }{2}+\frac{2CK}{\alpha -1}(t_{n}-\tau)^{\alpha-1}\bigg|_{t_{n}}^{\delta } \\
& \le\frac{\epsilon }{2}+\frac{CK}{\alpha -1}\left((t_{n}-\delta )^{\alpha -1}+(T-\delta)^{\alpha-1}\right)<
\frac{\epsilon }{2}+\frac{2CK}{\alpha -1}(T-\delta )^{\alpha -1}<\frac{%
\epsilon }{2}+\frac{\epsilon }{2}=\epsilon .
\end{align*}%
We have shown \eqref{332} and the proof is finished.
\end{proof}

Now we are ready to give the proof of our last main result.

\begin{proof}[\textbf{Proof of Theorem \protect\ref{theo-glob-sol}}]
Let
\begin{equation*}
\mathcal{T}:=\Big\{T\in \lbrack 0,\infty ):\;\exists \;u:X\times [0,T]\rightarrow
L^{2}(X)\;\mbox{
unique local solution to \eqref{EQ-NL} in }\;(0,T)\Big\},
\end{equation*}%
and set $T_{\max }:=\sup \mathcal{T}$. Then we have a continuously
differentiable function (in the second variable) $u:X\times[0,T_{\max
})\rightarrow L^{2}(X)$ which is the local solution of \eqref{EQ-NL} on $%
[0,T_{\max })$. If $T_{\max }=\infty $, then $u$ is a global solution. Now
if $T_{\max }<\infty $ we shall show that we have \eqref{t-maximal}. Assume
that there exists $K_{0}<\infty $ such that
\begin{equation}
\Vert u(\cdot ,t)\Vert _{V_{\gamma }}+\Vert \partial _{t}u(\cdot ,t)\Vert
_{L^{2}(X)}\leq K_{0},\;\;\forall \;t\in \lbrack 0,T_{\max }).  \label{AS-bd}
\end{equation}%
Let $(t_{n})_{n\in\NN}\subset \lbrack 0,T_{\max })$ be a sequence that converges to $%
T_{\max }$. Let $t_{n}>t_{m}$ and
\begin{equation*}
K:=\sup_{t\in \lbrack 0,T_{\max })}\Vert f(u(\cdot,t))\Vert _{L^{2}(X)}<\infty.
\end{equation*}%
Then using the assumption \eqref{AS-bd}, we get from Lemma \ref{lem-34} that
\begin{align*}
&\left\Vert \int_{t_{m}}^{t_{n}}\sum_{k=1}^\infty\mathbb{E}_{k}(T_{\max
}-s)f_k(u(s))\varphi_k\;ds\right\Vert _{V_{\gamma }} \\
\leq& \int_{t_{m}}^{t_{n}}\sum_{k=1}^\infty\Big\Vert \mathbb{E}_{k}(T_{\max
}-s)f_k(u(s))\varphi_k\Big\Vert _{V_{\gamma }}\;ds \\
 \leq& C\int_{t_{m}}^{t_{n}}(T_{\max }-s)^{\alpha -1}\Vert f(u(s))\Vert
_{L^{2}(X)}ds \\
 \leq &CK\int_{t_{m}}^{t_{n}}(T_{\max }-s)^{\alpha -1}ds \\
=&\frac{CK}{\alpha }\left[ (T_{\max }-t_{n})^{\alpha }-(T_{\max
}-t_{m})^{\alpha }\right] \rightarrow 0,
\end{align*}%
$\mbox{as }n,m\rightarrow \infty .$

We use the notations of $S_j$ and $S_j^\prime$ ($j=1,2,3$) given in %
\eqref{S12}-\eqref{S3p}. Then
\begin{align*}
&\Vert u(\cdot,t_{n})-u(\cdot,t_{m})\Vert _{V_{\gamma }} \\
\leq &\Vert S_{1}(t_{n})u_{0}-S_{1}(t_{m})u_{0}\Vert _{V_{\gamma }}+\Vert
S_{2}(t_{n})u_{1}-S_{2}(t_{m})u_{1}\Vert _{V_{\gamma }}
+\|S_3(t_n)f-S_3(t_m)f\|_{V_\gamma} \\
\leq& \Vert S_{1}(t_{n})u_{0}-S_{1}(t_{m})u_{0}\Vert _{V_{\gamma }}+\Vert
S_{2}(t_{n})u_{1}-S_{2}(t_{m})u_{1}\Vert _{V_{\gamma }} \\
& +\left\Vert \int_{0}^{t_{n}}\sum_{k=1}^\infty[\mathbb{E}_{k}(t_{n}-s)-%
\mathbb{E} _{k}(T_{\max }-s)]f_k(u(s))\varphi_k\;ds\right\Vert _{V_{\gamma }}
\\
& +\left\Vert \int_{0}^{t_{m}}\sum_{k=1}^\infty[\mathbb{E}_{k}(t_{m}-s)-%
\mathbb{E} _{k}(T_{\max }-s)]f_k(u(s))\varphi_k\;ds\right\Vert _{V_{\gamma }}
\\
& +\left\Vert \int_{t_{m}}^{t_{n}}\sum_{k=1}^\infty\mathbb{E}_{k}(T_{\max
}-s)f_k(u(s))\varphi_k\;ds\right\Vert _{V_{\gamma }}\rightarrow 0\quad %
\mbox{as }n,m\rightarrow \infty ,
\end{align*}%
where we have used Lemma \ref{lem-34}. Analogously, for $t_{n}>t_{m}$ we
have that
\begin{align*}
&\left\Vert \int_{t_{m}}^{t_{n}}\sum_{k=1}^\infty\mathbb{E}_{k}^{\prime
}(T_{\max }-s)f_k(u(s))\varphi_k\;ds\right\Vert _{L^{2}(X)} \\
\leq& \int_{t_{m}}^{t_{n}}\sum_{k=1}^\infty\Vert \mathbb{E}_{k}^{\prime
}(T_{\max }-s)f_k(u(s))\varphi_k\Vert _{L^{2}(X)}ds \\
\leq& C\int_{t_{m}}^{t_{n}}(T_{\max }-s)^{\alpha -2}\Vert f(u(s))\Vert
_{L^{2}(X)}ds \\
\leq& CK\int_{t_{m}}^{t_{n}}(T_{\max }-s)^{\alpha -2}ds \\
=&\frac{CK}{\alpha -1}\left[ (T_{\max }-t_{n})^{\alpha -1}-(T_{\max
}-t_{m})^{\alpha -1}\right] \rightarrow 0,
\end{align*}%
$\mbox{as }n,m\rightarrow \infty $. Thus, by Lemma \ref{lem-34} again we
obtain that
\begin{align*}
&\Vert \partial_tu(\cdot,t_{n})-\partial_tu(\cdot,t_{m})\Vert _{L^{2}(X)} \\
\leq &\Vert {S}_{1}^{\prime }(t_{n})u_{0}-{S}_{1}^{\prime }(t_{m})u_{0}\Vert
_{L^{2}(X)}+\Vert {S}_{2}^\prime(t_{n})u_{1}-{S}_{2}^\prime(t_{m})u_{1}\Vert
_{L^{2}(X)}+\|S_3^\prime(t_n)f-S_3^\prime(t_m)f\|_{L^2(X)} \\
\le &\Vert {S}_{1}^{\prime }(t_{n})u_{0}-{S}_{1}^{\prime }(t_{m})u_{0}\Vert
_{L^{2}(X)}+\Vert {S}_{2}^\prime(t_{n})u_{1}-{S}_{2}^\prime(t_{m})u_{1}\Vert
_{L^{2}(X)} \\
& +\left\Vert \int_{0}^{t_{n}}\sum_{k=1}^\infty[\mathbb{E}_{k}^{\prime
}(t_{n}-s)- \mathbb{E}_{k}^{\prime }(T_{\max }-s)]f_k(u(s))\varphi_k\;ds%
\right\Vert _{L^{2}(X)} \\
& +\left\Vert \int_{0}^{t_{m}}\sum_{k=1}^\infty[\mathbb{E}_{k}^{\prime
}(t_{m}-s)- \mathbb{E}_{k}^{\prime }(T_{\max }-s)]f_k(u(s))\varphi_k\;ds%
\right\Vert _{L^{2}(X)} \\
& +\left\Vert \int_{t_{m}}^{t_{n}}\sum_{k=1}^\infty\mathbb{E}_{k}^{\prime
}(T_{\max }-s)f_k(u(s))\varphi_k\;ds\right\Vert _{L^{2}(X)}\rightarrow 0,
\end{align*}%
as $n,m\to \infty $. It follows that $(u(\cdot,t_{n}))_{n\in{\mathbb{N}}}$
and $(\partial_tu(\cdot,t_{n}))_{n\in{\mathbb{N}}}$ are Cauchy sequences and
therefore have limits $u_{T_{\max }}\left(\cdot, t\right)$ and $%
\partial_tu_{T_{\max }}\left( \cdot,t\right)$ such that $u_{T_{\max
}}\left(\cdot, t\right) \in V_{\gamma }$ and $\partial_tu_{T_{\max }}\left(
\cdot,t\right) \in L^{2}(X)$. Then, we can extend $u$ over $[0,T_{\max }]$
to obtain the equality
\begin{equation*}
u(\cdot,t)={S}_{1}(t)u_{0}+{S}_{2}(t)u_{1}+{S}_{3}(t)f,
\end{equation*}%
for all $t\in \lbrack 0,T_{\max }]$. By Theorem \ref{extension} we can
extend the solution to some larger interval. This is a contradiction with
the definition of $T_{\max }>0$. The proof is finished.
\end{proof}

\section{Proofs in the case (i) when $2>\protect\alpha \ge\protect\alpha_{0}$ and case (ii)}

\label{sec-proof-mr2}

In this section we briefly discuss the proofs of the results stated in
Section \ref{main} in the super-critical case $\alpha _{0}\leq \alpha <2$
(for \textbf{Case (i)}) and in the \textbf{Case (ii)}, respectively.

\begin{proof}[\textbf{Proof of Theorem \protect\ref{theo-loc}}]
Fix $0<T^{\star }\leq T$. Let
\begin{equation*}
\mathbb{Y}:=C([0,T^{\star }];V_{\gamma })\cap C^{1}([0,T^{\star
}];L^{2}(X))\cap L^{rq}\left( (0,T^\star);L^{2r}\left( X\right) \right)
\end{equation*}%
and consider the space
\begin{equation*}
\mathbb{Y}_{T^{\ast }}=\Big\{ u\in \mathbb{Y}:\;u(\cdot
,0)=u_{0},\;\partial _{t}u(\cdot ,0)=u_{1}\mbox{ and }\Vert u\Vert _{\mathbb{%
Y}_{T^{\ast }}}\leq R^{\star }\Big\} ,
\end{equation*}%
for some $R^{\star }>0$, with norm%
\begin{equation*}
\Vert u\Vert _{\mathbb{Y}_{T^{\ast }}}:=\sup_{t\in \lbrack 0,T^{\star
}]}\Big( \Vert u(\cdot ,t)\Vert _{V_{\gamma }}+\Vert \partial _{t}u(\cdot
,t)\Vert _{L^{2}(X)}\Big) +\left\Vert u\right\Vert _{L^{rq}\left(
(0,T^{\ast });L^{2r}\left( X\right) \right) }.
\end{equation*}%
Next, define the same mapping $\Phi $ on $\mathbb{Y}_{T^{\ast }}$ by %
\eqref{A1}. Note that $\mathbb{Y}_{T^{\ast }}$ when endowed with the
previous norm is a closed subspace of the Banach space $\mathbb{Y}$. We
prove the existence of a locally defined solution of \eqref{EQ-NL} by a
fixed point argument. Furthermore, we recall that $\Phi (u)^{\prime }(t)$
can be also defined as the mapping from \eqref{A2} since $f$ is $C^{1}$.

\textbf{Step 1.} As in the proof in the previous sections, we must first
check that $\Phi $ maps $\mathbb{Y}_{T^{\ast }}$ into $\mathbb{Y}_{T^{\ast
}} $. To this end, by assumption (\textbf{Hf1}), for every $t\in \lbrack
0,T^{\star }]$, one can check for every $u,v\in \mathbb{Y}_{T^{\ast }}$ that%
\begin{align}
& \Vert f(u)-f\left( v\right) \Vert _{L^{q}((0,T^\star);L^{2}(X))}  \label{C1bis}
\\
& \leq \left\Vert u-v\right\Vert _{L^{rq}((0,T^\star);L^{2r}\left( X\right)
)}\left( \left\Vert u\right\Vert _{L^{rq}((0,T^\star);L^{2r}\left( X\right)
)}^{r-1}+\left\Vert v\right\Vert _{L^{rq}((0,T^\star);L^{2r}\left( X\right)
)}^{r-1}\right)  \notag
\end{align}%
as well as%
\begin{equation}
\Vert f(u)\Vert _{L^{q}((0,T^\star);L^{2}(X))}\leq \left\Vert u\right\Vert
_{L^{rq}((0,T^\star);L^{2r}\left( X\right) )}^{r}.  \label{C1tris}
\end{equation}%
We can proceed as in the proof of Theorem \ref{theo-weak}. Instead,
exploiting estimate (\ref{C1tris}), we can find a constant $C>0$ such that
for every $t\in \lbrack 0,T^{\star }]$,
\begin{align}
\Vert \Phi (u)(t)\Vert _{V_{\gamma }}\leq & C\left( \Vert u_{0}\Vert
_{V_{\gamma }}+\Vert u_{1}\Vert _{L^{2}(X)}+t^{\frac 1p+ \alpha -2 }\Vert
f(u)\Vert _{L^{q}((0,T^\star);L^{2}(X))}\right)  \label{C2bis} \\
\leq & C\left( \Vert u_{0}\Vert _{V_{\gamma }}+\Vert u_{1}\Vert
_{L^{2}(X)}+(T^{\star })^{\frac 1p+ \alpha -2 }\left\Vert u\right\Vert
_{L^{rq}((0,T^\star);L^{2r}\left( X\right) )}^{r}\right)  \notag \\
\leq & C\left( \Vert u_{0}\Vert _{V_{\gamma }}+\Vert u_{1}\Vert
_{L^{2}(X)}+(T^{\star })^{\frac 1p+ \alpha -2 }\left( R^{\star }\right)
^{r}\right) ,  \notag
\end{align}%
since $1+p\left( \alpha -2\right) >0$. Thus $\Phi (u)\in C([0,T^{\star
}];V_{\gamma })$ where we have also used the fact that the series in %
\eqref{A1} converges in $V_{\gamma }$ uniformly for $t\in \lbrack 0,T^{\star
}]$. Similarly, we have that there is a constant $C>0$ such that for every $%
t\in \lbrack 0,T^{\star }]$,
\begin{equation}
\Vert \Phi (u)^{\prime }(t)\Vert _{L^{2}(X)}\leq C\left( \Vert u_{0}\Vert
_{V_{\gamma }}+\Vert u_{1}\Vert _{L^{2}(X)}+(T^{\star })^{\frac 1p+ \alpha
-2 }\left( R^{\star }\right) ^{r}\right) .  \label{C3bis}
\end{equation}%
Since the series in \eqref{A2} converges in $L^{2}(X)$ uniformly for every $%
t\in \lbrack 0,T^{\star }]$, we also have that $\Phi (u)\in
C^{1}([0,T^{\star }];L^{2}(X))$. It also follows from \eqref{C2bis} and %
\eqref{C3bis} that
\begin{equation}
\Vert \Phi (u)(t)\Vert _{V_{\gamma }}+\Vert \Phi (u)^{\prime }(t)\Vert
_{L^{2}(X)}\leq C\left( \Vert u_{0}\Vert _{V_{\gamma }}+\Vert u_{1}\Vert
_{L^{2}(X)}+(T^{\star })^{\frac 1p+ \alpha -2 }\left( R^{\star }\right)
^{r}\right) .  \label{C3tris}
\end{equation}%
Finally, since $V_{\gamma }\hookrightarrow L^{2r}\left( X\right) $ it holds%
\begin{equation}
\left\Vert u\right\Vert _{L^{rq}((0,T^{\ast });L^{2r}\left( X\right) )}\leq
(T^\star)^{\frac{1}{rq}}\left\Vert u\right\Vert _{C([0,T^{\ast }];V_{\gamma })}
\label{C3quad}
\end{equation}%
and therefore by (\ref{C3tris}), it follows that%
\begin{align*}
\left\Vert \Phi (u)\right\Vert _{\mathbb{Y}_{T^{\ast }}} \leq &C\left( \Vert
u_{0}\Vert _{V_{\gamma }}+\Vert u_{1}\Vert _{L^{2}(X)}+(T^{\star })^{\frac
1p+ \alpha -2 }\left( R^{\star }\right) ^{r}\right) \\
& +C\left( T^{\ast }\right) ^{\frac{1}{rq}}\left( \Vert u_{0}\Vert
_{V_{\gamma }}+\Vert u_{1}\Vert _{L^{2}(X)}\right) +C(T^{\star })^{\frac 1p+
\alpha -2+\frac{1}{rq}}\left( R^{\star }\right) ^{r}.
\end{align*}%
Letting now
\begin{equation*}
R^{\star }\geq 2C\left( \Vert u_{0}\Vert _{V_{\gamma }}+\Vert u_{1}\Vert
_{L^{2}(X)}\right) ,
\end{equation*}%
we can find a sufficiently small time $T^{\star }>0$ such that
\begin{equation}
C\left( T^{\ast }\right) ^{\frac{1}{rq}}\left( \Vert u_{0}\Vert _{V_{\gamma
}}+\Vert u_{1}\Vert _{L^{2}(X)}\right) +C(T^{\star })^{\frac 1p+ \alpha -2 +%
\frac{1}{rq}}\left( R^{\star }\right) ^{r}\leq \frac{R^{\star }}{2},
\label{T1bis}
\end{equation}%
in which case it follows that $\Phi (u)\in \mathbb{Y}_{T^{\ast }}$ for all $%
u\in \mathbb{Y}_{T^{\ast }}$.

\textbf{Step 2}. Next, we show that by choosing a possibly smaller $T^{\star
}>0$, $\Phi :\mathbb{Y}_{T^{\ast }}\rightarrow \mathbb{Y}_{T^{\ast }}$ is a
contraction. Similarly to the foregoing estimates, we can exploit (\ref%
{C1bis}) such that for every $t\in \lbrack 0,T^{\star }]$,%
\begin{equation*}
\Vert \Phi (u)\left( t\right) -\Phi (v)\left( t\right) \Vert _{V_{\gamma
}}+\Vert \Phi (u)^{\prime }(t)-\Phi (v)^{\prime }(t)\Vert _{L^{2}(X)}\leq
(T^{\star })^{\frac 1p+ \alpha -2 }\left( R^{\star }\right) ^{r}\Vert
u-v\Vert _{\mathbb{Y}_{T^{\ast }}},
\end{equation*}%
as well as%
\begin{equation*}
\left\Vert \Phi (u)-\Phi (v)\right\Vert _{L^{rq}((0,T^{\ast });L^{2r}\left(
X\right) )}\leq T^{\frac{1}{rq}}\left\Vert \Phi (u)-\Phi (v)\right\Vert
_{C([0,T^{\ast }];V_{\gamma })}.
\end{equation*}%
Choosing $T^{\ast }\leq 1$ smaller than the one determined by (\ref{T1bis})
such that $(T^{\star })^{1+p\left( \alpha -2\right) }\left( R^{\star
}\right) ^{r}<1$ it follows that the mapping $\Phi $ is a contraction on $%
\mathbb{Y}_{T^{\ast }}$. Therefore, owing to the contraction mapping
principle, we can conclude that the mapping $\Phi $ has a unique fixed point
$u$ in $\mathbb{Y}_{T^{\ast }}$.

\textbf{Step 3}. Finally we show that $u$ has the regularity specified in %
\eqref{regularity-nl2} and also satisfies the variational identity. For the
regularity part, it remains to show that
\begin{equation*}
\mathbb{D}_{t}^{\alpha }u =\theta _{1}+\theta_{2},
\end{equation*}%
with $\theta _{1}\in C([0,T^{\star }];V_{-\gamma })$ and $\theta _{2}\in
L^{q}\left( \left( 0,T^{\ast }\right) ;L^{2}\left( X\right) \right) $. As
before, by \eqref{fixed-point} we have
\begin{align}
\mathbb{D}_{t}^{\alpha }u(\cdot ,t)=& -\sum_{n=1}^{\infty }u_{0,n}\lambda
_{n}E_{\alpha ,1}(-\lambda _{n}t^{\alpha })\varphi _{n}-\sum_{n=1}^{\infty
}u_{1,n}\lambda _{n}tE_{\alpha ,2}(-\lambda _{n}t^{\alpha })\varphi _{n}
\label{dt-al-NLbis} \\
& -\sum_{n=1}^{\infty }\left( \int_{0}^{t}f_{n}(u(\tau ))\lambda _{n}(t-\tau
)^{\alpha -1}E_{\alpha ,\alpha }(-\lambda _{n}(t-\tau )^{\alpha })\;d\tau
\right) \varphi _{n}+f(u(\cdot,t))  \notag \\
& =-Au\left(\cdot, t\right) +f\left( u(\cdot,t\right) ).  \notag
\end{align}%
Proceeding as in \eqref{D1} and \eqref{D2} we deduce the estimates %
\eqref{ccc1} and \eqref{ccc2} for the first two summands in %
\eqref{dt-al-NLbis}. For the third summand, we exploit \eqref{D3} to easily
conclude that
\begin{align}
& \left\Vert \sum_{n=1}^{\infty }\left( \int_{0}^{t}f_{n}(u(\tau ))\lambda
_{n}(t-\tau )^{\alpha -1}E_{\alpha ,\alpha }(-\lambda _{n}(t-\tau )^{\alpha
})\;d\tau \right) \varphi _{n}\right\Vert _{V_{-\gamma }}  \label{C4} \\
\leq & Ct^{\frac 1p}\Vert f\left( u\right) \Vert _{L^{q}((0,T^{\ast
});L^{2}(X))}\leq t^{\frac 1p}\Vert u\Vert _{L^{rq}((0,T^{\ast
});L^{2}(X))}^{r},  \notag
\end{align}%
for all $t\in \left[ 0,T^{\ast }\right] $, owing also to (\ref{C1tris}).
Since all series in \eqref{dt-al-NLbis} converge in $V_{-\gamma }$ uniformly
in $[0,T^{\star }]$, we can let $\theta _{1}$ to be the sum of the first
three summands in (\ref{dt-al-NLbis}) and observe that $\theta _{1}\in
C([0,T^{\star }];V_{-\gamma })$; setting
\begin{equation}
\theta _{2}=f\left( u\right) \in L^{q}\left( \left( 0,T^{\ast }\right)
;L^{2}\left( X\right) \right)\hookrightarrow L^{q}\left( \left( 0,T^{\ast
}\right) ;V_{-\gamma }\right) ,  \label{reg-fbis}
\end{equation}%
we immediately deduce the claim about the regularity of $\mathbb{D}%
_{t}^{\alpha }u.$ Finally, since $\mathbb{D}_{t}^{\alpha }u(\cdot,t)\in
V_{-\gamma },$ $Au(\cdot,t)\in V_{-1/2}\subset V_{-\gamma }$ and $%
f(u(\cdot,t))\in L^{2}(X)$ for a.e. $t\in (0,T^{\star })$, then taking the
duality product in (\ref{dt-al-NLbis}) we immediately get the variational
identity \eqref{Var-I}. The initial conditions are satisfied in the sense of
(\ref{ini-nl}) on account of the regularity property (\ref{reg-fbis}) and
Step 5 of the proof of Theorem \ref{theo-weak}. We have shown that the
function $u$ given by \eqref{fixed-point} is the unique weak solution of %
\eqref{EQ-NL} on $(0,T^{\star })$. The proof is finished.
\end{proof}

\begin{proof}[\textbf{Proofs of Theorem \protect\ref{extension} and Theorem }%
\protect\ref{theo-glob-sol}]
One argues almost verbatim (with some minor modifications) as in the proofs
provided in Section \ref{sec-proof-mr}. Indeed, let $T^{\star }>0$ be the
time from the preceding proof. Fix $\tau >0$ and consider the space
\begin{equation*}
\mathbb{K}:=\Big\{v\in \mathbb{Y}_{T^{\ast }+\tau
}:v(\cdot,t)=u(\cdot,t)\;\forall \;t\in \lbrack 0,T^{\star }],\text{ }%
\left\Vert v-u\right\Vert _{\mathbb{Z}_{T^{\ast }}}\leq R\Big\},
\end{equation*}%
where%
\begin{align}
\left\Vert v-u\right\Vert _{\mathbb{Z}_{T^{\ast }}} :=&\sup_{T^\star\le t\le
T^\star+\tau}\Big[\Vert v(\cdot,t)-u(\cdot,T^{\star })\Vert _{V_{\gamma }}+\Vert
\partial_tv(\cdot,t)-\partial _{t}u(\cdot,T^{\star })\Vert _{L^{2}(X)} \Big]  \label{zzz}
\\
& +\left\Vert v-u\right\Vert _{L^{rq}\left( \left( T^{\ast };T^{\ast }+\tau
\right) ;L^{2r}\left( X\right) \right) },  \notag
\end{align}%
for $t\in \lbrack T^{\star },T^{\star }+\tau ]$. With the same mapping $\Phi
$ as in (\ref{A1}), and arguing in a similar fashion as in the preceding
proof by taking advantage of the basic estimates (\ref{C1bis})-(\ref{C1tris}%
), we can show once again that $\Phi :\mathbb{K}\rightarrow \mathbb{K}$ is a
contraction mapping with respect to the metric induced by (\ref{zzz}). In
addition, it follows that%
\begin{equation*}
\mathbb{D}_{t}^{\alpha }v=\upsilon _{1}+\upsilon _{2}\in C([0,T^{\star
}+\tau ];V_{-\gamma })\oplus L^{q}\left( \left( 0,T^{\ast }+\tau \right)
;L^{2r}\left( X\right) \right) .
\end{equation*}%
For the proof of Theorem \ref{theo-glob-sol}, one may argue as in the
subcritical case $1<\alpha <\alpha _{0}$ (see Section \ref{sec-proof-mr})
with some (albeit) minor modifications. In particular, one updates the value
$K>0$ from the proof of the crucial Lemma \ref{lem-34} to%
\begin{equation*}
K:=\Vert f(u)\Vert _{L^{q}((0,T_{\max });L^{2}(X))}<\infty .
\end{equation*}%
We leave the obvious details to the interested reader.
\end{proof}

\bibliographystyle{plain}
\bibliography{biblio}

\end{document}